\documentclass{article}

\usepackage[utf8]{inputenc}
\usepackage{amsmath, amssymb, amsthm, bm}
\usepackage{graphicx, booktabs, subfigure}
\usepackage{hyperref}
\usepackage{tikz}
\usepackage{tkz-euclide}
\usetikzlibrary{chains, scopes, positioning, backgrounds, shapes, fit, shadows, calc}
\usepackage{multirow} 
\usepackage{booktabs}
\usepgflibrary{shapes.geometric}
\usepackage{float}

\usepackage{txfonts, cite}

\numberwithin{equation}{section}
\newtheorem{thm}{Theorem}[section]
\newtheorem{remark}{Remark}[section]
\newtheorem{lem}{Lemma}[section]
\newtheorem{definition}{Definition}[section]
\theoremstyle{definition}
\newtheorem{example}{\noindent Example}
\graphicspath{{Figures/}}

\begin{document}
\title{Simultaneous recovery of an obstacle and its excitation sources from near-field scattering data}

\author{
	Yan Chang\footnote{School of Mathematics, Harbin Institute of Technology, Harbin, P. R. China. {\it 21B312002@stu.hit.edu.cn} } \ and
	Yukun Guo\footnote{School of Mathematics, Harbin Institute of Technology, Harbin, P. R. China. {\it ykguo@hit.edu.cn} (Corresponding author)}
	}
	\date{}
	\maketitle
	
\begin{abstract}
This paper is concerned with the inverse problem of determining an obstacle and the corresponding incident point sources in the Helmholtz equation from near-field scattering data. An optimization method is proposed to simultaneously recover both the obstacle and source locations. Moreover, a two-step sampling scheme with novel indicator functions is proposed to produce a good initial guess for solving the optimization problem. Theoretically, we analyze the convergence properties of the optimization method and the indicating behaviors of the indicator functions. Several numerical examples are presented to show the effectiveness of the proposed method.
\end{abstract}
	
{\textbf{Keywords:} inverse scattering, inverse source problem, near field, optimization method, direct sampling,  reverse time migration
}

\section{Introduction}
Inverse problems for wave equations arise widely in non-destructive testing \cite{Bao1, Arridge}, underwater acoustics \cite{Baum, LPH}, antenna synthesis \cite{Devaney, Ramm} and extensive other application areas. Being two prototypical models in the field of inverse problems, the time-harmonic inverse obstacle scattering and inverse source problem have received significant investigations in the literature. For the nonlinear inverse obstacle scattering problem, the emanating source (incident wave) is given and the inhomogeneous scatterer serves as the target. Reciprocally, in the linear inverse source problem, the source is unknown while the passive scatterering inhomogeneity is vacant. Classical algorithms for inverse obstacle scattering problems involve the optimization methods \cite{Colton, Guo1, Kirsch3}, the singular source method \cite{Potthast} and the recursive linearization algorithm \cite{Bao}. In addition, there are some  sampling-type methods with qualitative features, for instance, the factorization method \cite{Kirsh2}, the linear and direct sampling methods \cite{LLZ08, LXD}, the single-shot schemes \cite{LJZ1, LJZ2} and the reverse time migration approach \cite{Chen, Chen1}. Recently, a variety of reconstruction methods have also been developed to deal with the inverse source problems. Bao et al. \cite{Bao2} applied a recursive algorithm to recover unknown sources of acoustic field with multi-frequency measurement data. Zhang et al. \cite{Zhang} used the direct sampling method to locate multipolar acoustic sources for the Helmholtz equation from near-field Cauchy data. Bousba et al. \cite{Bousba} utilized the direct sampling method to identify point sources for the Helmholtz equation from far-field data. The algebraic method has also been applied to the Helmholtz equation for determining monopolar sources \cite{Badia}. 
	
In this paper, we are concerned with an inverse problem to simultaneously reconstruct the acoustic obstacle and its excitation source points from the near-field measurements. This co-inversion problem can be mathematically formulated as an intrinsically coupled problem comprising the aforementioned inverse obstacle scattering and inverse source problem. In comparison with the vast studies on the conventional inverse obstacle/source problems, focuses on identification of both unknown source and scatterer are relatively rare. Over recent years, some interesting attempts have been made in dealing with such co-inversion problems. In \cite{LHY3}, Liu et al. established sufficient conditions to recover both the sound speed of the medium and the source in thermo- and photo-acoustic tomography. The simultaneous recovery of an embedded obstacle and its surrounding medium by formally-determined scattering data was studied in \cite{LHY1}. In \cite{Hu}, the authors proved the uniqueness in simultaneously recovering the source functions and the obstacle for the wave equation. An adjoint state method for recovering both the source and attenuation in the attenuated X-ray transform was proposed in \cite{Luo}. In addition, we also refer to \cite{LLM19, LLM21} for some recent studies on the determination of both potential and source in a random Schr\"{o}dinger equation.
	 
In contrast to the standard problem of reconstructing either the source or obstacle solely, it is substantially more challenging to identify these concurrently unknown targets. It is worthwhile noting that, in the scenario of co-reconstruction, only the total field is available and the contributions from inaccessible source and scatterer are superimposed in the total field measurements. Hence, decoupling these {\it incident} and {\it scattered} components would inevitably suffer from inherent difficulties. As a result, the usual nonlinearity, ill-posedness as well as the additional lack of information constitute the mathematical and numerical challenges of our current study.
	
In this work, we propose an optimization method to recover an obstacle and its illuminating point sources simultaneously from the measured total field. The salient features of our method can be summarized as follows. First, the proposed method is capable of simultaneously and quantitatively determining both the obstacle and source points since the reconstruction does not rely on any alternating iteration between the source and scatterer. Second, by the idea of the decomposition method and reformulating the inverse problem as a nonlinear optimization problem, the solution process for the direct problem is unnecessary. Third, by designing proper indicator functions, we provide an easy-to-implement and effective strategy for choosing the initial guesses for the optimization method, which avoids the a prior information of the unknown targets. Finally, the proposed method is insensitive to the noise-contaminated data and works well even with limited-aperture measurement data despite the severe ill-posedness.
		
We now introduce the precise formulation of the co-inversion problem under consideration.  Let $D\subset \mathbb{R}^2$ be an open and simply connected domain with $C^2$ boundary $\partial D.$  Given the source point located at $z\in\mathbb{R}^2\backslash\overline{D},$ the incident field $u^i$ due to $z$ is given by
	\begin{equation}\label{eq:fundamental_solution}
		u^i(x;z)=\Phi(x,z)=\frac{\mathrm{i}}{4}H_0^{(1)}(k|x-z|),\quad x\in\mathbb{R}^2\backslash\left(\overline{D}\cup\{z\}\right),
	\end{equation}
	where $k>0$ is the wave number,  $H_0^{(1)}$ is the Hankel function of the first kind of order zero. Then the forward scattering problem for the sound-soft obstacle $D$ is to find the scattered field $u^s\in H_{\rm loc}^1(\mathbb{R}^2\backslash\overline{D})$ such that
	\begin{align}
		\label{eq:Helmholtz} \Delta u^s+k^2 u^s  &\, =0\quad{\rm in }\  \mathbb{R}^2\backslash\overline{D},\\
		\label{eq:Dirichlet}  u &\,=  0\quad{\rm on}\ \partial D,\\
		\label{eq:Sommerfeld}  \lim\limits_{r:=|x|\to\infty}\sqrt{r}&\left(\frac{\partial u^s}{\partial r} -\mathrm{i}k u^s\right)=0,
	\end{align}
	where $u=u^i+u^s$ is the total field. The Sommerfeld radiation condition \eqref{eq:Sommerfeld} holds uniformly with respect to all directions $\hat{x}=x/|x|\in\mathbb{S}:=\{x\in\mathbb{R}^2:|x|=1\}$, and \eqref{eq:fundamental_solution} is also known as the fundamental solution to the Helmholtz equation \eqref{eq:Helmholtz}. It is well known (see, e.g., \cite{Colton1}) that the direct problem \eqref{eq:Helmholtz}-\eqref{eq:Sommerfeld} admits a unique solution $u^s\in H^1_{\rm loc}\left(\mathbb{R}^2\backslash\overline{D}\right)$.
		
	In this paper, we assume that $D\subset B_R:=\{x\in\mathbb{R}^2:|x|<R\}$ and $B_R\backslash\overline{D}$ is connected.
	Let $S:=\cup_{j=1}^N\{z_j\}\subset B_R\backslash\overline{D}$ be a set of distinct source points with $N\in\mathbb{N}_+$ the number of source points. 	
	Denote by $u^s(x; z_j)$ the scattered field corresponding to the incident field $u^i(x; z_j)$ of the form \eqref{eq:fundamental_solution}, and we collect the total field  $u(x; z_j)=u^i(x;z_j)+u^s(x;z_j)$ on the measurement curve $\Gamma_R:=\partial B_R=\{x\in\mathbb{R}^2: |x|=R\}$. Define the measured total field data by
		\[
		\mathbb{U}:=\{u(x; z):x\in \Gamma_R, z\in S\},
		\]
	then the inverse problem under consideration is to determine the obstacle-source pair $(\partial D, S)$
	from the measurements $\mathbb{U},$ namely,
	\begin{equation}\label{inverse_problem}
		\mathbb{U}\to(\partial D, S).
	\end{equation}
	
	We refer to Figure \ref{fig:Setup} for an illustration of the geometry setting of the inverse obstacle and source problem \eqref{inverse_problem}.
	
		\begin{figure}
			\centering
			\begin{tikzpicture}[thick]
				\draw [blue] circle (3.2cm); 
				\draw node at (2.5, 2.5) {$\Gamma_R$};
				
				\pgfmathsetseed{3}	 
				\clip (0, 0) circle (3cm);
				\foreach \p in {1,...,20}
				{\fill [red] (3*rand, 3*rand) circle (0.06);
				}
			
				\pgfmathsetseed{8}			
				\draw plot [smooth cycle, samples=6, domain={1:6}] (\x*360/7+2*rnd:0.1cm+2cm*rnd) node at (0, -.5) {$D$}[fill=lightgray];
				
				\draw node at (1.5, 1.5) {$B_R$};
		   \end{tikzpicture}
			\caption{Illustration of the co-inversion for imaging the obstacle and source points. }\label{fig:Setup}
		\end{figure}
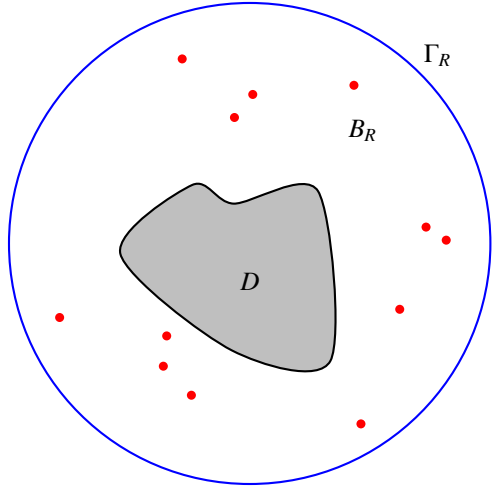
	
	The rest of this paper is organized as follows: In Section \ref{sec:optimization}, an optimization method is proposed to reconstruct the source-obstacle pair and the corresponding convergence theory is established. In Section \ref{sec:DSM}, an effective  strategy based on the sampling-type method is provided to select the initial guess for the optimization method.  Numerical experiments are given in Section \ref{sec:examples} to verify the performance of the method. Finally, some conclusions are given in Section \ref{sec:conclusions}.
		
	\section{Optimization method}\label{sec:optimization}
	
	Based on the decomposition method for inverse obstacle scattering problem \cite{Kirsch3}, we propose a novel optimization method in this section by incorporating the source locations as new variables in the cost functional in order to reconstruct the obstacle-source pair $(\partial D, S)$. The basic idea of the optimization method is to seek for the positions of source points and the boundary of obstacle as the locations where the Dirichlet boundary condition holds, namely, the total field $u^i+u^s$ vanishes. We would like to emphasize that, since the admissible source locations and obstacles are radically distinct, the new optimization problem will thereby encounter nontrivial difficulties in the analysis and implementation. 
		
	\subsection{Optimization method for the co-inversion problem}
	In this subsection, we present the optimization method for the inverse obstacle-source problem. According to the initial guess by the sampling results (see the next section for more details) about the obstacle-source pair $(\partial D,S),$ we can place a closed $C^2$ curve $\Lambda$ inside $D$ such that $k^2$ is not a Dirichlet eigenvalue for the negative Laplacian in the interior of $\Lambda$. Choosing some proper admissible set $U\times V $ of the obstacle-source pair $(\Gamma,z')$ such that $\Gamma\in U,$ $z'=(z'_1,z'_2,\cdots,z'_N)\in V,$ the scattered field $u^s(x;z'_j)$ corresponding to $z'_j$ can be represented by a single-layer potential
		\begin{align}\label{eq:single}
			u^s(x;z'_j)=\int_\Lambda\Phi(x,y)\varphi_j(y)\mathrm{d}s(y),
		\end{align}
		with the unknown density $\varphi_j\in L^2(\Lambda).$ Given the measured total field $u(x;z_j),$ the density $\varphi_j$ are supposed to satisfy the integral equation of the first kind
		\begin{align}\label{eq:density}
			(\mathcal{S}\varphi_j)(x)=u(x;z_j)-u^i(x;z'_j), \ \ x\in\Gamma_R,
		\end{align}
		where $u^i(x;z'_j)$ is the incident wave due to source point $z'_j$ and the compact integral operator $\mathcal{S}:L^2(\Lambda)\to L^2(\Gamma_R)$ is defined by
		\[
		(\mathcal{S}\varphi)(x):=\int_\Lambda\Phi(x,y)\varphi(y)\mathrm{d}s(y),\quad x\in\Gamma_R.
		\]
		The fact that the kernel of $\mathcal{S}$ is analytic leads to the severe ill-posedness of \eqref{eq:density}, which motivates us to seek for a stable numerical solution by adopting the Tikhonov regularization strategy to replace \eqref{eq:density} by
		\begin{align}\label{eq:reg_density}
			\alpha\varphi_j^\alpha + \mathcal{S}^*\mathcal{S}\varphi_j^\alpha=\mathcal{S}^*\Big(u(\cdot;z_j)-u^i(\cdot;z'_j)\Big),\quad{\rm on }\ \ \Gamma_R,
		\end{align}
		where $\alpha>0$ is a regularization parameter and the adjoint operator $\mathcal{S}^*:L^2(\Gamma_R)\to L^2(\Lambda)$ of $\mathcal{S}$ is given by
		\begin{align*}
			(\mathcal{S}^*g)(y):=\int_{\Gamma_R}\overline{\Phi(x,y)}g(x)\mathrm{d}s(x),\ \ y\in\Lambda,
		\end{align*}
		where the overline signifies the complex conjugate.
		
		Given the approximate scattered field $u^s_\alpha(x;z'_j)$ by inserting a solution $\varphi_j^\alpha$ of \eqref{eq:reg_density} into the single-layer potential representation \eqref{eq:single}, the obstacle-source pair $(\partial D,z)$ is then sought for by minimizing the defect
		\begin{align}\label{eq:defect}
			\sum_{j=1}^N\left\|u^i(x;z'_j)+u^s_\alpha(x;z'_j)\right\|_{L^2(\Gamma)}
		\end{align}
		over some suitable class $U\times V$ of admissible obstacle-source pairs $(\Gamma,z').$
		In \eqref{eq:defect}, $u^i(x;z'_j)$ is the incident field due to the source point $z'_j.$
		
		For the minimization problem, the admissible class $U$ of $\Gamma$ is chosen to be a compact set (with respect to the $C^{1,\beta}$ norm, $0<\beta<1$) of all star-like closed $C^2$ curves, described by
		\begin{align}\label{eq:Gamma}
			\Gamma=\{r(\hat{x})\hat{x}:\hat{x}\in\mathbb{S}\},
		\end{align}
		where $r\in C^2(\mathbb{S})$ is the radial function satisfying the prior information
		$$0<a\le r(\hat{x})\le b,\ \ \hat{x}\in\mathbb{S},$$
		with given constants $a$ and $b.$ The admissible class $V$ of $z'=(z'_1,z'_2,\cdots,z'_N)$ is chosen to be a compact set (with respect to the Euclidean norm in $\mathbb{R}^2$) of all $N$ ordered points such that $z'_j\in B_R\backslash \overline{D_\Gamma}, j=1,\cdots,N$, where $D_\Gamma$ is the domain enclosed by $\Gamma$ such that $\Gamma=\partial D_\Gamma$.
		
		\begin{remark}
			It deserves noting that, the purpose for choosing the admissible curve to be star-like is to provide convenience for the subsequent analysis and calculation, while the exact boundary curve $\partial D$ does not necessarily require to be star-like. In fact, even if $\partial D$ is not star-like, the obstacle-source pair may also be well reconstructed, which will be illustrated by the numerical experiments in Section \ref{sec:examples}.
		\end{remark}
	
\subsection{Convergence analysis}
		In this subsection, we will investigate the convergence properties of the optimization method with an analogous analysis to that of the decomposition methods \cite{Guo1,Colton}.
		
		For later use, we define the acoustic single-layer operator $\mathcal{S}_1:L^2(\Lambda)\to L^2(\Gamma)$ by
		\begin{align}\label{eq:S1}
			(\mathcal{S}_1\varphi)(x):=\int_\Lambda\Phi(x,y)\varphi(y)\mathrm{d}s(y),\quad x\in\Gamma,
		\end{align}
		where $\Gamma$ is a closed $C^2$ curve containing $\Lambda$ in its interior. To start with, we state some properties of the operators $\mathcal{S}$ and $\mathcal{S}_1$ (cf.\cite{Colton}).
		\begin{lem}\label{lem:single}
			The single-layer operators $\mathcal{S}$ and $\mathcal{S}_1$, defined by \eqref{eq:single}  and \eqref{eq:S1}, respectively, are injective and have dense range provided that $k^2$ is not a Dirichlet eigenvalue for the negative Laplacian in the interior of $\Lambda.$
		\end{lem}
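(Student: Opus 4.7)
The plan is to treat the two claims separately. Injectivity follows from a classical single-layer potential argument combining uniqueness for the exterior Dirichlet problem, analytic continuation, and the non-eigenvalue hypothesis. Dense range reduces, via the Hilbert-space identity $\overline{\mathrm{range}(\mathcal{S})}=\ker(\mathcal{S}^*)^\perp$, to injectivity of the adjoint, which is established by the same style of argument applied to a conjugate single-layer potential carried on $\Gamma_R$ (respectively $\Gamma$).

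For injectivity of $\mathcal{S}$, suppose $\varphi\in L^2(\Lambda)$ satisfies $\mathcal{S}\varphi=0$ and introduce the single-layer potential $u(x):=\int_\Lambda\Phi(x,y)\varphi(y)\,\mathrm{d}s(y)$ for $x\in\mathbb{R}^2\setminus\Lambda$. Then $u$ is a radiating Helmholtz solution outside $\Lambda$ with $u=0$ on $\Gamma_R$. Uniqueness of the exterior Dirichlet problem gives $u\equiv 0$ in $\mathbb{R}^2\setminus\overline{B_R}$, and since $u$ is real-analytic in the connected exterior of $\Lambda$, analytic continuation yields $u\equiv 0$ throughout that region. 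The $L^2$-continuity of the single-layer potential across $\Lambda$ then forces the interior trace to vanish as well, and because $k^2$ is not a Dirichlet eigenvalue of $-\Delta$ inside $\Lambda$, we obtain $u\equiv 0$ in the interior of $\Lambda$. The classical jump relation for the normal derivative of a single-layer potential finally produces $\varphi=0$ in $L^2(\Lambda)$. The proof for $\mathcal{S}_1$ is verbatim identical with $\Gamma$ in place of $\Gamma_R$.

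For dense range of $\mathcal{S}$, since $\mathcal{S}$ is bounded between Hilbert spaces it suffices to prove that the adjoint $\mathcal{S}^*\colon L^2(\Gamma_R)\to L^2(\Lambda)$, explicitly given in the paper, is injective. Given $g\in L^2(\Gamma_R)$ with $\mathcal{S}^*g=0$, define $v(y):=\int_{\Gamma_R}\overline{\Phi(x,y)}\,g(x)\,\mathrm{d}s(x)$ on $\mathbb{R}^2$. Since $\overline{\Phi(\cdot,\cdot)}$ is the conjugate fundamental solution, $v$ solves the Helmholtz equation in $\mathbb{R}^2\setminus\Gamma_R$, is continuous across $\Gamma_R$, and satisfies the \emph{incoming} Sommerfeld condition at infinity. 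The hypothesis gives $v=0$ on $\Lambda$; the non-eigenvalue assumption then forces $v\equiv 0$ in the interior of $\Lambda$, and real-analyticity of $v$ in $B_R$ extends this to $v\equiv 0$ throughout $B_R$. Continuity across $\Gamma_R$ gives $v=0$ on $\Gamma_R$ from the outside as well, and uniqueness for the incoming exterior Dirichlet problem (obtained from the standard outgoing uniqueness by complex conjugation) forces $v\equiv 0$ outside $\overline{B_R}$. The jump of the normal derivative across $\Gamma_R$ then yields $g=0$, and the same scheme works for $\mathcal{S}_1^*$.

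The main technical obstacle is justifying the continuity and jump relations for $L^2$ layer densities, which I would handle either through the $H^{-1/2}\to H^{1/2}$ mapping properties of the single-layer operator or by density of smooth functions in $L^2$ combined with the classical pointwise jump relations. One must also be careful with complex conjugation in the duality argument: it swaps outgoing for incoming radiation but preserves the corresponding uniqueness theorem and the structure of the jump relations. All remaining steps are routine applications of standard Helmholtz potential theory.
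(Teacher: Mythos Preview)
Your argument is correct and is exactly the classical single-layer potential proof one finds in Colton and Kress: injectivity via uniqueness for the exterior Dirichlet problem, analytic continuation, the non-eigenvalue hypothesis, and the normal-derivative jump; dense range via injectivity of the adjoint applied to the conjugate (incoming) potential. The paper itself offers no proof of this lemma at all---it merely states the result and cites \cite{Colton}---so there is nothing to compare beyond noting that your sketch reproduces the standard argument from that reference.
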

		Combining the minimization of the Tikhonov functional for \eqref{eq:defect} and the defect minimization \eqref{eq:density}, we define the cost functional $\mu:(L^2(\Lambda))^N\times U\times V\to \mathbb{R}$ by
		\begin{align}\label{eq:cost}
			\mu(\varphi,\Gamma,z';\alpha)=&\sum_{j=1}^N \left\{\left\|u(x;z_j)-u^i(x;z'_j)-(\mathcal{S}\varphi_j
			)(x)\right\|^2_{L^2(\Gamma_R)}+\alpha\left\|\varphi_j(x)\right\|^2_{L^2(\Lambda)}\right.\nonumber\\&\left.+\left\|u^i(x;z'_j)+(\mathcal{S}_1\varphi_j)(x)\right\|^2_{L^2(\Gamma)}\right\},
		\end{align}
		where $\varphi=(\varphi_1,\cdots,\varphi_N)\in(L^2(\Lambda))^N,$ $z'=(z'_1,z'_2,\cdots,z'_N)\in V,$ $\alpha>0$ is the regularization parameter in \eqref{eq:reg_density}.
		
		Now, we turn to analyze the convergence properties of the optimization method. To this end, we first introduce the following definition of optimal obstacle-source pair.
		\begin{definition}\label{def:optimal}
			Given the measured total field $u(x;z_j),j=1,\cdots,N$ and a regularization parameter $\alpha>0,$ an obstacle-source pair $(\Gamma^*,z^*)\in U\times V$ is called optimal if there exists $\varphi^*\in (L^2(\Lambda))^N$ such that $\varphi^*$ and $(\Gamma^*, z^*)$ minimize the cost functional \eqref{eq:cost} simultaneously over all $\varphi\in(L^2(\Lambda))^N$ and $(\Gamma,z')\in U\times V,$ i.e.,
			\begin{equation}\label{eq:optimization}
				\mu(\varphi^*,\Gamma^*,z^*;\alpha)=m(\alpha),
			\end{equation}
			where
			\[
			m(\alpha)=\inf\limits_{\substack{\varphi\in(L^2(\Lambda))^N,\\\Gamma\in U,\, z'\in V}}\mu(\varphi,\Gamma,z';\alpha).
			\]
		\end{definition}
	In terms of Definition \ref{def:optimal}, the following theorem establishes the existence of the optimal solution.
		\begin{thm}
			For each $\alpha>0,$ there exists an optimal obstacle-source pair $(\Gamma^*, z^*)\in U\times V.$
		\end{thm}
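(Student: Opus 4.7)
The plan is a standard direct-method / compactness argument, following the Kirsch--Kress decomposition paradigm, but with extra care for the joint variable $(\Gamma,z')$. I would start by taking a minimizing sequence $\{(\varphi^{(n)},\Gamma^{(n)},z'^{(n)})\}$ in $(L^2(\Lambda))^N\times U\times V$ with
$\mu(\varphi^{(n)},\Gamma^{(n)},z'^{(n)};\alpha)\to m(\alpha)$.
Because $U$ is compact with respect to the $C^{1,\beta}$ norm (equivalently, the radial functions $r^{(n)}$ lie in a bounded subset of $C^2(\mathbb{S})$ with the uniform bounds $a\le r^{(n)}\le b$) and $V$ is compact in the Euclidean sense, I can extract a subsequence (not relabelled) with $\Gamma^{(n)}\to\Gamma^*$ in $C^{1,\beta}$ and $z'^{(n)}\to z^*\in V$. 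Since the admissibility condition $z'_j\in B_R\setminus\overline{D_\Gamma}$ is preserved under these convergences (up to possibly shrinking the admissible sets slightly, which the closedness of $V$ handles), the candidate limit $(\Gamma^*,z^*)$ lies in $U\times V$.

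Next I would exploit the Tikhonov term $\alpha\sum_j\|\varphi_j\|^2_{L^2(\Lambda)}$ appearing in $\mu$: since the cost values are bounded and $\alpha>0$ is fixed, $\{\varphi^{(n)}\}$ is bounded in $(L^2(\Lambda))^N$. Passing to a further subsequence, $\varphi^{(n)}\rightharpoonup \varphi^*$ weakly in $(L^2(\Lambda))^N$, and weak lower semicontinuity of the norm gives
\[
\|\varphi^*_j\|^2_{L^2(\Lambda)}\le \liminf_{n\to\infty}\|\varphi^{(n)}_j\|^2_{L^2(\Lambda)}.
\]
For the first term of $\mu$, note that $\mathcal{S}:L^2(\Lambda)\to L^2(\Gamma_R)$ has an analytic kernel and $\Lambda,\Gamma_R$ are disjoint, so $\mathcal{S}$ is compact; hence $\mathcal{S}\varphi^{(n)}_j\to \mathcal{S}\varphi^*_j$ strongly in $L^2(\Gamma_R)$. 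Combined with the continuity of $z'\mapsto u^i(\cdot;z')|_{\Gamma_R}$ on $V$, this yields
\[
\bigl\|u(\cdot;z_j)-u^i(\cdot;z'^{(n)}_j)-\mathcal{S}\varphi^{(n)}_j\bigr\|_{L^2(\Gamma_R)}\to \bigl\|u(\cdot;z_j)-u^i(\cdot;z^*_j)-\mathcal{S}\varphi^*_j\bigr\|_{L^2(\Gamma_R)}.
\]

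The main obstacle, and what I would save for last, is passing to the limit in the third term $\|u^i(\cdot;z'^{(n)}_j)+\mathcal{S}_1^{(n)}\varphi^{(n)}_j\|_{L^2(\Gamma^{(n)})}$, because both the integrand and the domain of integration vary with $n$. To handle this I would pull everything back to the unit circle $\mathbb{S}$ via the star-like parametrization $\hat x\mapsto r^{(n)}(\hat x)\hat x$, rewriting the $L^2(\Gamma^{(n)})$ norm as an integral on $\mathbb{S}$ weighted by the line element $\sqrt{(r^{(n)})^2+(r^{(n)}_\theta)^2}$, which converges uniformly because $r^{(n)}\to r^*$ in $C^{1,\beta}$. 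The pullback of $\mathcal{S}_1^{(n)}$ becomes a family of integral operators on $\mathbb{S}\to\mathbb{S}\times\Lambda$ whose kernels $\Phi(r^{(n)}(\hat x)\hat x,y)$ converge uniformly on the compact set $\mathbb{S}\times\Lambda$ (since $\Lambda$ is a positive distance from every $\Gamma^{(n)}$ and from $\Gamma^*$ by the admissibility constraints). Thus the pulled-back operators converge in operator norm to the pullback of $\mathcal{S}_1^*$, and composing the (norm-convergent) operators with the (weakly convergent) densities yields strong convergence of $\mathcal{S}_1^{(n)}\varphi^{(n)}_j$ on $\mathbb{S}$; together with continuity of $u^i(\cdot;z'_j)$ in both arguments, this identifies the limit with $\|u^i(\cdot;z^*_j)+\mathcal{S}_1^*\varphi^*_j\|_{L^2(\Gamma^*)}$, where $\mathcal{S}_1^*$ denotes the single-layer operator into $L^2(\Gamma^*)$. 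Adding the three lower-semicontinuous limits gives
\[
\mu(\varphi^*,\Gamma^*,z^*;\alpha)\le \liminf_{n\to\infty}\mu(\varphi^{(n)},\Gamma^{(n)},z'^{(n)};\alpha)=m(\alpha),
\]
so $(\varphi^*,\Gamma^*,z^*)$ attains the infimum, which proves existence. I expect steps one and two to be essentially routine; the genuine technical work is the uniform parametrization/continuity argument in the last paragraph that reconciles the varying integration surface $\Gamma^{(n)}$ with the merely weak convergence of $\varphi^{(n)}$.
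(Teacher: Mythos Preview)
Your argument is correct and follows the same direct-method skeleton as the paper: extract a convergent subsequence in $U\times V$ by compactness, use the Tikhonov term to bound $\{\varphi^{(n)}\}$, pass to a weak limit, and exploit compactness of the single-layer operators (plus uniform convergence of the kernels under $C^{1,\beta}$ convergence of the radial functions) to handle the first and third terms.

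The only noteworthy difference is in the endgame. You conclude via weak lower semicontinuity of the $L^2$ norm, obtaining $\mu(\varphi^*,\Gamma^*,z^*;\alpha)\le m(\alpha)$ and hence equality. The paper instead pushes further: having shown that the first and third terms of $\mu$ actually converge along the subsequence, it infers that $\alpha\sum_j\|\varphi_j^{(n)}\|^2$ converges to $m(\alpha)$ minus those limits, which is at most $\alpha\sum_j\|\varphi_j^*\|^2$; combined with weak convergence this forces $\|\varphi_j^{(n)}\|\to\|\varphi_j^*\|$ and hence strong convergence $\varphi^{(n)}\to\varphi^*$, so that $\mu(\varphi^*,\Gamma^*,z^*;\alpha)=\lim_n\mu(\varphi^{(n)},\Gamma^{(n)},z^{(n)};\alpha)=m(\alpha)$ on the nose. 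Your route is slightly shorter and entirely sufficient for existence; the paper's route additionally yields the strong convergence of the densities, which is not needed for the theorem as stated but is a pleasant byproduct. One small point worth making explicit in your write-up: when you say ``norm-convergent operators composed with weakly convergent densities give strong convergence,'' this relies on the limiting operator being compact (so that $T(\varphi^{(n)}-\varphi^*)\to 0$), which of course holds here since the kernels are smooth on $\mathbb{S}\times\Lambda$.
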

		\begin{proof}
			Let $(\varphi^n,\Gamma^n,z^n)$ be a minimizing sequence in $\Big(L^2(\Lambda)\Big)^N\times U\times V,$ i.e.,
			\[
			\lim\limits_{n\to\infty}\mu(\varphi^n,\Gamma^n,z^n;\alpha)=m(\alpha).
			\]
			Since $U,V$ are compact, we can assume that $\Gamma^n\to\Gamma^*,$ $z^n\to z^*,$ $n\to\infty.$ In this paper, the convergence $\Gamma^n\to\Gamma^*$ is understood in the sense that $\|r^n-r^*\|_{C^{1,\beta}(\mathbb{S})}\to0,$ $n\to\infty,$ with the functions
			$r^n$ and $r^*$ representing $\Gamma^n$ and $\Gamma^*$ via \eqref{eq:Gamma}, respectively. From
			\[
			\alpha\|\varphi^n\|_{(L^2(\Lambda))^N}\le\mu(\varphi^n,\Gamma^n,z^n;\alpha)\to m(\alpha),\ \ n\to\infty,
			\]
			with $\alpha>0,$ we conclude that the sequence $\{\varphi^n\}$ is bounded, i.e., there exists some constant $C$ such that $\|\varphi^n\|_{(L^2(\Lambda))^N}\le C,\forall n.$
			Further, we can assume that $\varphi^n\rightharpoonup \varphi^*\in(L^2(\Lambda))^N,n\to\infty.$ Since $\mathcal{S}$ is compact, it follows that $\mathcal{S}\varphi_j^n\to\mathcal{S}\varphi_j^*,n\to\infty,j=1,\cdots,N.$ The fact that the Hankel function $H_0^{(1)}$ is continuous leads to the following convergence
			\[
			u^i(x;z_j^n)=\frac{\mathrm{i}}{4}H_0^{(1)}(k|x-z_j^n|)\to\frac{\mathrm{i}}{4}H_0^{(1)}(k|x-z_j^*|)
			=u^i(x;z_j^*),\ \ j=1,\cdots,N.
			\]
			Hence,
			\[
			\left\|u(x;z_j)-u^i(x;z_j^n)-(\mathcal{S}\varphi_j^n)(x)\right\|_{L^2(\Gamma_R)}\to
			\left\|u(x;z_j)-u^i(x;z_j^*)-(\mathcal{S}\varphi_j^*)(x)\right\|_{L^2(\Gamma_R)},\ \ n\to\infty.
			\]
			By Taylor's formula, one can estimate that for all $\hat{x}\in\mathbb{S},$
			\[
			\left|\Phi(r^n(\hat{x})\hat{x},y)-\Phi(r^*(\hat{x})\hat{x},y)\right|\le K|r^n(\hat{x})-r^*(\hat{x})|,
			\]
			with
			$$K=\sup_{U\times\Lambda}\left|\nabla_x\Phi(x, y)\right|.$$
			From the Schwarz inequality, it holds that
			\[
			\left|\int_\Lambda\left[\Phi(r^n(\hat{x})\hat{x},y)-\Phi(r^*(\hat{x})\hat{x},y)\right]\varphi_j^n(y)\mathrm{d}s(y)\right|
			\le CK|\Lambda||r_n(\hat{x})-r^*(\hat{x})|,
			\]
			for all $\hat{x}\in\mathbb{S}.$ Denote the dependence of $\mathcal{S}_1:L^2(\Lambda)\to L^2(\Gamma^n)$ on $n$ by writing $\mathcal{S}_1^n,$ and the uniform convergence $\mathcal{S}_1^n\varphi_j^n\to\mathcal{S}_1\varphi^*_j(n\to\infty$) holds. Therefore, we can deduce that
			\[
			\left\|u^i(x;z_j^n)+(\mathcal{S}_1^n\varphi_j^n)(x)\right\|^2_{L^2(\Gamma^n)}\to
			\left\|u^i(x;z_j^*)+(\mathcal{S}_1\varphi_j^*)(x)\right\|^2_{L^2(\Gamma^*)},\quad n\to\infty.
			\]
			Further,
			\begin{align*}
				\alpha\sum_{j=1}^N\left\|\varphi_j^n\right\|^2_{L^2(\Lambda)}\to & m(\alpha)-\sum_{j=1}^N\left\|u(x;z_j)-u^i(x;z_j^*)-(\mathcal{S}\varphi_j^*)(x)\right\|^2_{L^2(\Gamma_R)}                                                                  \\
				& -\sum_{j=1}^N\left\|u^i(x;z_j^*)+(\mathcal{S}_1\varphi_j^*)(x)\right\|_{L^2(\Gamma^*)}^2
				\le\alpha\sum_{j=1}^N\left\|\varphi_j^*\right\|^2_{L^2(\Lambda)},\ \ n\to\infty.
			\end{align*}
			Since $\varphi_j^n\rightharpoonup \varphi_j^*,n\to\infty,j=1,\cdots,N,$ it follows that
			\[
			\lim\limits_{n\to\infty}\sum_{j=1}^N\left\|\varphi_j^n-\varphi_j^*\right\|^2_{L^2(\Lambda)}
			=\lim\limits_{n\to\infty}\sum_{j=1}^N\left\|\varphi_j^n\right\|^2_{L^2(\Lambda)}
			-\sum_{j=1}^N\left\|\varphi_j^*\right\|^2_{L^2(\Lambda)}\le0.
			\]
			Thus, we have the norm convergence $\left\|\varphi_j^n-\varphi_j^*\right\|\to0, n\to\infty$ for $j=1,\cdots,N.$ Finally, it holds that
			\[
			\mu\Big(\varphi^*,\Gamma^*,z^*;\alpha\Big)=\lim\limits_{n\to\infty}\mu\Big(\varphi^n,\Gamma^n,z^n;\alpha\Big)=m(\alpha),
			\]
			and the proof is complete.
		\end{proof}
		Before heading for the convergence analysis of the optimization method, we shall state the convergence property of the cost functional first.
		\begin{thm}\label{thm:convergent}
			Let $z_j,$ $j=1,\cdots,N$ be the exact source points, $u(x;z_j)$ be the exact total fields corresponding to the sound-soft obstacle $D$ such that $\partial D$ belongs to $U.$ Then the cost functional $m(\alpha)$ is convergent in the sense that
			\begin{align}
				\lim\limits_{\alpha\to0}m(\alpha)=0.
			\end{align}
		\end{thm}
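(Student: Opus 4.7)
The goal is to construct, for every $\varepsilon>0$, an admissible triple $(\varphi^{\varepsilon},\Gamma^{\varepsilon},z^{\prime\varepsilon})\in (L^2(\Lambda))^N\times U\times V$ whose three contributions to $\mu$ can all be made small, uniformly in a neighborhood $\alpha\in(0,\alpha^{\ast}(\varepsilon))$. The natural candidate is to take the exact geometry, i.e.\ $\Gamma^{\varepsilon}=\partial D$ and $z^{\prime\varepsilon}=z=(z_1,\dots,z_N)$, which belongs to $U\times V$ by hypothesis, and then to choose the densities $\varphi_j$ so that $\mathcal{S}_1\varphi_j$ approximates $-u^{i}(\cdot;z_j)$ on $\partial D$. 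Once this is accomplished, the defect on $\Gamma_R$ will come for free via an exterior well-posedness argument.

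\textbf{Step 1: dense approximation on $\partial D$.} Apply Lemma \ref{lem:single} with $\Gamma=\partial D$: since $k^2$ is not a Dirichlet eigenvalue in the interior of $\Lambda$, the single-layer operator $\mathcal{S}_1:L^2(\Lambda)\to L^2(\partial D)$ has dense range. Hence for each $j=1,\dots,N$ there exists $\varphi_j^{\varepsilon}\in L^2(\Lambda)$ with
\[
\bigl\|u^{i}(\cdot;z_j)+\mathcal{S}_1\varphi_j^{\varepsilon}\bigr\|_{L^2(\partial D)}<\varepsilon.
\]
This immediately controls the third sum in $\mu$ by $N\varepsilon^{2}$.

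\textbf{Step 2: propagation to $\Gamma_R$.} The function $w_j^{\varepsilon}:=\mathcal{S}_1\varphi_j^{\varepsilon}$ is a radiating solution of the Helmholtz equation in $\mathbb{R}^2\setminus\overline{D}$, and so is $u^{s}(\cdot;z_j)$, which moreover satisfies $u^{s}(\cdot;z_j)=-u^{i}(\cdot;z_j)$ on $\partial D$. Thus $w_j^{\varepsilon}-u^{s}(\cdot;z_j)$ is a radiating solution whose Dirichlet data on $\partial D$ has $L^2$-norm less than $\varepsilon$. By the well-posedness of the exterior Dirichlet problem for the Helmholtz equation (continuous dependence on boundary data from $H^{1/2}(\partial D)\supset$ appropriate traces), there is a constant $C>0$, independent of $\varepsilon$ and $j$, such that
\[
\bigl\|\mathcal{S}\varphi_j^{\varepsilon}-u^{s}(\cdot;z_j)\bigr\|_{L^2(\Gamma_R)}\le C\varepsilon.
\]
Since $u(x;z_j)-u^{i}(x;z_j)=u^{s}(x;z_j)$, the first sum in $\mu$ is then bounded by $C^{2}N\varepsilon^{2}$.

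\textbf{Step 3: choice of $\alpha$ and conclusion.} For the fixed $\varphi^{\varepsilon}=(\varphi_1^{\varepsilon},\dots,\varphi_N^{\varepsilon})$ selected above, choose $\alpha^{\ast}(\varepsilon)>0$ so small that
\[
\alpha\sum_{j=1}^{N}\|\varphi_j^{\varepsilon}\|_{L^2(\Lambda)}^{2}<\varepsilon\quad\text{for all }0<\alpha<\alpha^{\ast}(\varepsilon).
\]
Combining the three estimates,
\[
m(\alpha)\le\mu(\varphi^{\varepsilon},\partial D,z;\alpha)\le(C^{2}+1)N\varepsilon^{2}+\varepsilon\qquad\text{for }0<\alpha<\alpha^{\ast}(\varepsilon).
\]
Since $m(\alpha)$ is monotonically increasing in $\alpha$ (or at least non-negative), letting $\varepsilon\to 0$ gives $\lim_{\alpha\to 0}m(\alpha)=0$.

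\textbf{Main obstacle.} The delicate step is Step 2, the passage from a small Dirichlet defect on $\partial D$ to a small defect of $\mathcal{S}\varphi_j^{\varepsilon}$ on the measurement circle $\Gamma_R$. One has to verify the mapping $g\mapsto v|_{\Gamma_R}$ (where $v$ is the radiating solution with boundary data $g$) is bounded, in the appropriate function spaces; this is a standard but nontrivial use of the exterior Dirichlet solvability for $-\Delta-k^2$ together with interior elliptic regularity in the annular region between $\partial D$ and $\Gamma_R$. Once this stability bound is in place, the rest of the argument is a routine density-plus-regularization construction analogous to that used for the decomposition method in the cited references.
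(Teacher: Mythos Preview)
Your proposal is correct and follows essentially the same approach as the paper: choose the exact pair $(\partial D,z)$ in $U\times V$, use the dense range of $\mathcal{S}_1$ (Lemma~\ref{lem:single}) to approximate $-u^{i}(\cdot;z_j)$ on $\partial D$, invoke continuous dependence of the exterior Dirichlet solution on boundary data to control the $\Gamma_R$ term, and then let $\alpha\to 0$. The paper's proof is terser (it simply cites the continuous-dependence result from \cite{Colton} rather than spelling out the stability argument you flag in your ``main obstacle''), but the logical structure is identical.
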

		\begin{proof}
			By Lemma \ref{lem:single}, given $\varepsilon_j>0,$ there exist $z'_j\in V,\varphi_j\in L^2(\Lambda)$ such that
			\[
			\left\|u^i(x;z'_j)+(\mathcal{S}_1\varphi_j)(x)\right\|_{L^2(\partial D)}\leq\varepsilon_j,\ \ j=1,\cdots,N.
			\]
			Since the scattered field of a radiating solution to the Helmholtz equation depends continuously on the boundary (cf. \cite{Colton}), it holds that
			\begin{align*}
				\left\|u(x; z_j)-u^i(x; z'_j)-(\mathcal{S}\varphi_j)(x)\right\|_{L^2(\Gamma_R)} & \le C\left\|u(x; z_j)-u^i(x; z'_j)-(\mathcal{S}_1\varphi_j)(x)\right\|_{L^2(\partial D)} \\
				& =C\left\|(\mathcal{S}_1\varphi_j)(x)+u^i(x;z'_j)\right\|_{L^2(\partial D)}             \\
				& \le C\varepsilon_j,
			\end{align*}
			where we have used the fact that $u(x;z_j)=0,x\in\partial D.$ Further, we deduce that
			\[
			\mu(\varphi,\partial D,z;\alpha)\le(1+C^2)\sum_{j=1}^N\varepsilon_j^2+\alpha\sum_{j=1}^N\left\|\varphi_j\right\|_{L^2(\Lambda)}^2\to
			(1+C^2)\sum_{j=1}^N\varepsilon_j^2,\ \ \alpha\to0.
			\]
			The theorem is accomplished due to the arbitrariness of each $\varepsilon_j, j=1,2,\cdots,N.$
		\end{proof}
		
		We are now in a position to state the main convergence result.
		\begin{thm}\label{thm:convergence}
			Let $\{\alpha_n\}$ be a null sequence, $(\Gamma^n,z^n)$ be the corresponding sequence of optimal obstacle-source pairs for the regularization parameter $\alpha_n.$ Then there exists a convergent subsequence of $\left\{(\Gamma^n,z^n)\right\}.$ Assume that $u$ is the exact total field of a domain $D$ such that $\partial D$ is contained in $U.$ Then every limit point $(\Gamma^*,z^*)$ of $(\Gamma^n, z^n)$ represents an optimal obstacle-source pair. In other words, if the incident wave due to the source point $z^*$ is scattered by the sound-soft obstacle $D^*$ bounded by $\Gamma^*$, then the total field vanishes on $\Gamma^*$.
		\end{thm}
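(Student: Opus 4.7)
The plan is to combine the compactness of the admissible class $U\times V$ with the cost-functional decay supplied by Theorem~\ref{thm:convergent}, and then transport the vanishing boundary information from $\Gamma_R$ inward via well-posedness of the exterior Helmholtz problem and unique continuation. By compactness of $U$ (in the $C^{1,\beta}$ topology) and of $V$ (in the Euclidean topology), the sequence $\{(\Gamma^n,z^n)\}$ admits a convergent subsequence, still denoted $(\Gamma^n,z^n)\to(\Gamma^*,z^*)\in U\times V$, which yields the first assertion.

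For the main claim, let $\varphi^n=(\varphi_1^n,\ldots,\varphi_N^n)$ be the associated densities realising $\mu(\varphi^n,\Gamma^n,z^n;\alpha_n)=m(\alpha_n)$. Since $\partial D\in U$ and the exact $z\in V$, Theorem~\ref{thm:convergent} gives $m(\alpha_n)\to 0$, which forces every summand of \eqref{eq:cost} to zero; in particular, for each $j=1,\ldots,N$,
\begin{align*}
\bigl\|u(\cdot;z_j)-u^i(\cdot;z_j^n)-\mathcal{S}\varphi_j^n\bigr\|_{L^2(\Gamma_R)}&\to 0,\\
\bigl\|u^i(\cdot;z_j^n)+\mathcal{S}_1^n\varphi_j^n\bigr\|_{L^2(\Gamma^n)}&\to 0.
\end{align*}
Using the continuity of $z\mapsto u^i(\cdot;z)$ on $\Gamma_R$ established in the existence proof, the first limit rewrites as $\mathcal{S}\varphi_j^n\to w_j:=u(\cdot;z_j)-u^i(\cdot;z_j^*)$ in $L^2(\Gamma_R)$. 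The function $w_j=u^i(\cdot;z_j)-u^i(\cdot;z_j^*)+u^s(\cdot;z_j)$ is a radiating Helmholtz solution on $\mathbb{R}^2\setminus(\overline{D}\cup\{z_j,z_j^*\})$, while $\mathcal{S}\varphi_j^n$ is a radiating Helmholtz solution outside $\Lambda$. By well-posedness of the exterior Dirichlet problem on $\mathbb{R}^2\setminus\overline{B_R}$, this $L^2(\Gamma_R)$ convergence lifts to uniform convergence $\mathcal{S}\varphi_j^n\to w_j$ on compact subsets of $\mathbb{R}^2\setminus\overline{B_R}$.

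The next step is the interior extension. Both $\mathcal{S}\varphi_j^n$ and $w_j$ solve the Helmholtz equation on the common connected domain $\Omega:=\mathbb{R}^2\setminus\bigl(\overline{D}\cup\overline{\mathrm{int}(\Lambda)}\cup\{z_j,z_j^*\}\bigr)$ (for $n$ large enough that $z_j^n$ stays away from the compact sets in play). Combining interior elliptic regularity with the two trace bounds above yields uniform local $L^2$ control of $\mathcal{S}\varphi_j^n$ in a neighbourhood of $\Gamma^*\subset\Omega$; unique continuation then upgrades $\mathcal{S}\varphi_j^n\to w_j$ to uniform convergence on compact subsets of $\Omega$. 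A Taylor estimate for $\Phi$ along $\Gamma^n\to\Gamma^*$ in $C^{1,\beta}$, identical in spirit to the one used in the preceding existence proof, allows us to pass to the limit on the moving curves $\Gamma^n$: the vanishing trace bound becomes $u^i(x;z_j^*)+w_j(x)=0$ on $\Gamma^*$. Substituting the definition of $w_j$ yields $u(x;z_j)=0$ on $\Gamma^*$ for each $j$, which is the claimed vanishing of the exact total field and establishes the optimality of $(\Gamma^*,z^*)$.

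The chief obstacle lies in the interior upgrade. The cost functional only gives $\alpha_n\|\varphi_j^n\|^2\to 0$, which leaves room for $\|\varphi_j^n\|$ to blow up like $\alpha_n^{-1/2}$, so uniform $L^2$ bounds on $\mathcal{S}\varphi_j^n$ near $\Gamma^*$ cannot be read off the density norm alone. These bounds must instead be assembled from the two trace bounds on $\Gamma^n$ and $\Gamma_R$ through a regularity argument on the annular region they enclose, precisely in the spirit of the classical decomposition-method convergence analyses of \cite{Colton,Guo1,Kirsch3}. Once this local bound is secured, the unique-continuation and limit-passage steps proceed as outlined.
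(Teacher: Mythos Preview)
Your approach differs substantially from the paper's. The paper's argument is short and uses only the boundary term $\|u^i(\cdot;z_j^n)+\mathcal{S}_1^n\varphi_j^n\|_{L^2(\Gamma^n)}\to 0$. It introduces $u_j^*$ as the scattered field for the limit pair $(\Gamma^*,z_j^*)$, asserts that $u_j^s:=\lim_n\mathcal{S}_1^n\varphi_j^n$ coincides with $u_j^*$, and then observes that $u_j^s+u^i(\cdot;z_j^*)$ vanishes on $\Gamma^*$---which is nothing but the defining boundary condition for $u_j^*$. The $\Gamma_R$ term of the cost functional is never invoked, and the paper does not conclude that the \emph{exact} total field $u(\cdot;z_j)$ vanishes on $\Gamma^*$; your final assertion is thus different from what the theorem, read literally, actually claims.

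The interior-upgrade step carries a genuine gap that your last paragraph concedes but does not close. The mechanism you propose---elliptic regularity on the annulus between $\Gamma^n$ and $\Gamma_R$ followed by unique continuation---is not how the classical decomposition analyses in \cite{Colton,Kirsch3} proceed, and it is problematic here: the Dirichlet problem on a bounded annulus need not be well posed at the given wavenumber, and unique continuation constrains a single Helmholtz solution, not a sequence lacking uniform bounds. The device the paper (tacitly) relies on, and the one you should use instead, is the continuous dependence of the radiating exterior Dirichlet solution on both the boundary curve and the boundary data. Since the single-layer potential $v_n$ is itself the radiating solution of the exterior Dirichlet problem for $\Gamma^n$ with trace $\mathcal{S}_1^n\varphi_j^n$, and since $\Gamma^n\to\Gamma^*$ while these traces tend to $-u^i(\cdot;z_j^*)$, continuous dependence delivers $v_n\to u_j^*$ on $\Gamma_R$ directly---no density bound and no unique-continuation step is required.
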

		\begin{proof}
			The existence of convergent subsequences of $(\Gamma^n,z^n)$ follows from the compactness of $U\times V$. Let $(\Gamma^*,z^*)$ be the limit point. Without loss of generality, we assume that $(\Gamma^n,z^n)\to(\Gamma^*,z^*),n\to\infty,$ i.e., $\Gamma^n\to\Gamma^*,$ $z^n\to z^*,$ $n\to\infty.$ Let $u_j^*$ be the solution to the direct scattering problem with the incident wave emitted by the source point $z_j^*$ for the obstacle with boundary $\Gamma^*,$ i.e., the boundary condition reads
			\begin{align}\label{29}
				u_j^*(\cdot)+u^i(\cdot; z_j^*)=0,\quad\text{on}\ \Gamma^*.
			\end{align}
			Since $(\Gamma^n, z^n)$ is optimal for the parameter $\alpha_n$, there exists $\varphi^n\in (L^2(\Lambda))^N$ such that
			$$
			\mu(\varphi^n,\Gamma^n, z^n;\alpha_n)=m(\alpha_n),
			$$
			for $n=1,2,\cdots$. From Theorem \ref{thm:convergent}, we conclude that the boundary data satisfies
			\begin{align}\label{30}
				&\left\|u^i(x;z^n_j)+(\mathcal{S}^n_1\varphi^n_j)(x)\right\|_{L^2(\Gamma^n)}\to0,\quad n\to\infty.
			\end{align}
			From the condition that $\Gamma^n\to\Gamma^*,z_j^n\to z_j^*,n\to\infty,$
			we obtain that $u_j^s=u_j^*$
			with $u_j^s=\lim\limits_{n\to\infty}\mathcal{S}_1^n\varphi_j^n,$ and  the total field $u^s_j(\cdot)+u^i(\cdot; z^*_j)$ must vanish on $\Gamma^*$ as a result of \eqref{29}.
		\end{proof}
	    
	    Based on the previous discussions, the rudiments of our optimization procedure is summarized in {\bf Algorithm 1}.
	\begin{table}[htp]
		\centering
		\begin{tabular}{cp{.8\textwidth}}
			\toprule
			\multicolumn{2}{l}{{\bf Algorithm 1:}\quad Optimization method for the co-inversion problem.} \\
			\midrule
			{\bf Step 1} &  Given the total field data $u(x; z_j)|_{\Gamma_R}, j=1,\cdots,N$, choose some suitable class $U\times V$ of admissible obstacle-source pairs $(\partial D, S)$; \\
			{\bf Step 2} &  Select an initial curve for the boundary $\partial D$ and an initial guess for the $N$ ordered points $z'=(z'_1,z'_2,\cdots,z'_N)$; \\
			{\bf Step 3} & Place a suitable auxiliary curve $\Lambda$ inside $D$;\\
			{\bf Step 4} & Determine the obstacle-source pair $(\partial D, S)$ by minimizing \eqref{eq:defect}, which is formulated by the following implementations for $j=1,\cdots,N$:
			\begin{itemize}
				\item[(a)] Subtract the incident field $u^i(x; z'_j)$ due to the source point $z'_j$ from the measured total field $u(x; z_j)$, where the incident field $u^i(x; z'_j)$ is given by \eqref{eq:fundamental_solution};
				\item[(b)] Solve the regularized equation \eqref{eq:reg_density} with an appropriate regularization parameter $\alpha$ to find the density $\varphi_j^\alpha$;
				\item[(c)] Represent the approximate scattered field $u_\alpha^s(x;z'_j)$ in terms of the single-layer representation \eqref{eq:single} with density $\varphi_j^\alpha$.
			\end{itemize} \\
		   {\bf Step 5 } & When an error tolerance is fulfilled, the minimizer $(\Gamma^*, z^*)$ of \eqref{eq:defect} can be viewed as the desired reconstruction.\\
			\bottomrule
		\end{tabular}
	\end{table}
	    
	    At the end of this section, we would like to remark that, the nonlinear optimization problems generally suffer from the notorious difficulty of local minimums. Specifically, performance of any iterative solver for the optimization problem \eqref{eq:optimization} would be unavoidably affected by the initial guess $(\Gamma^0, z^0)$. Therefore, choosing an appropriate initial value is highly crucial for the success of our optimization method. Nevertheless, the a priori information of the rough $(\Gamma^0, z^0)$ is not straightforwardly available. And it is thus vital to develop a fast and cheap strategy for resolving this crux, which will be the main concern of the next section. 
	
	\section{Direct sampling for choosing the initial guess}\label{sec:DSM}
	
	In the previous section, we develop an optimization method to reconstruct the obstacle-source pair $(\partial D, S)$ from the total field $\mathbb{U}$ and it has been pointed out that the initial guess $(\Gamma^0, z^0)$ should play an important role in the quality of reconstruction. Noticing that the direct sampling methods (DSM) are independent of any a priori information of the unknown objects and easy to implement, we wish to utilize the sampling-type methods to seek the initial guess for the source $S$ and the obstacle $D$, respectively.
	
	\subsection{Determine the initial guess of source points}\label{subsec4.1}
	
	Motivated by the direct sampling schemes for inverse obstacle/source scattering problems, we first propose the {\it total field} version of indicator functions to reconstruct the source points by knowledge of $\mathbb{U}$.
	Let $\Omega_1\subset B_R$ be a bounded sampling domain such that $D\cup S\subset\Omega_1.$ For any sampling point $y\in\Omega_1$, we define $N$ indicator functions as follows:
	\begin{equation}\label{eq:Ij}
		I_j(y)=\left|\int_{\Gamma_R}\overline{u(x; z_j)}\Phi(x, y)\mathrm{d}s(x)\right|,\ \ j=1,\cdots, N.
	\end{equation}
	Note that the indicators rely on the inner product of the fundamental solution with respect to the total field data rather than the usual incident or scattered data. We expect to locate the peak of $I_j(y)$ for each $j$ and take the corresponding maximum point $\tilde{z}_j$ as an approximation of the exact source $z_j$.
		
	Before analyzing the characteristics of $I_j(y),$ we first claim two lemmas which will play a key role in the subsequent analysis.
		
	\begin{lem}\label{lem:4.1}\cite[Lemma 3.1]{Chen}
			Let $g \in H^{1 / 2}\left(\partial{D}\right).$ Then the scattering problem
			\begin{align}
				\label{eq3.2}\Delta w+k^{2} w= & 0 \quad {\rm in}\ \ \mathbb{R}^2 \backslash \overline{D}, \\
				\label{eq3.3}w= & g \quad {\rm on }\ \ \partial{D}, \\
				\label{eq3.4}\sqrt{r}\left(\frac{\partial w}{\partial r}-\mathrm{i} k w\right) & \rightarrow 0 \quad {\rm as }\ \ r \rightarrow \infty,
			\end{align}
			admits a unique solution $w \in H_{\mathrm{loc}}^{1}\left(\mathbb{R}^2 \backslash \overline{D}\right) $. Moreover, there exists a constant $C>0$ such that
			$$
			\left\|\frac{\partial w }{\partial \nu}\right\|_{H^{-1 / 2}\left(\partial{D}\right)} \leq C\|g\|_{H^{1 / 2}\left(\partial{D}\right)},
			$$
			with $\nu$ the unit outer normal to the boundary $\partial D$.
	\end{lem}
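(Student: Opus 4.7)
The plan is to reduce the exterior Dirichlet problem to a boundary integral equation, establish its well-posedness by Fredholm theory, and then recover the normal-derivative bound via a local $H(\mathrm{div})$ trace argument. First I would represent the solution as a combined double- and single-layer potential with coupling parameter $\eta>0$,
$$w(x) = \int_{\partial D}\frac{\partial \Phi(x,y)}{\partial \nu(y)}\phi(y)\,\mathrm{d}s(y) - \mathrm{i}\eta\int_{\partial D}\Phi(x,y)\phi(y)\,\mathrm{d}s(y),\quad x\in\mathbb{R}^2\setminus\overline{D}.$$
By construction this ansatz satisfies the Helmholtz equation and the Sommerfeld condition, so imposing the Dirichlet data and invoking the jump relations of the layer potentials reduces the problem to the boundary equation $(I+K-\mathrm{i}\eta S)\phi = 2g$ on $\partial D$, where $K$ and $S$ denote the double- and single-layer boundary operators respectively.

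Next I would verify that $I+K-\mathrm{i}\eta S$ is a Fredholm operator of index zero on $H^{-1/2}(\partial D)$ using the smoothing properties of $K$ and $S$ on a $C^2$ curve. Uniqueness of the homogeneous problem follows from the classical argument combining the Sommerfeld condition with an energy identity, Rellich's lemma, and unique continuation; the coupling parameter $\eta>0$ rules out the interior Dirichlet spectrum, so the Fredholm alternative produces a bounded inverse and
$$\|\phi\|_{H^{-1/2}(\partial D)}\leq C\|g\|_{H^{1/2}(\partial D)}.$$
Standard mapping properties of the layer potentials then yield $w\in H^1_{\mathrm{loc}}(\mathbb{R}^2\setminus\overline{D})$ with a local $H^1$ bound in terms of $\|g\|_{H^{1/2}(\partial D)}$.

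For the normal-derivative estimate I would localise by choosing a tubular neighbourhood $U$ of $\partial D$ inside $\mathbb{R}^2\setminus\overline{D}$. Since $w\in H^1(U)$ and $\Delta w = -k^2 w \in L^2(U)$, the vector field $\nabla w$ belongs to $H(\mathrm{div};U)$, and continuity of the normal trace map $H(\mathrm{div};U)\to H^{-1/2}(\partial D)$ gives
$$\bigl\|\partial_\nu w\bigr\|_{H^{-1/2}(\partial D)}\leq C\bigl(\|\nabla w\|_{L^2(U)} + \|\Delta w\|_{L^2(U)}\bigr)\leq C\|g\|_{H^{1/2}(\partial D)}.$$
The main technical obstacle is carrying out the Fredholm theory in the correct fractional Sobolev scale rather than in the classical Hölder setting: verifying that $K$ and $S$ act as compact perturbations on $H^{-1/2}(\partial D)$ requires careful use of the smoothing order of these operators on a $C^2$ boundary, and one must also confirm that the bounded inverse at the integral-equation level passes to a quantitative $H^1_{\mathrm{loc}}$ estimate of $w$ in a neighbourhood of $\partial D$ so that the $H(\mathrm{div})$ trace step closes the chain of inequalities.
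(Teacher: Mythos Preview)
The paper does not actually prove this lemma: it is quoted from \cite[Lemma~3.1]{Chen} and simply cited, with no argument supplied. Your outline is a correct and standard route to the result---the Brakhage--Werner combined-layer ansatz, Fredholm alternative on the boundary, and then an $H(\mathrm{div})$ normal-trace step---so there is nothing to compare against in the paper itself.

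One small correction is worth making. The natural space for the density $\phi$ in the combined-layer representation is $H^{1/2}(\partial D)$, not $H^{-1/2}(\partial D)$: the double-layer potential maps $H^{1/2}(\partial D)\to H^1_{\mathrm{loc}}$, and the identity part of the boundary equation forces the whole equation to live in $H^{1/2}(\partial D)$. On a $C^2$ curve both $K$ and $S$ are compact on $H^{1/2}(\partial D)$, and the coupling $\eta>0$ guarantees injectivity regardless of interior Dirichlet eigenvalues, so the Fredholm alternative yields $\|\phi\|_{H^{1/2}(\partial D)}\le C\|g\|_{H^{1/2}(\partial D)}$. With that adjustment the mapping properties of the layer potentials give the local $H^1$ bound on $w$, and your $H(\mathrm{div})$ trace argument for $\partial_\nu w$ then closes exactly as you wrote.
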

		
	For any $g \in H^{1 / 2}\left(\partial{D}\right)$, the Dirichlet-to-Neumann mapping $\mathbb{T}: H^{1 / 2}\left(\partial{D}\right) \rightarrow H^{-1 / 2}\left(\partial{D}\right)$ for the scattering problem \eqref{eq3.2}-\eqref{eq3.4} is defined as 
	$$ 
		\mathbb{T}(g)=\frac{\partial w}{\partial \nu}.
	$$ 
	By Lemma \ref{lem:4.1}, $\mathbb{T}$ is a bounded linear operator and we will denote $\|\mathbb{T}\|$ its operator norm in this paper.
		
	Analogous to \cite[Lemma 3.2]{Chen}, one can immediately obtain the following estimate concerning the scattered field.
	\begin{lem}\label{lem:us_estimation}
			Let $L:=\min_{1\le j\le N}{\rm dist}\left(z_j,\overline{D}\right)$. Then for each $j=1,\cdots,N$, the following estimate holds
			\[
			|u^s(x; z_j)|\le C_{js}(1+\|\mathbb{T}\|)k^{-1}(RL)^{-1/2},
			\]
			where $x\in\Gamma_R$ is the receiver and the constant $C_{js}$ is dependent of $k, R$ and $L$.
	\end{lem}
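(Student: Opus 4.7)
The plan is to mirror the proof of \cite[Lemma 3.2]{Chen} in our point-source setting. The heart of the argument is to represent $u^s(x;z_j)$ through Green's formula on $\partial D$, then control the resulting boundary pairings by combining the Dirichlet-to-Neumann bound of Lemma \ref{lem:4.1} with the large-argument asymptotics of the Hankel function.

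First, I would apply Green's representation theorem to the radiating solution $u^s(\cdot;z_j)\in H^1_{\mathrm{loc}}(\mathbb{R}^2\setminus\overline{D})$: for any $x\in\Gamma_R$,
\[
u^s(x;z_j)=\int_{\partial D}\Big(\Phi(x,y)\,\partial_\nu u^s(y;z_j)-u^s(y;z_j)\,\partial_{\nu(y)}\Phi(x,y)\Big)\,\mathrm{d}s(y).
\]
Substituting the Dirichlet datum $u^s(y;z_j)=-\Phi(y,z_j)$ on $\partial D$ and estimating each term by the $H^{1/2}(\partial D)$--$H^{-1/2}(\partial D)$ duality produces
\[
|u^s(x;z_j)|\le\|\Phi(x,\cdot)\|_{H^{1/2}(\partial D)}\|\partial_\nu u^s\|_{H^{-1/2}(\partial D)}+\|\Phi(\cdot,z_j)\|_{H^{1/2}(\partial D)}\|\partial_{\nu}\Phi(x,\cdot)\|_{H^{-1/2}(\partial D)}.
\]

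Second, because $u^s(\cdot;z_j)$ is itself the radiating solution to \eqref{eq3.2}--\eqref{eq3.4} with Dirichlet datum $-\Phi(\cdot,z_j)|_{\partial D}$, Lemma \ref{lem:4.1} immediately yields
\[
\|\partial_\nu u^s\|_{H^{-1/2}(\partial D)}\le\|\mathbb{T}\|\,\|\Phi(\cdot,z_j)\|_{H^{1/2}(\partial D)},
\]
so the inequality above factors cleanly and produces a prefactor of the form $(1+\|\mathbb{T}\|)$ multiplying only Sobolev norms of the fundamental solution.

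Third, I would quantify those Sobolev norms of $\Phi$ via the far-field asymptotics $H_0^{(1)}(t),\,H_1^{(1)}(t)=O(t^{-1/2})$ as $t\to\infty$. Because $|x-y|$ is bounded below by a positive constant multiple of $R$ for $x\in\Gamma_R,\,y\in\partial D$ and by $L$ for $y\in\partial D,\,z=z_j$, the resulting pointwise estimates, together with their gradient analogues, translate through the trace theorem and interpolation on $\partial D$ into bounds of the form $\|\Phi(\cdot,z_j)\|_{H^{1/2}(\partial D)}\le C(k,L)L^{-1/2}$ and $\|\Phi(x,\cdot)\|_{H^{1/2}(\partial D)}+\|\partial_\nu\Phi(x,\cdot)\|_{H^{-1/2}(\partial D)}\le C(k,R)R^{-1/2}$, with constants depending on $k$, $R$, $L$ and the geometry but independent of $\|\mathbb{T}\|$. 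Collecting these into the duality bound and absorbing the $k$-dependence into $C_{js}=C_{js}(k,R,L)$ delivers the asserted estimate.

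The main obstacle is step three: carefully balancing the factors of $k$, $R$, $L$ so that all $\|\mathbb{T}\|$-independent contributions package into a single constant $C_{js}(k,R,L)$ while the explicit scaling $k^{-1}(RL)^{-1/2}$ remains exposed. I would follow \cite{Chen} on this point, exploiting reciprocity $\Phi(x,y)=\Phi(y,x)$ to view $\Phi(x,\cdot)$ itself as the trace of a radiating solution and shifting derivatives onto smooth test functions via Green's identity, so that each Hankel-function factor contributes the expected power of $k^{-1/2}$ paired with the respective distance factor $R^{-1/2}$ or $L^{-1/2}$.
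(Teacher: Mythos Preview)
Your proposal is correct and matches the paper's approach: the paper does not supply its own proof but simply states that the estimate follows ``analogous to \cite[Lemma~3.2]{Chen}'', and your outline is precisely a reproduction of that argument adapted to the present point-source setting. The three steps you describe --- Green's representation on $\partial D$, the Dirichlet-to-Neumann bound from Lemma~\ref{lem:4.1} to produce the factor $(1+\|\mathbb{T}\|)$, and the Hankel-function decay $|H_n^{(1)}(t)|\le C t^{-1/2}$ to extract the distance factors $R^{-1/2}$ and $L^{-1/2}$ --- are exactly the ingredients of Chen--Huang's proof, so there is nothing to add.
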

	
	The following theorem sheds light on the indicating properties of $I_j(y), j=1,\cdots,N$. 
		\begin{thm}
			Let  $I_j(y), j=1,\cdots,N,$  be defined by \eqref{eq:Ij} and assume that 
			\begin{equation}\label{eq:assumption}
				L_j:={\rm dist}(z_j, \Gamma_R)>\sup_{y\in\Omega_1}|y-z_j|.
			\end{equation}
			Then there exists a constant $C_j>0$ such that for each $y\in \Omega_1,\ \ y\ne z_j,$
			\begin{align}
				I_j(y)\le C_j k^{-1}|y-z_j|^{-1/2},\quad j=1,\cdots,N.
			\end{align}
		\end{thm}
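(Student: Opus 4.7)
\emph{Proof plan.} The strategy is to split the total field into its incident and scattered parts and estimate the two resulting pieces separately. Writing $u(x;z_j)=\Phi(x,z_j)+u^s(x;z_j)$, the triangle inequality gives $I_j(y)\le I_j^{(1)}(y)+I_j^{(2)}(y)$, where
\[
I_j^{(1)}(y):=\left|\int_{\Gamma_R}\overline{\Phi(x,z_j)}\,\Phi(x,y)\,\mathrm{d}s(x)\right|,\qquad
I_j^{(2)}(y):=\left|\int_{\Gamma_R}\overline{u^s(x;z_j)}\,\Phi(x,y)\,\mathrm{d}s(x)\right|.
\]
The leading contribution comes from $I_j^{(1)}$, a circle integral of two fundamental solutions, which I would reduce to a Bessel function via oscillatory asymptotics; the scattered-field term $I_j^{(2)}$ is of lower order and will be handled by combining Lemma \ref{lem:us_estimation} with a pointwise bound on $\Phi(x,y)$.

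For $I_j^{(1)}$, I would insert the large-argument Hankel expansion $H_0^{(1)}(t)=\sqrt{2/(\pi t)}\,e^{i(t-\pi/4)}\bigl(1+O(1/t)\bigr)$ into both factors, with $t=k|x-z_j|$ and $t=k|x-y|$. The separation condition \eqref{eq:assumption} forces $\Omega_1$ to sit strictly inside $B_R$ and yields uniform lower bounds on $|x-z_j|$ and $|x-y|$ for $x\in\Gamma_R$ and $y\in\Omega_1$, so these asymptotics are valid uniformly. Using $|x-z|=R-\hat{x}\cdot z+O(R^{-1})$ for $|x|=R$ and $z$ in a bounded set, the leading part of $\overline{\Phi(x,z_j)}\,\Phi(x,y)$ on $\Gamma_R$ simplifies to $(8\pi kR)^{-1}e^{ik\hat{x}\cdot(z_j-y)}$. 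Parametrising $\Gamma_R$ by $\hat{x}=(\cos\theta,\sin\theta)$ with $\mathrm{d}s(x)=R\,\mathrm{d}\theta$ and applying the classical identity $\int_{0}^{2\pi}e^{ik\hat{x}\cdot d}\,\mathrm{d}\theta=2\pi J_0(k|d|)$, the leading term of $I_j^{(1)}(y)$ becomes $(4k)^{-1}|J_0(k|y-z_j|)|$. The uniform Bessel bound $|J_0(t)|\le C(1+t)^{-1/2}$ then delivers $I_j^{(1)}(y)\le C\,k^{-1}|y-z_j|^{-1/2}$, while the remainders from the Hankel expansion contribute strictly faster-decaying terms absorbed into the constant.

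For $I_j^{(2)}$, I would apply Lemma \ref{lem:us_estimation} to bound $|u^s(x;z_j)|$ uniformly on $\Gamma_R$, and use the standard pointwise estimate $|\Phi(x,y)|\le Ck^{-1/2}|x-y|^{-1/2}$ together with the uniform lower bound on $|x-y|$ (available because the separation condition keeps $\Omega_1$ bounded away from $\Gamma_R$) to bound $\int_{\Gamma_R}|\Phi(x,y)|\,\mathrm{d}s(x)$ by a constant independent of $y\in\Omega_1$. This yields $I_j^{(2)}(y)\le C$ uniformly on $\Omega_1$, which is then absorbed into the target form $C_j\,k^{-1}|y-z_j|^{-1/2}$ using the fact that $|y-z_j|$ is bounded above on $\Omega_1$.

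The main obstacle will be the bookkeeping for the two-factor Hankel expansion: verifying that the cross-remainder terms in $\overline{H_0^{(1)}(k|x-z_j|)}\,H_0^{(1)}(k|x-y|)$ integrate to quantities of strictly smaller order than $k^{-1}|y-z_j|^{-1/2}$, so that the Bessel-function leading term determines the decay rate. This is precisely where the hypothesis \eqref{eq:assumption} is essential, since it guarantees uniform control of the arguments of both Hankel functions over the entire measurement circle $\Gamma_R$ and over all sampling points $y\in\Omega_1$.
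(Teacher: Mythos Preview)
Your plan is sound in principle but takes a genuinely different route from the paper. The paper never invokes Hankel asymptotics, the phase expansion $|x-z|=R-\hat{x}\cdot z+O(R^{-1})$, or the $J_0$ identity. Instead it uses only the crude pointwise bound $|H_0^{(1)}(t)|\le\sqrt{2/(\pi t)}$, valid for all $t>0$, to dominate both $|\Phi(x,z_j)|$ and $|\Phi(x,y)|$, and then applies the reverse triangle inequality $|x-y|\ge\bigl||x-z_j|-|y-z_j|\bigr|$. The hypothesis \eqref{eq:assumption} is used not merely to keep distances bounded below, but to extract a constant $C_{j2}>0$ with $L_j\ge(1+C_{j2})\sup_{y\in\Omega_1}|y-z_j|$, whence $|x-y|\ge C_{j2}|y-z_j|$ for every $x\in\Gamma_R$. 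Plugging this into $|x-y|^{-1/2}$ produces the factor $|y-z_j|^{-1/2}$ directly in both the incident and scattered pieces, with no oscillatory cancellation and no remainder bookkeeping.

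Your oscillatory-integral approach is more informative physically---it explains the peak of $I_j$ at $z_j$ via the $J_0$ kernel---but it is also heavier: the two-factor Hankel expansion and the phase linearisation each generate remainder terms that are merely bounded (of size $O(k/R)$ after integration), not ``strictly faster-decaying'' in $|y-z_j|$ as you suggest. You would ultimately have to absorb those bounded remainders into $C_jk^{-1}|y-z_j|^{-1/2}$ exactly as you do for $I_j^{(2)}$, using $|y-z_j|\le M$; this works, but it means the $J_0$ computation is not really doing the heavy lifting for the stated inequality. The paper's argument sidesteps all of this with a two-line estimate, at the cost of giving no insight into the indicator's actual shape near the source.
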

		\begin{proof}
		We first recall the following estimate for the Hankel function \cite[(3.8)]{Chen}:
		\begin{align}
			\left|H_0^{(1)}(t)\right|\le\sqrt{\frac{2}{\pi t}},\ \ \forall t>0.
		\end{align}
	Then from the representation of incident field $u^i(x; z_j),$ it can be easily derived that
			\begin{align}
				\nonumber\int_{\Gamma_R}\left|\overline{u^i(x;z_j)}\Phi(x,y)\right|\mathrm{d}s(x) & =\int_{\Gamma_R}\left|\overline{\frac{\mathrm{i}}{4}H_0^{(1)}(k|x-z_j|)}\frac{\mathrm{i}}{4}H_0^{(1)}(k|x-y|)\right|\mathrm{d}s(x) \\
				& \nonumber\le\frac{1}{16}\int_{\Gamma_R}\sqrt{\frac{2}{\pi k|x-z_j|}}\sqrt{\frac{2}{\pi k|x-y|}}\mathrm{d}s(x) \\
				& =\frac{1}{8k\pi}\int_{\Gamma_R}\frac{1}{\sqrt{|x-z_j||x-y|}}\mathrm{d}s(x). \label{eq:ui}
			\end{align}
		
	Further, by Lemma \ref{lem:us_estimation}, we deduce that there is a positive constant $C_{js}$ such that
			\begin{align}
				\nonumber\int_{\Gamma_R}\left|\overline{u^s(x; z_j)}\Phi(x, y)\right|\mathrm{d}s(x) & =\int_{\Gamma_R}\left|{u^s(x; z_j)}\right|\left|\frac{\mathrm{i}}{4}H_0^{(1)}(k|x-y|)\right|\mathrm{d}s(x) \\
				& \le \frac{C_{js}\sqrt{2}}{4k^{3/2}(\pi RL)^{1/2}}\int_{\Gamma_R}\frac{1+\|T\|}{\sqrt{|x-y|}}\mathrm{d}s(x). \label{eq:us}
			\end{align}	
	Combining \eqref{eq:ui} and \eqref{eq:us}, together with the aid of $|x-y|\ge\left||x-z_j|-|y-z_j|\right|$, we have 
	\begin{align*}
		I_j(y)  & =  \left|\int_{\Gamma_R}\overline{u^i(x;z_j)}\Phi(x,y)\mathrm{d}s(x)+\int_{\Gamma_R}\overline{u^s(x;z_j)}\Phi(x,y)\mathrm{d}s(x)\right| \\
		        & \le  \int_{\Gamma_R}\left|\overline{u^i(x;z_j)}\Phi(x,y)\right|\mathrm{d}s(x)+\int_{\Gamma_R}\left|\overline{u^s(x;z_j)}\Phi(x,y)\right|\mathrm{d}s(x) \\
		        & \le  C_{j1}\left(\int_{\Gamma_R}k^{-1}(|x-z_j||x-y|)^{-1/2}\mathrm{d}s(x)+\int_{\Gamma_R}k^{-3/2}(1+\|T\|){|x-y|}^{-1/2}\mathrm{d}s(x)\right)\\
		       & \le   C_{j1}\Bigg(\int_{\Gamma_R}{k^{-1}\Big(|x-z_j|\left||x-z_j|-|y-z_j|\right|\Big)^{-1/2}}\mathrm{d}s(x)\\
		       &\quad +\int_{\Gamma_R}k^{-3/2}(1+\|\mathbb{T}\|)\left||x-z_j|-|y-z_j|\right|^{-1/2}\mathrm{d}s(x)\Bigg),
	\end{align*}
	where
	$$
	C_{j1}=\max\left\{\frac{1}{8\pi}, \frac{C_{js}\sqrt{2}}{4(\pi R L)^{1/2}}\right\}.
	$$
	
	Assumption \eqref{eq:assumption} implies that there exists a positive constant $C_{j2}$ such that $L_j\ge (1+C_{j2})\sup_{y\in\Omega_1}|y-z_j|$. Then
	\begin{align*}
		I_j(y) & \le \frac{2\pi C_{j1}R}{k\sqrt{L_j}\sqrt{C_{j2}|y-z_j|}}+\frac{2C_{j1}\pi(1+\|\mathbb{T}\|)R}{k^{3/2}\sqrt{C_{j2}|y-z_j|}} \\
		& \le C_j k^{-1}|y-z_j|^{-1/2},
	\end{align*}
	where
	$$
	C_j=\frac{2\pi R C_{j1}}{\sqrt{C_{j2}}}\left(\frac{1}{\sqrt{L_j}}+\frac{1+\|\mathbb{T}\|}{\sqrt{k}}\right),
	$$ 
	and this completes the proof.
	\end{proof}

	In virtue of the above theorem, each function $I_j(y)$ should decay as the sampling point $y$ recedes from the corresponding source point $z_j$. Hence,  the initial detection can be conveniently chosen by locating the point $\tilde{z}_j$ where $I_j(y)$ reaches its apex. Then in the optimization process, the exact source location $z_j$ could be effortlessly and accurately found in the vicinity of $\tilde{z}_j$. This indicating behavior will be numerically verified by the experiments in the next section.
	
	\subsection{Determine the initial guess of the obstacle}\label{sec:approximate reverse time migration method}
	The indicator function proposed in the previous subsection has the capability of locating the approximate source points, but it usually fails to image the shape of obstacle. Thus a further sampling step for identifying the rough obstacle should be taken into account. By integrating the approximate source points into the reverse time migration (RTM) approach \cite{Chen1}, we propose an approximate reverse time migration method to reconstruct the obstacle in this subsection.
		
	To reconstruct the obstacle, information about the scattered field is indispensable in the classical RTM. However, the scattered field can not be measured directly due to the existence of unknown source points. So we are going to approximate the scattered field by subtracting the incident field $u^i(x; \tilde{z}_j)$ due to the numerical source point $\tilde{z}_j$ from the measured total field $u(x; z_j),$  i.e.,
	\begin{align}\label{eq:us_j}
		u_j^s(x)=u(x;z_j)-u^i(x;\tilde{z}_j),\quad x\in\Gamma_R,
	\end{align}
	and develop an imaging functional $I_D(y)$ using the approximate scattered field $u_j^s(x)$. To this end, confining the imaging domain to $\Omega_2\subset\Omega_1$ such that $\overline{D}\subset \Omega_2,$ $S\subset\Omega_1\backslash\overline{\Omega}_2,$ and we define the imaging function for $y\in\Omega_2$ by
	\begin{align}\label{eq:I_D}
		I_D(y):=-k^2{\rm Im}\left(\sum_{j=1}^N\Phi(y,z_j)\int_{\Gamma_R}\overline{u^s_j(x)}\Phi(x,y)\mathrm{d}s(x)\right).
	\end{align}
	Note that, in \eqref{eq:I_D}, instead of the exact scattered field $u^s(x;z_j)$ corresponding to the $j$-th source point $z_j,$ we use an approximate scattered field $u^s_j(x)$ for $z_j$. That is why this method is called the approximate reverse time migration method. Once the source points are retrieved, the indicating behavior of the approximate RTM immediately follows the original RTM \cite{Chen, Chen1}.  The imaging function $I_D(y)$ provides a qualitative information of the obstacle $D$,  which will be used as a useful clue to discover the obstacle in the optimization method. 
		
	We end this section with a brief description of {\bf Algorithm 2}, which can be used to determine the initial guess for the optimization method proposed in Section \ref{sec:optimization}.
	\begin{table}[htp]
		\centering
		\begin{tabular}{cp{.8\textwidth}}
			\toprule
			\multicolumn{2}{l}{{\bf Algorithm 2:}\quad Determine the initial guess by the direct sampling methods.} \\
			\midrule
			{\bf Step 1} &  For each $j=1,\cdots,N$, collect the total field data $u(x; z_j)|_{\Gamma_R}$; \\
			{\bf Step 2} &  Choose a proper sampling domain $\Omega_1\subset B_R$ such that $D\cup S\subset \Omega_1$ and generate the sampling grid $\mathcal{T}_1$ over $\Omega_1$;\\
			{\bf Step 3} & Locate the maximum of $I_j(y)$ for each $j$ and take the corresponding $\tilde{z}_j$ as an approximation of $z_j$. The locations $\{\tilde{z}_j\}_{j=1}^N$ could be found one by one in this way and will be reused as the initial guess for the source points in the optimization method; \\
			{\bf Step 4} &  For $j=1,\ldots, N$, compute the approximate scattered field $u^s_j$ by \eqref{eq:us_j}; \\
			{\bf Step 5} & Choose a proper imaging domain $\Omega_2\subset\Omega_1$ such that $\overline{D}\subset\Omega_2$, $S\subset \Omega_1\backslash\overline{\Omega}_2$ and generate the sampling grid $\mathcal{T}_2$ over $\Omega_2$; \\
			{\bf Step 6} & Evaluate $I_D(y)$ for the imaging point $y\in\mathcal{T}_2$ by \eqref{eq:I_D} and accordingly select an approximate initial guess for the boundary curve in the optimization method. \\
			\bottomrule
		\end{tabular}
	\end{table}
		
	\section{Numerical experiments}\label{sec:examples}
	In this section, we will present several numerical experiments to demonstrate the performance of the proposed method. In our experiments, synthetic total field data are generated by reformulating the direct problem \eqref{eq:Helmholtz}-\eqref{eq:Sommerfeld} as a boundary integral equation and the integral equation is solved by the Nystr\"{o}m method \cite{Colton}. Here we use a numerical quadrature rule with $64$ equidistant grid points on $[0,2\pi]$. The receivers are set to be $x_r=4(\cos\theta_r,\sin\theta_r),$ $\theta_r={r\theta}/{N_R},$ $r=1,\cdots,N_R$ with $\theta\in (0, 2\pi]$ the observation aperture. For the full-aperture case, $\theta=2\pi,$ $N_R=120$. For the limited-aperture case, we consider two cases with $\theta=3\pi/2,$ $N_R=90$ and $\theta=\pi,$ $N_R=60$, respectively.
	The forward solver provides us with the total field data
	\[
	u(x_r; z_j),\quad r=1,\cdots,N_R,\quad j=1,\cdots,N.
	\]
	
	To test the stability of the numerical method, random noise of different levels is added to the total field data $u(x; z_j)$ to produce the noisy total field data
	\[
	u^\varepsilon:=u+\varepsilon r_1|u|\mathrm{e}^{\mathrm{i}\pi r_2},
	\]
	where $r_1,$ $r_2$ are two uniformly distributed random numbers ranging from $-1$ to $1$ and $\varepsilon>0$ is the noise level. We use $\varepsilon=10\%$ in Examples \ref{example1}-\ref{example3} and $\varepsilon=5\%$ in Example \ref{example4}.
		
	In all the subsequent examples, the sampling domains $\Omega_1$ are chosen as squares centered at the origin with $\mathcal{T}_1$ being a $200\times200$ uniform grid. The imaging domains $\Omega_2$ are individualized case by case, according to the sampling results for the point sources. The imaging points $\mathcal{T}_2$ are selected as $100\times100$ uniform grids distributed over $\Omega_2$.	
	For the optimization method, we choose the auxiliary curve $\Lambda$ to be a circle centered at the origin with radius $r_\Lambda=0.6.$ The integral over the unit circle $\mathbb{S}$ is numerically approximated by the trapezoidal rule with $64$ grid points. To solve the nonlinear least-squares problem, the Levenberg-Marquardt algorithm is adopted with both functional value stopping criterion and successive iterate stopping criterion chosen to be $10^{-6}$ for the full-aperture case and $10^{-4}$ for the limited-aperture problems. The starting curve $\Gamma_0$ for the Levenberg-Marquardt algorithm is a circle centered at the origin with radius $r_{\Gamma_0}=\left|\arg\max_{y\in\Omega_2}I_D(y)\right|$. For $j=1,\cdots,N$, the starting guesses for the source points $\{z_j^0\}_{j=1}^N$ are chosen to be $\{\tilde{z}_j\}_{j=1}^N$. The regularization parameter $\alpha$ is selected by trial and error. The ansatz function $r(t)$ is chosen in the $(2M+1)$-dimensional subspace $U_M\subset U$ and can be expanded by trigonometric polynomials of degree less than or equal to $M,$ i.e.,
	\begin{align}\label{eq:rM}
		r_M(t)=r_\Lambda+a_0+\sum_{m=1}^M(a_m\cos mt+b_m\sin mt),
	\end{align}
	with some $M\in\mathbb{N}_+$. Unless otherwise specified, $M=8$ is used in what follows. 
		
	To evaluate the reconstruction quantitatively, we introduce an equidistant set of knots  on $[0,2\pi]$ by $t_i:=2\pi i/N_1,i=0,1,\cdots,N_1-1,$ with $N_1$ the number of knots chosen to be 256 in our numerical experiments. The discrete relative $L^2$ error of the boundary curve is defined by
	\begin{align}\label{eq:error1}
		E_D:=\frac{\left(\sum\limits_{i=1}^{N_1}|r_M(\tau_i)-r^*(\tau_i)|^2\right)^{1/2}}{\left(\sum\limits_{i=1}^{N_1}|r^*(\tau_i)|^2\right)^{1/2}},
	\end{align}
	with $r^*$ standing for the exact boundary curve and $\tau_i=\tau(t_i)$ being $t_i$ or some transformation of $t_i$ depending on whether the exact boundary is star-like or not, respectively.
		
	In the following figures regarding geometrical settings of the problem, the red and blue points denote respectively the exact source locations and receivers, the green solid line denotes the boundary of the obstacle, the black and purple dashed lines denote the boundary of the sampling domains $\Omega_1$ and $\Omega_2,$ respectively. In the figures about the results of the numerical experiments, the small black `+' markers are the exact source points, the small red circles stand for the reconstructed source points, the green solid line denotes the exact boundary of the obstacle, the blue dashed line denotes the reconstructed boundary.
	In addition, for ease of comparison, the indicator functions $I_j(y),$ $j=1,\cdots,N,$ and the imaging function $I_D(y)$ are normalized so that the maximum value of each function is $1.$
		
	\begin{example}\label{example1}
	In the first example, we consider the reconstruction of a starfish-shaped obstacle and the influence of source quantities. The exact boundary of the starfish-shaped obstacle is parameterized by
	$$
		x_S(t)=(1+0.2\cos 5t)(\cos t,\sin t),\quad0\le t\le2\pi.
	$$
	
	We first consider the case where the five source points are respectively located at 
	$$
	S_1=\{(2,2),(-0.5,1.8),(-1.6,-1),(0,-2.2),(2,-1.2)\}.
	$$ 
	
	The sampling domains are chosen to be $\Omega_1=[-2.5,2.5]\times[-2.5,2.5]$, $\Omega_2=[-1.5,1.5]\times[-1.5,1.5]$. The regularization parameter and the wave number are set to be $\alpha=10^{-8}$ and $k=5$, respectively. 
			
	Figure \ref{fig:starfish2}(a) exhibits the geometry setting. Figure \ref{fig:starfish2}(b) shows the reconstructed locations of $S_1$ by the DSM and we can find that the reconstructed locations of the source points are very close to the exact locations. Figure \ref{fig:starfish2}(c) shows the image of the imaging function $I_D(y)$ over the sampling domain $\Omega_2.$ We find that in Figure \ref{fig:starfish2}(c), the shape of the obstacle can be roughly captured.
	In Figure \ref{fig:starfish2}(d), reconstruction of the optimization method is depicted, where the initial guess for the source points are chosen to be the numerical source points obtained by the DSM and the initial curve is represented by the black dashed circle in \ref{fig:starfish2}(c). Further, we compute the error of boundary curve by \eqref{eq:error1} with $\tau_i=t_i,i=1,\cdots,N$ and in this case $E_D=5.73\%$.		
	In Table \ref{tab:star_5'_location}, we list the exact and  reconstructed locations of source points $S_1$ by the DSM and the optimization method, respectively. According to Table \ref{tab:star_5'_location} and Figure \ref{fig:starfish2}, we want to point out that both the DSM and optimization method could produce satisfactory reconstructions of the source points. A promising advantage of the optimization method is that the obstacle can also be reconstructed quantitatively, i.e, a significantly more sharper profile can be identified by the optimization method.
	
	\begin{figure}[htpb]
		\centering
		\subfigure[]{\includegraphics[width=0.24\linewidth]{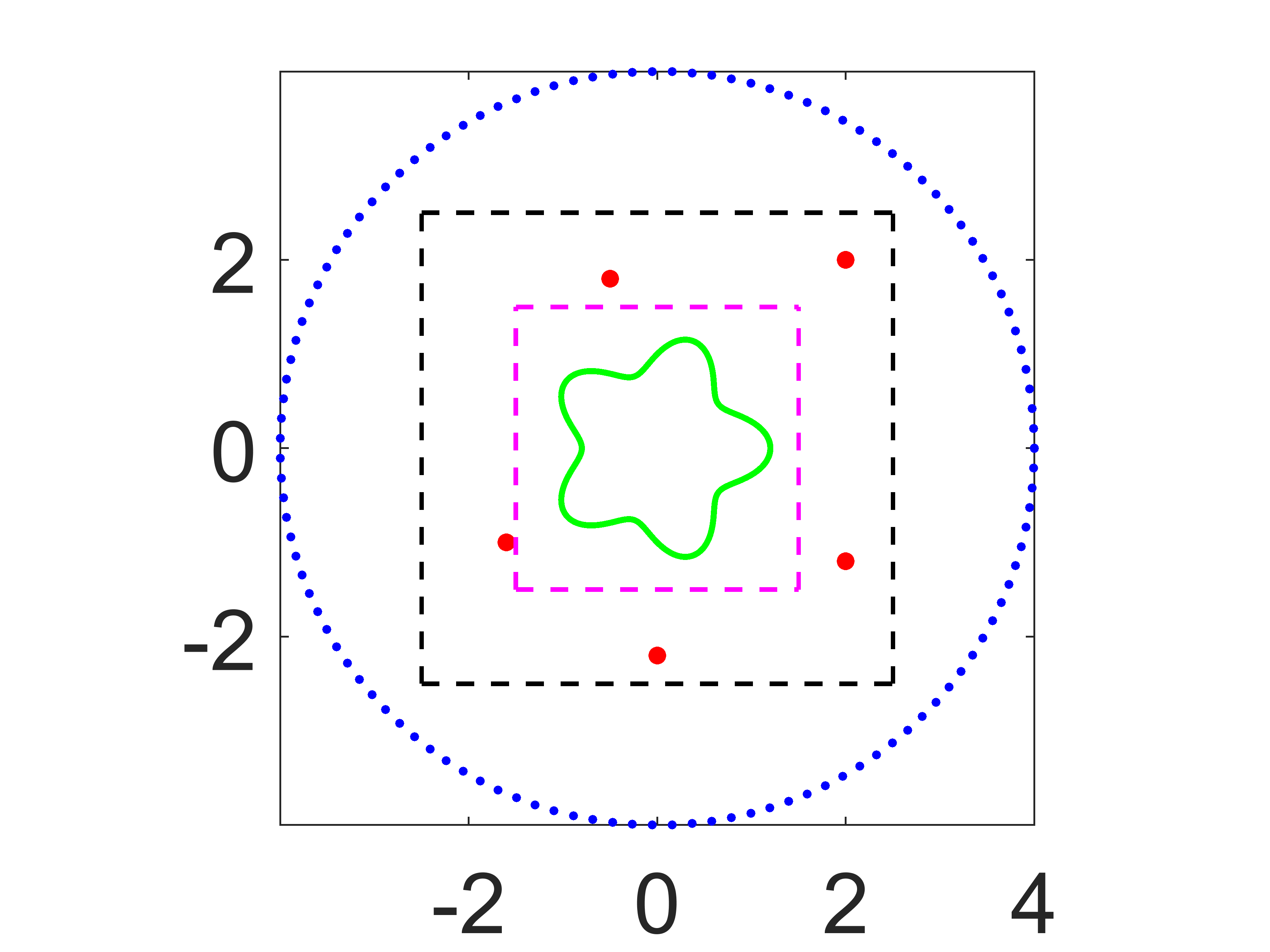}}
		\subfigure[]{\includegraphics[width=0.24\linewidth]{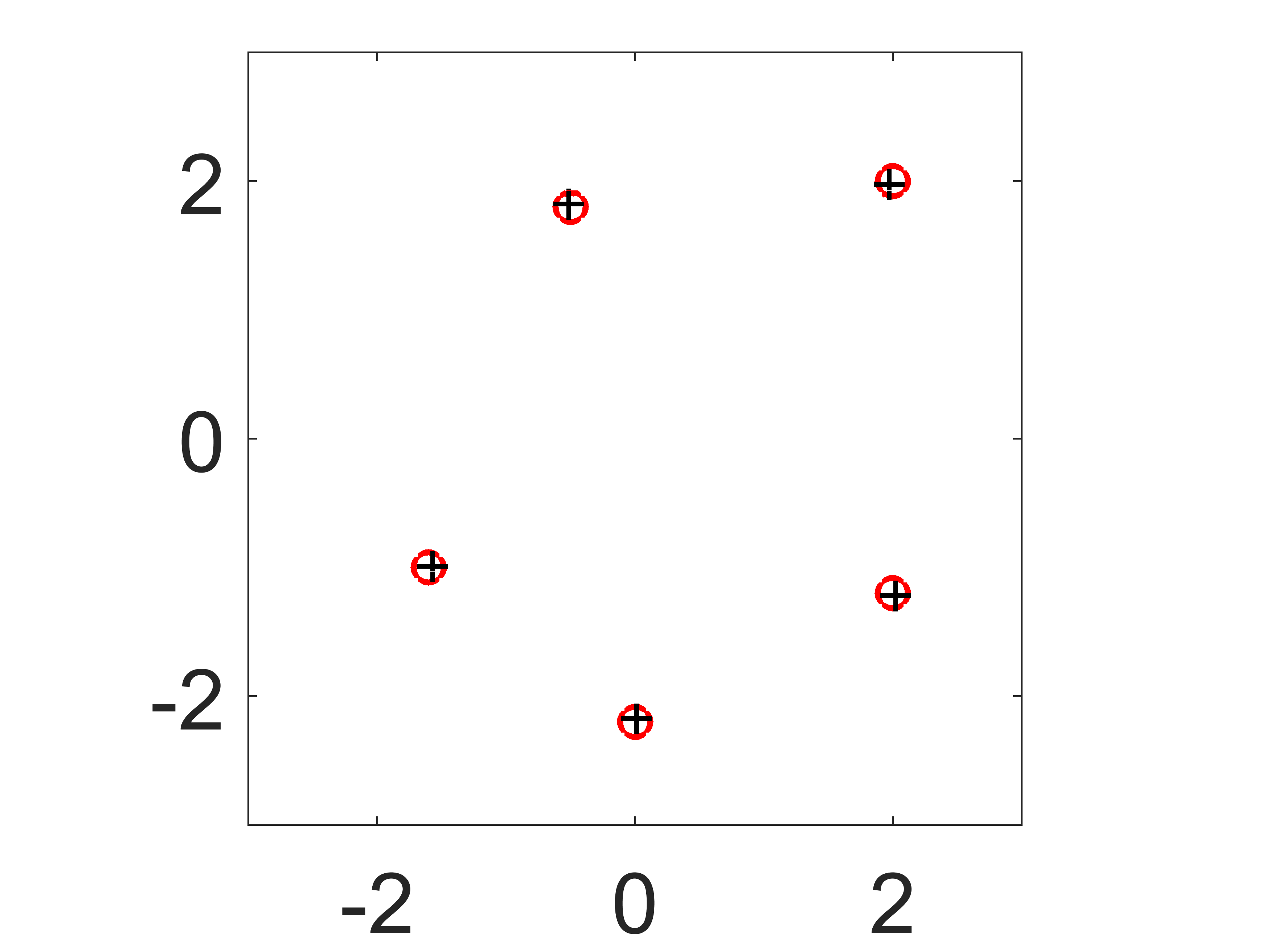}}
		\subfigure[]{\includegraphics[width=0.24\linewidth]{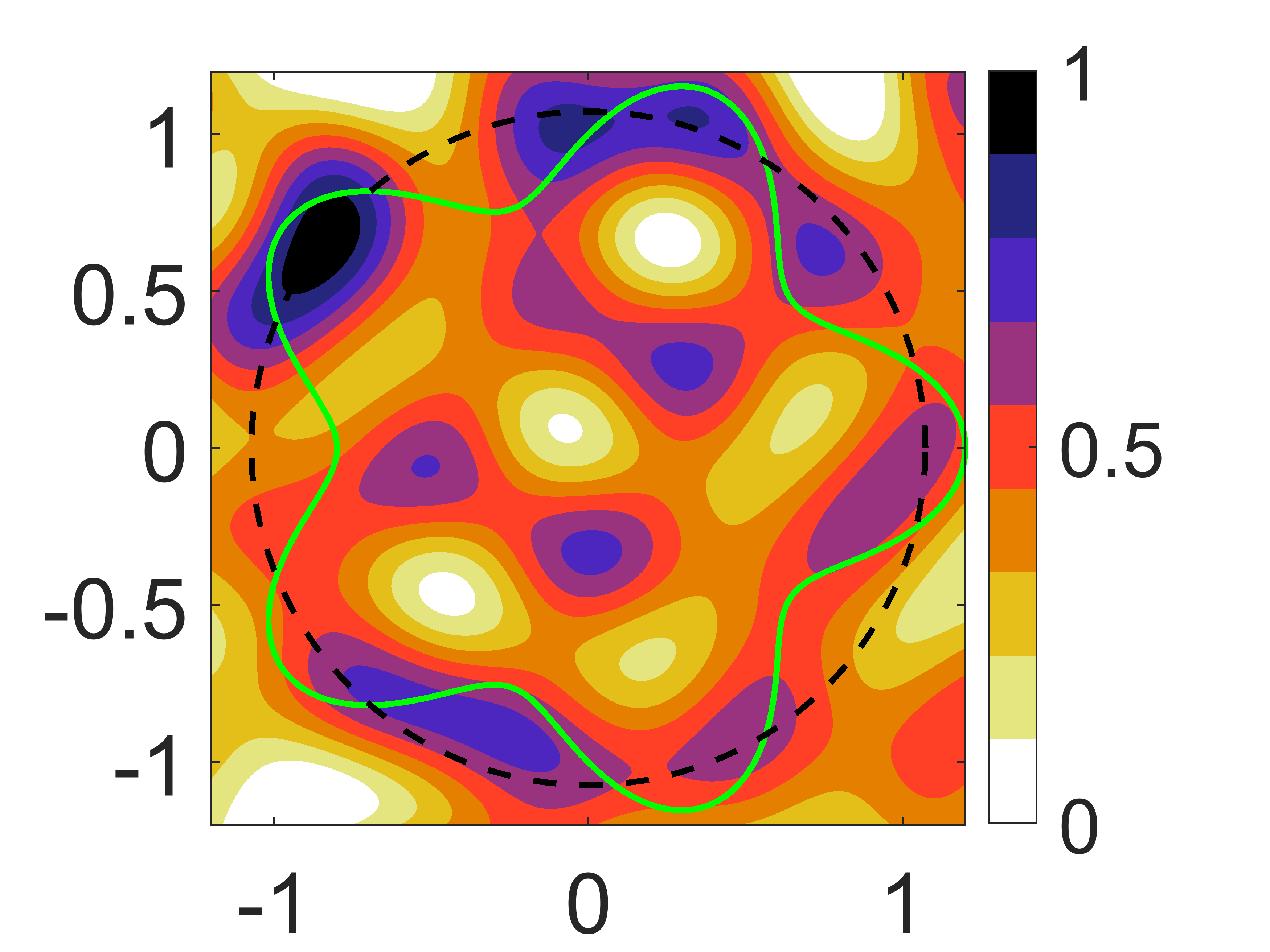}}
		\subfigure[]{\includegraphics[width=0.24\linewidth]{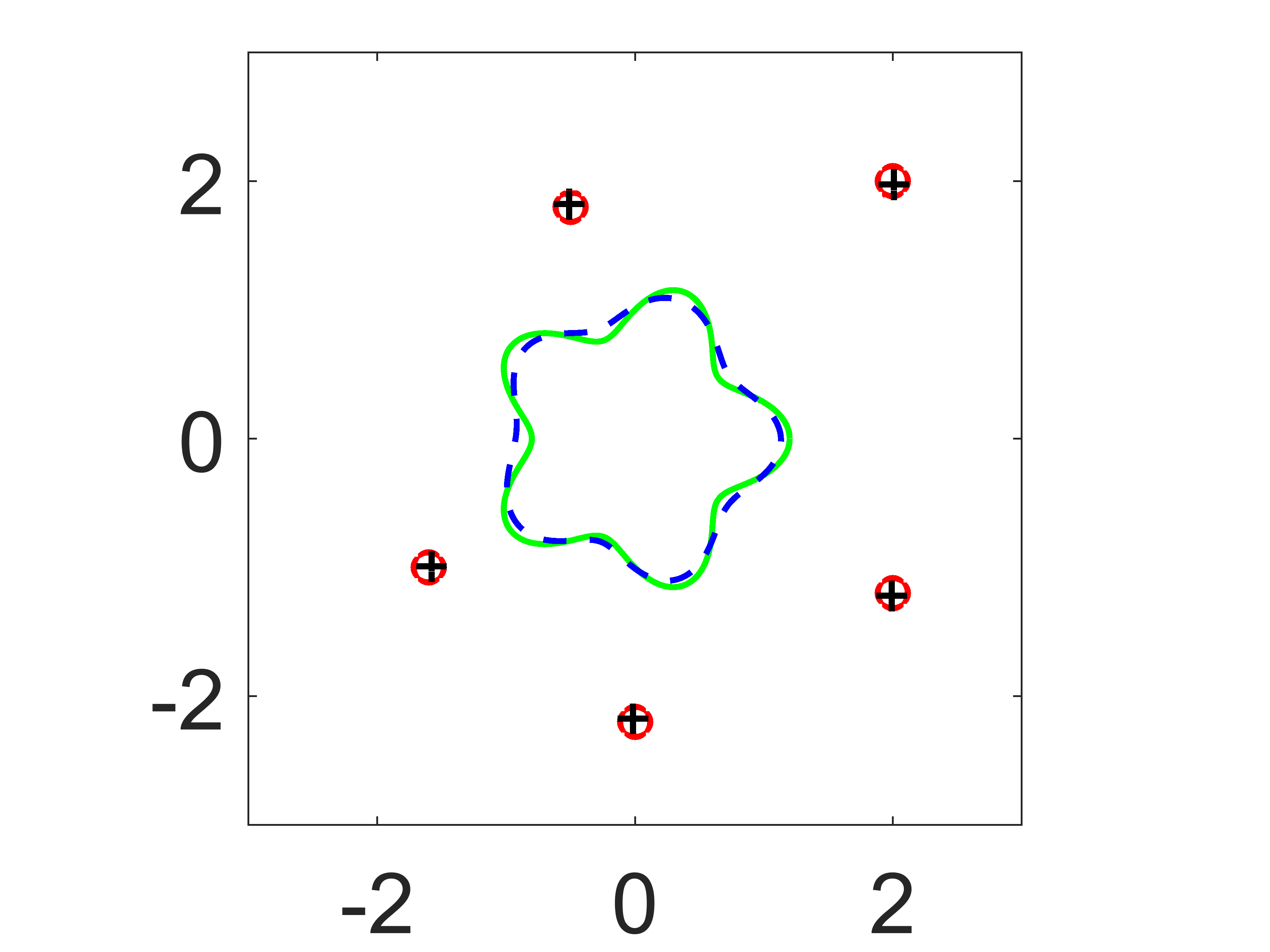}}
		\caption{Geometry setting and the reconstruction of the starfish and $S_1$. (a) Problem geometry; (b) exact and reconstructed locations of the source points by the DSM; (c) image of $I_D(y)$; (d) reconstruction by the optimization method.}
		\label{fig:starfish2}
	\end{figure}
		
	\begin{figure}[htpb]
		\centering
		\subfigure[]{\includegraphics[width=0.3\linewidth]{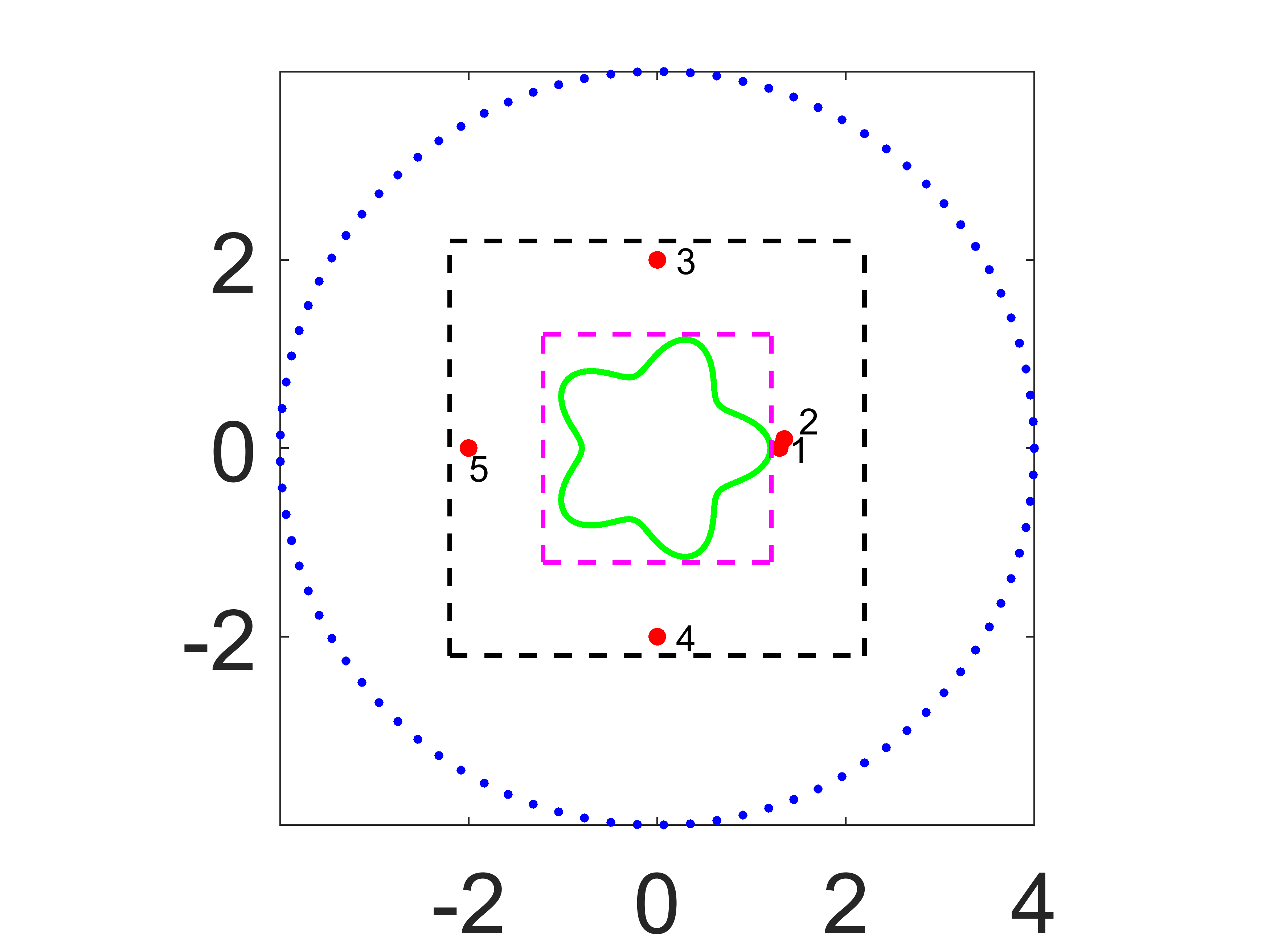}}
		\subfigure[]{\includegraphics[width=0.3\linewidth]{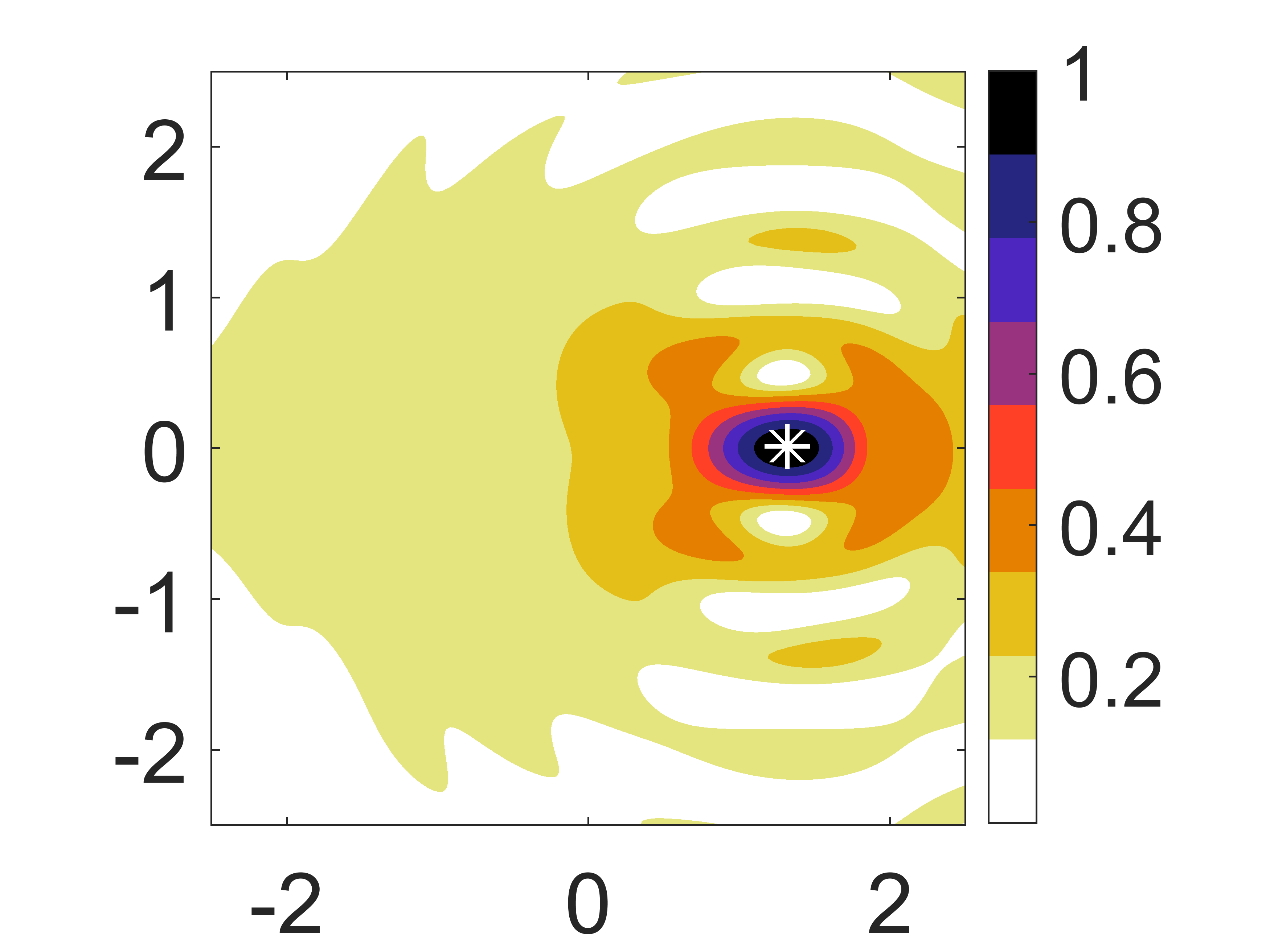}}
		\subfigure[]{\includegraphics[width=0.3\linewidth]{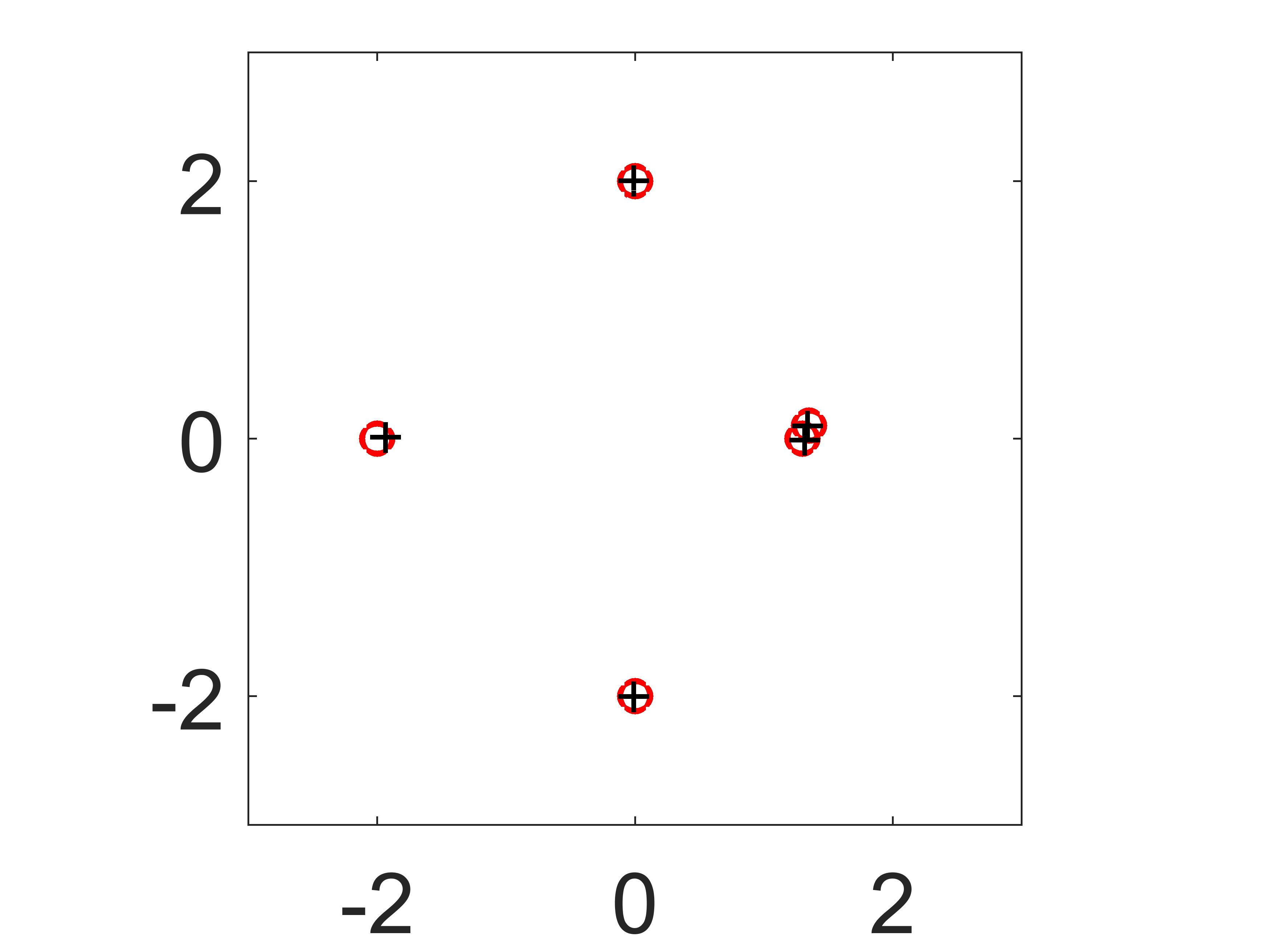}}\\
		\subfigure[]{\includegraphics[width=0.3\linewidth]{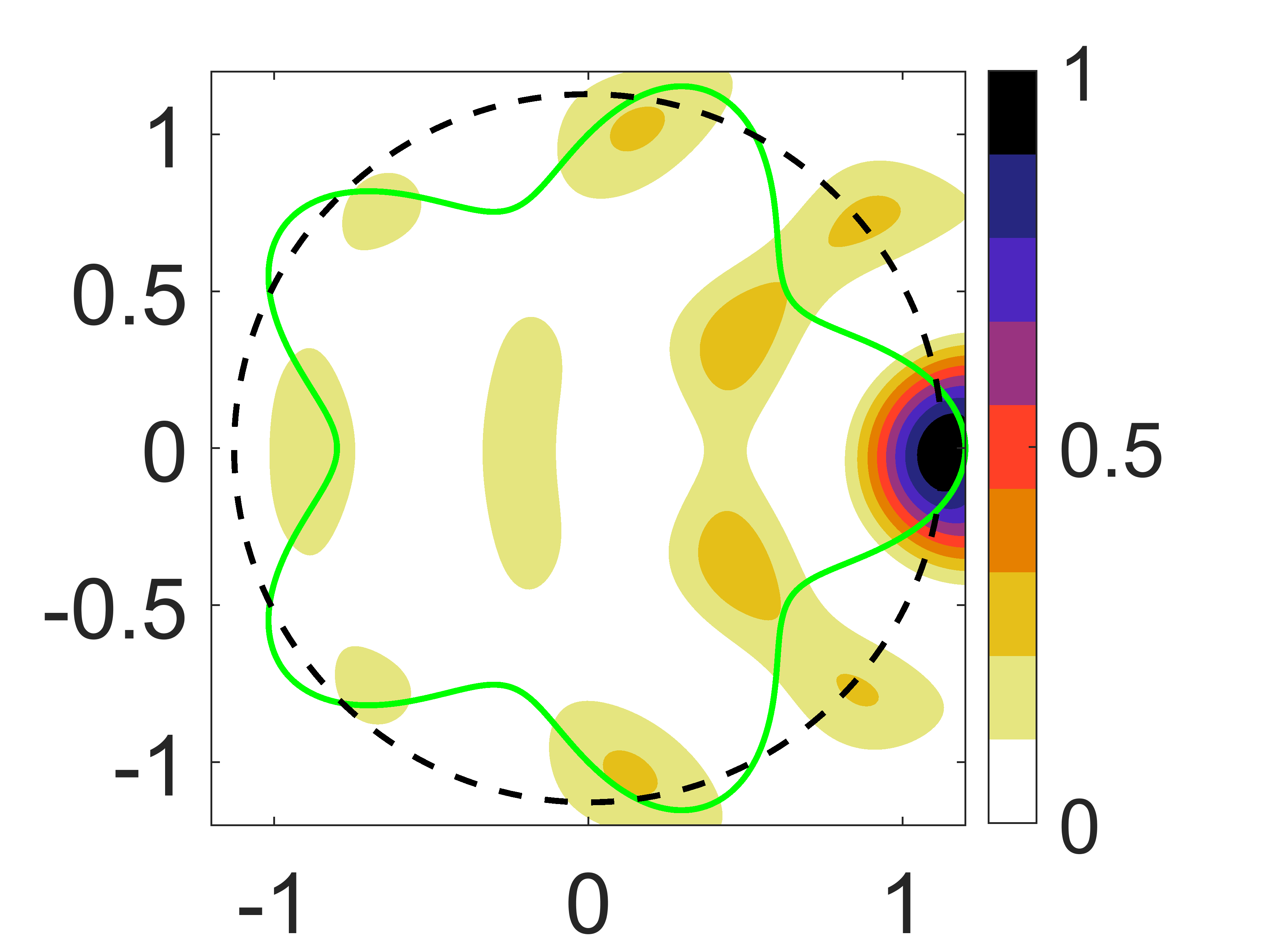}}
		\subfigure[]{\includegraphics[width=0.3\linewidth]{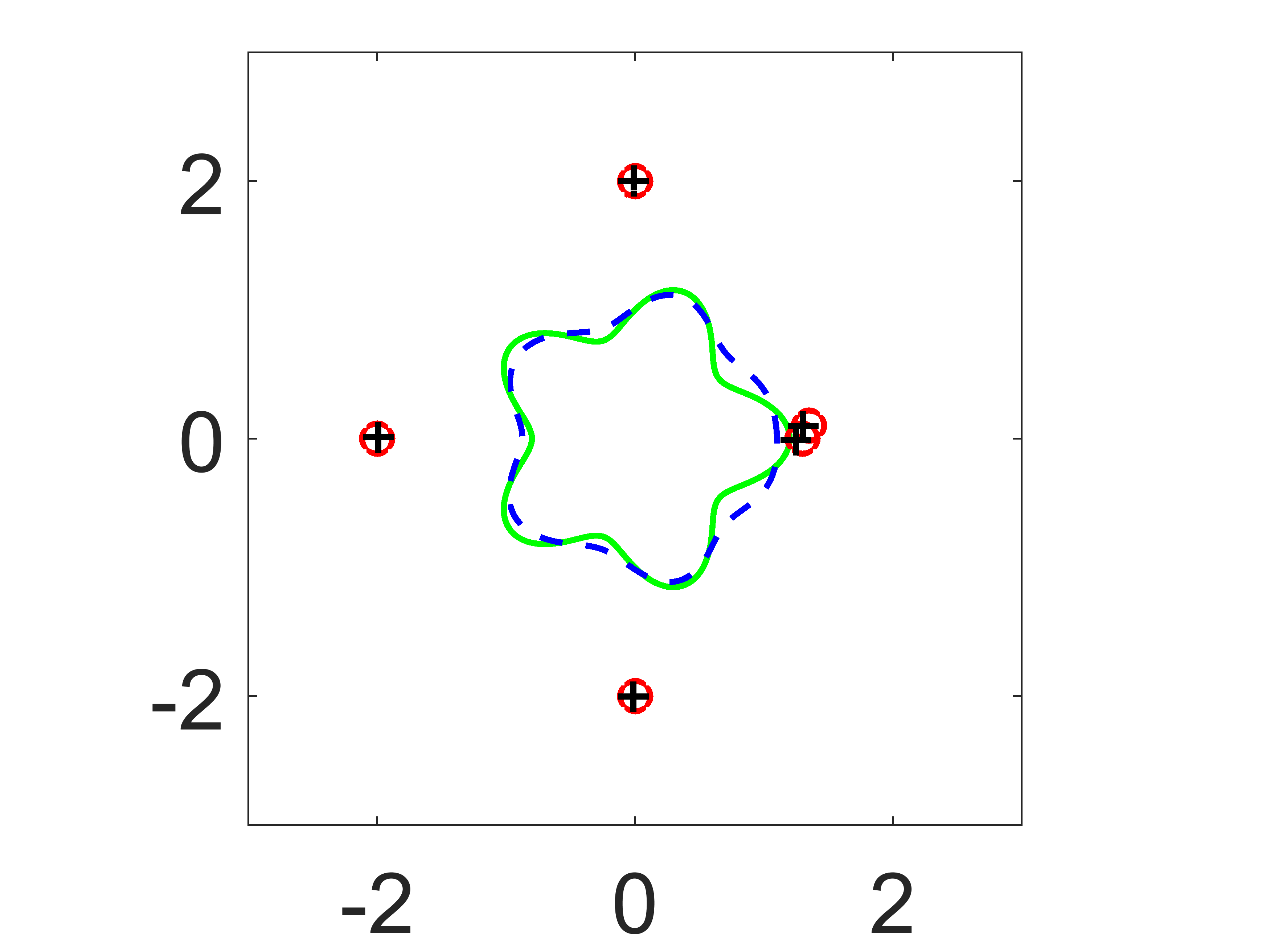}}
		\subfigure[]{\includegraphics[width=0.3\linewidth]{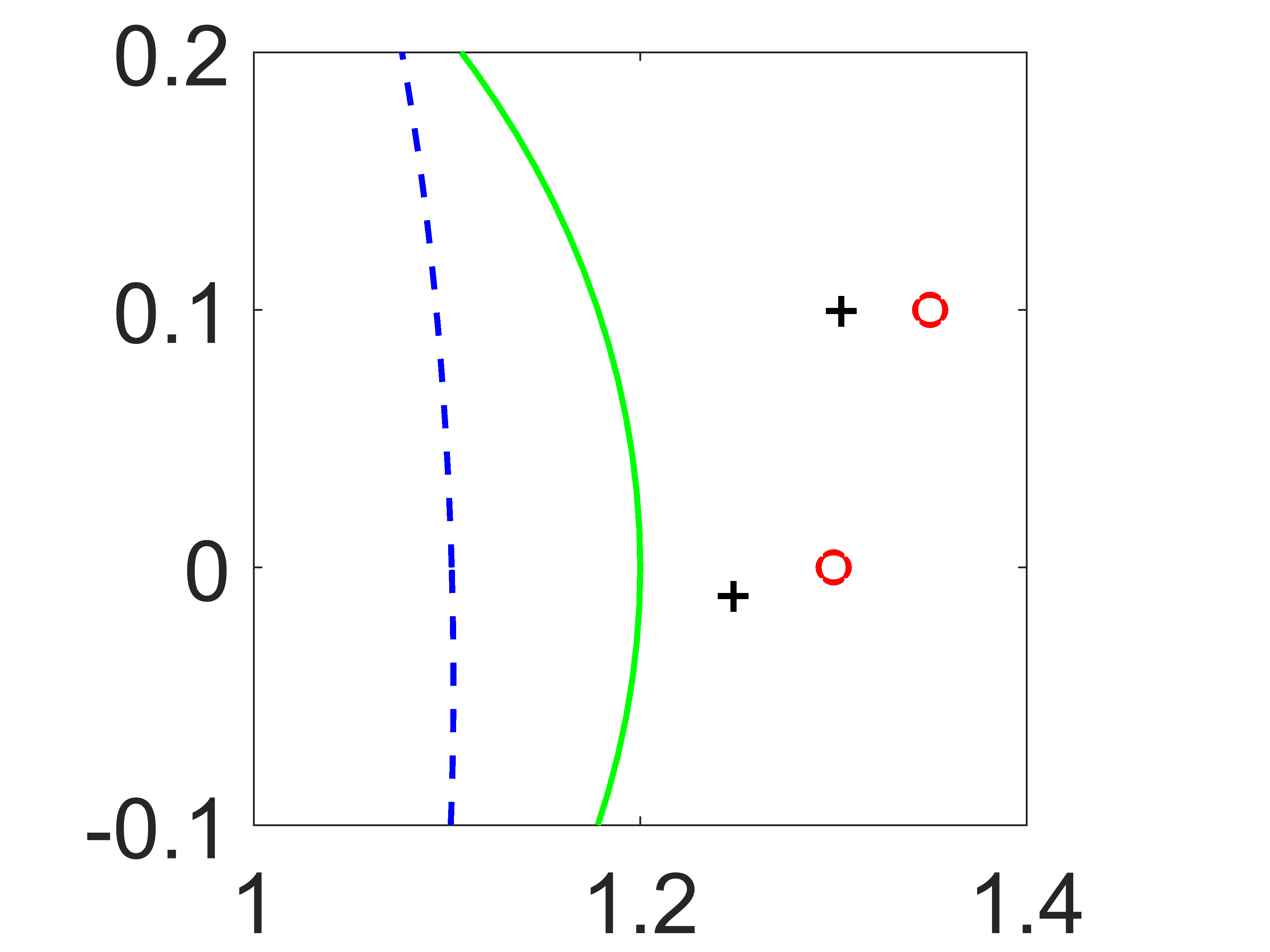}}
		\caption{Geometry setting and the reconstruction of the starfish and $S_2$. (a) Problem geometry; (b) image of $I_1(y)$ (white star: exact location of the first source point); (c) exact and reconstructed locations of the source points with $I_j(y), j=1,\cdots5$; (d) image of $I_D(y)$; (e) reconstruction by the optimization method; (f) optimization result in a local domain.}
		\label{fig:starfish1}
	\end{figure}
	
	Next we consider the reconstruction from a different set of source points, which is given by $
	S_2=\{(1.3,0), (1.35,0.1), (0,2), (0,-2), (-2,0)\}$. In this case, the sampling domains are adapted to be $\Omega_1=[-2.2,2.2]\times[-2.2,2.2],$ $\Omega_2=[-1.2,1.2]\times[-1.2,1.2]$. The other parameters remain unchanged. We refer to Figure \ref{fig:starfish1}(a) for the geometry setting.
			
	Figure \ref{fig:starfish1}(b) shows the indicator $I_1(y)$ over the sampling domain $\Omega_1$. One can observe that the maximizer of $I_1(y)$ coincides well with the exact source location. Figure \ref{fig:starfish1}(c) shows that all the reconstructed source locations match well with the exact positions. 	In Figure \ref{fig:starfish1}(d), the imaging function $I_D(y)$ is plotted. From Figure \ref{fig:starfish1}(d), we find that the maximizer of $I_D(y)$ incurs on the boundary of the obstacle.  
	
	Compared with Figure \ref{fig:starfish2}(c), it seems that the current result deteriorates slightly. A reason accounting for this fact is the specific distribution of the source points where two points are almost adjacent to each other.
	In Figure \ref{fig:starfish1}(e), we show the reconstruction of the starfish by the optimization method. Figure \ref{fig:starfish1}(e) demonstrates that the quality of reconstruction for the left side of the obstacle is better than that of the right side. Physically speaking, this is due to the fact that the right side of the obstacle is affected by the nearby source points 1 and 2. Moreover, the adjacent source points 1 and 2 could be individually identified as long as the sampling grid is sufficiently fine. To see the local reconstruction more clearly, we intercept a part $[1, 1.4]\times[-0.1, 0.2]$ of Figure \ref{fig:starfish1}(e) and it is zoomed in as Figure \ref{fig:starfish1}(f). 
	
	In Table \ref{tab:star_5_location}, we listed the exact and reconstructed source locations. To evaluate the reconstruction of the obstacle quantitatively, we compute the errors between the exact boundary curve and the reconstructed boundary curve as \eqref{eq:error1} with $\tau_i=t_i$. In this case, the error of the boundary curve is $E_D=7.90\%$.
		
	Comparing the above two cases, it can be found that our method could simultaneously  produce a rather accurate reconstruction of the obstacle and sources, no matter whether the source points are relatively sparse or dense, or even quite close to the obstacle.
	
			\begin{table}[htpb]
				\begin{center}
					\caption{Reconstruction of the source points $S_1$.}
					\begin{tabular}{cccc}
						\toprule
						& ~~~~~~~\multirow{2}{2cm}{Exact Location} & \multicolumn{2}{c}{Reconstructed Location} \\
						\cmidrule(r){3-4} &                                          &       DSM       &   Optimization   \\ \midrule
						Point 1      &                 $(2, 2)$                 &  $(1.98,1.98)$  &  $(2.01,1.98)$   \\
						Point 2      &              $(-0.5, 1.8)$               & $(-0.52,1.83)$  &  $(-0.50,1.83)$  \\
						Point 3      &               $(-1.6, -1)$               & $(-1.58,-0.98)$ & $(-1.56,-0.98)$  \\ 
						Point 4      &               $(0, -2.2)$                & $(0.02,-2.19)$  & $(-0.01,-2.19)$  \\ 
						Point 5      &               $(2, -1.2)$                & $(2.03,-1.23)$  &  $(1.99,-1.23)$  \\ \bottomrule
					\end{tabular}
					\label{tab:star_5'_location}
				\end{center}
			\end{table}		
			
	In Figure \ref{fig:starfish_different}, we consider the reconstruction with $N(=2,4,6,8)$ source points. The wave number and the regularization parameter is chosen to be $k=8$ and $\alpha=10^{-6}$, respectively.
	The sampling domains are chosen to be $\Omega_1=[-2.5,2.5]\times[-2.5,2.5],$ $\Omega_2=[-1.5,1.5]\times[-1.5,1.5].$ Further, we take $k=5,8,$ respectively, and list the relative $L^2$ errors for the reconstruction of the starfish-shaped obstacle in Table \ref{tab:error_starfish}. Table \ref{tab:error_starfish} shows that the error of reconstruction decreases as the number of source points increases.
	
	\begin{table}[htpb]
		\begin{center}
			\caption{Reconstruction of the source points $S_2$.} \label{tab:star_5_location}
			\begin{tabular}{ccc}
				\toprule
				& Exact source  & Reconstruction  \\ \midrule
				Point 1 &  $(1.3, 0)$   &  $(1.25,0.02)$  \\
				Point 2 & $(1.35, 0.1)$ &  $(1.30,0.07)$  \\
				Point 3 &   $(0, 2)$    & $(-0.00,1.98)$  \\ 
				Point 4 &   $(0, -2)$   & $(-0.00,-1.98)$ \\ 
				Point 5 &   $(-2, 0)$   & $(-1.99,0.02)$  \\ \bottomrule
			\end{tabular}
		\end{center}
	\end{table}
			
			\begin{table}[htpb]
				\caption{Relative $L^2$ errors for reconstructions of the starfish.}
				\label{tab:error_starfish}
				\centering
				\begin{tabular}{ccccc}
					\toprule
					$N$  &    2     &    4     &    6     &    8     \\ \midrule
					$k=5$ & $8.60\%$ & $7.45\%$ & $8.01\%$ & $5.84\%$ \\
					$k=8$ & $15.46\%$ & $6.63\%$ & $5.93\%$ & $4.68\%$ \\ \bottomrule
				\end{tabular}
			\end{table}

			\begin{figure}[htpb]
				\centering
				\subfigure[]{\includegraphics[width=0.24\linewidth]{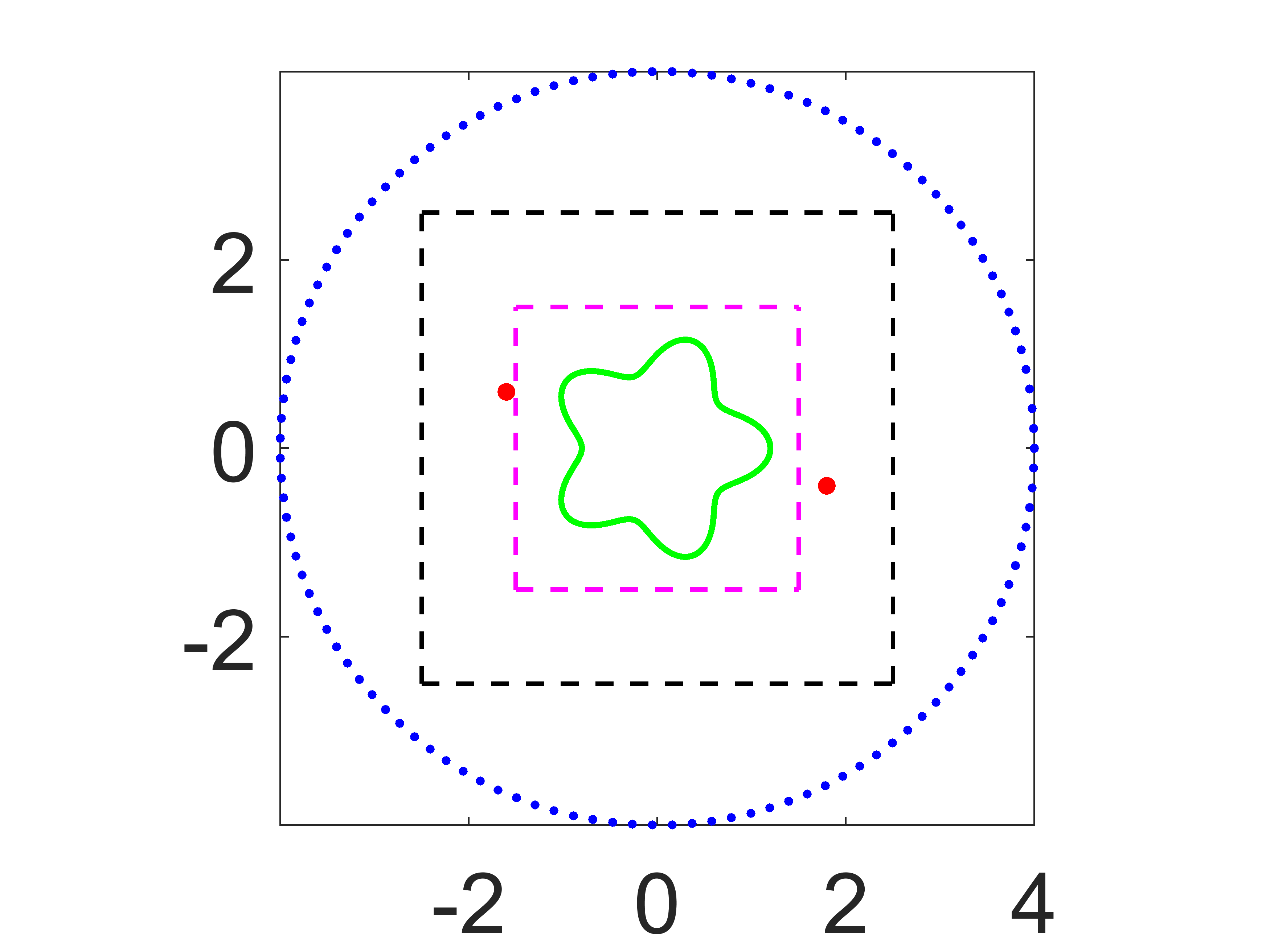}}
				\subfigure[]{\includegraphics[width=0.24\linewidth]{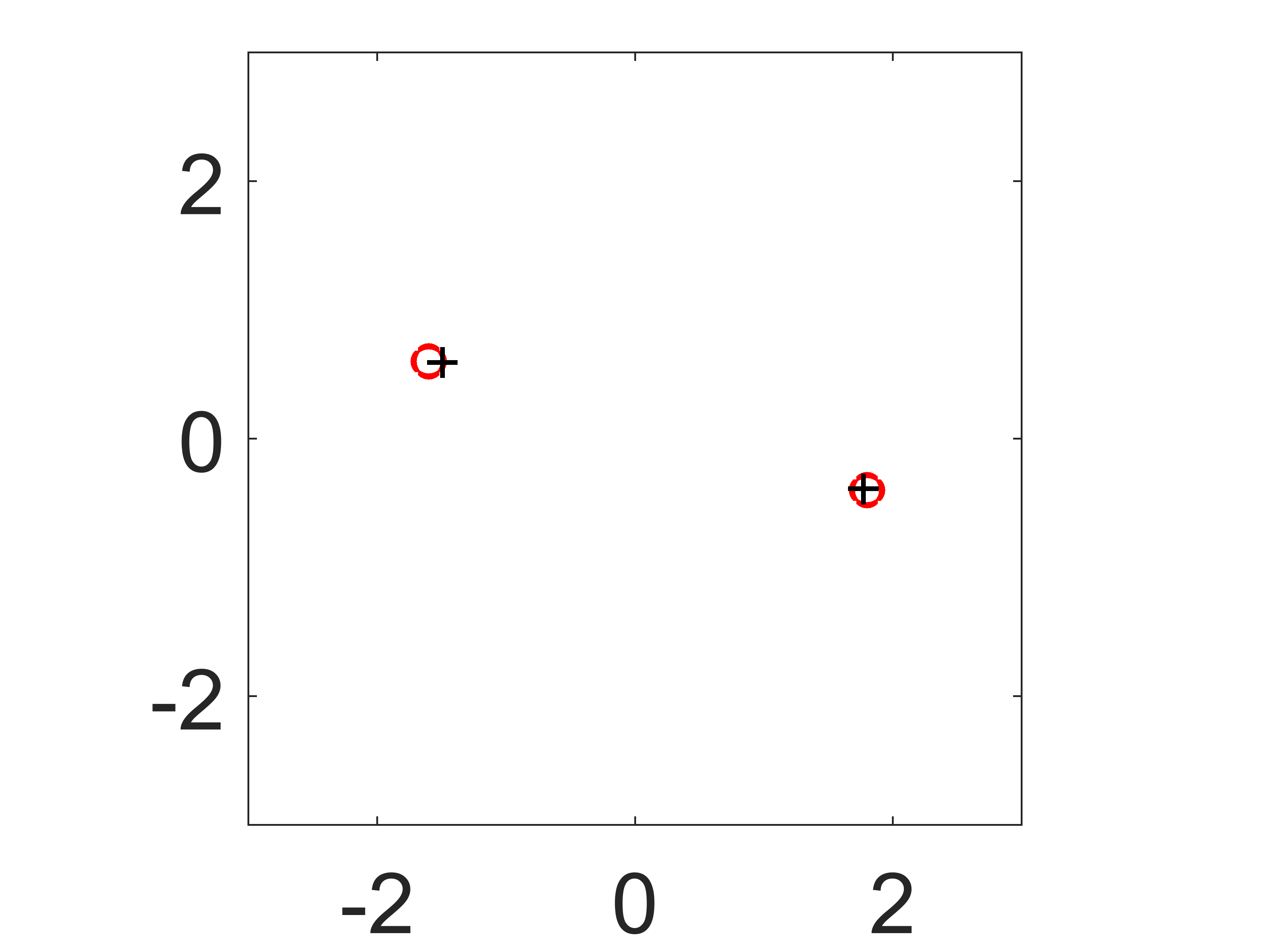}}
				\subfigure[]{\includegraphics[width=0.24\linewidth]{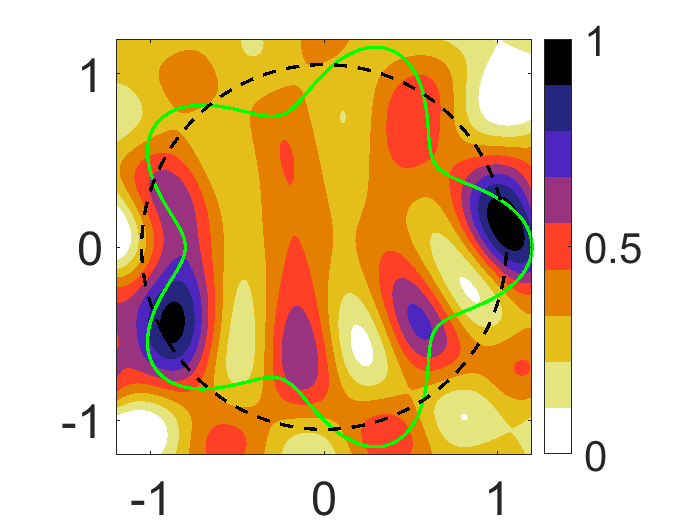}}
				\subfigure[]{\includegraphics[width=0.24\linewidth]{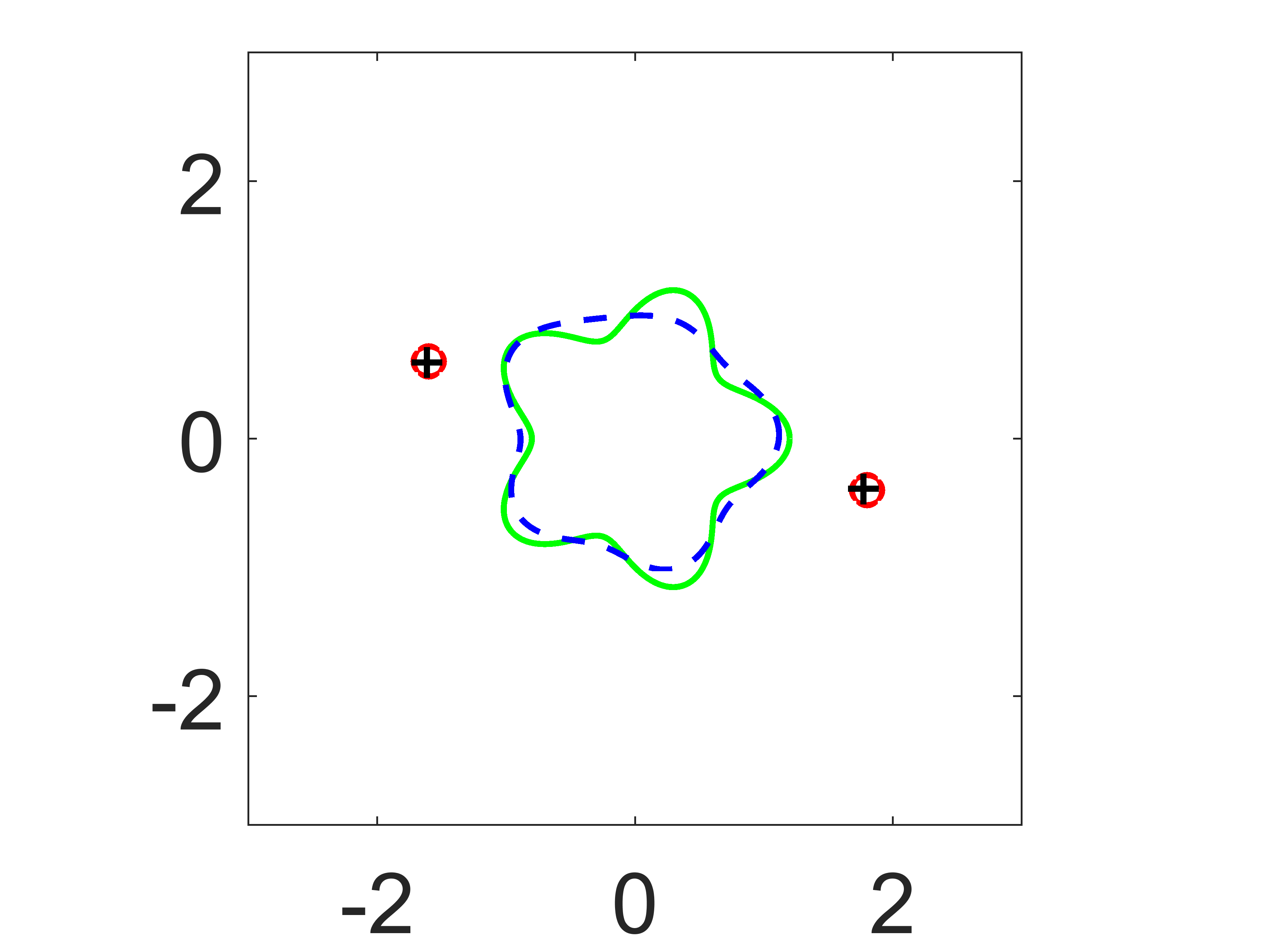}}

				\subfigure[]{\includegraphics[width=0.24\linewidth]{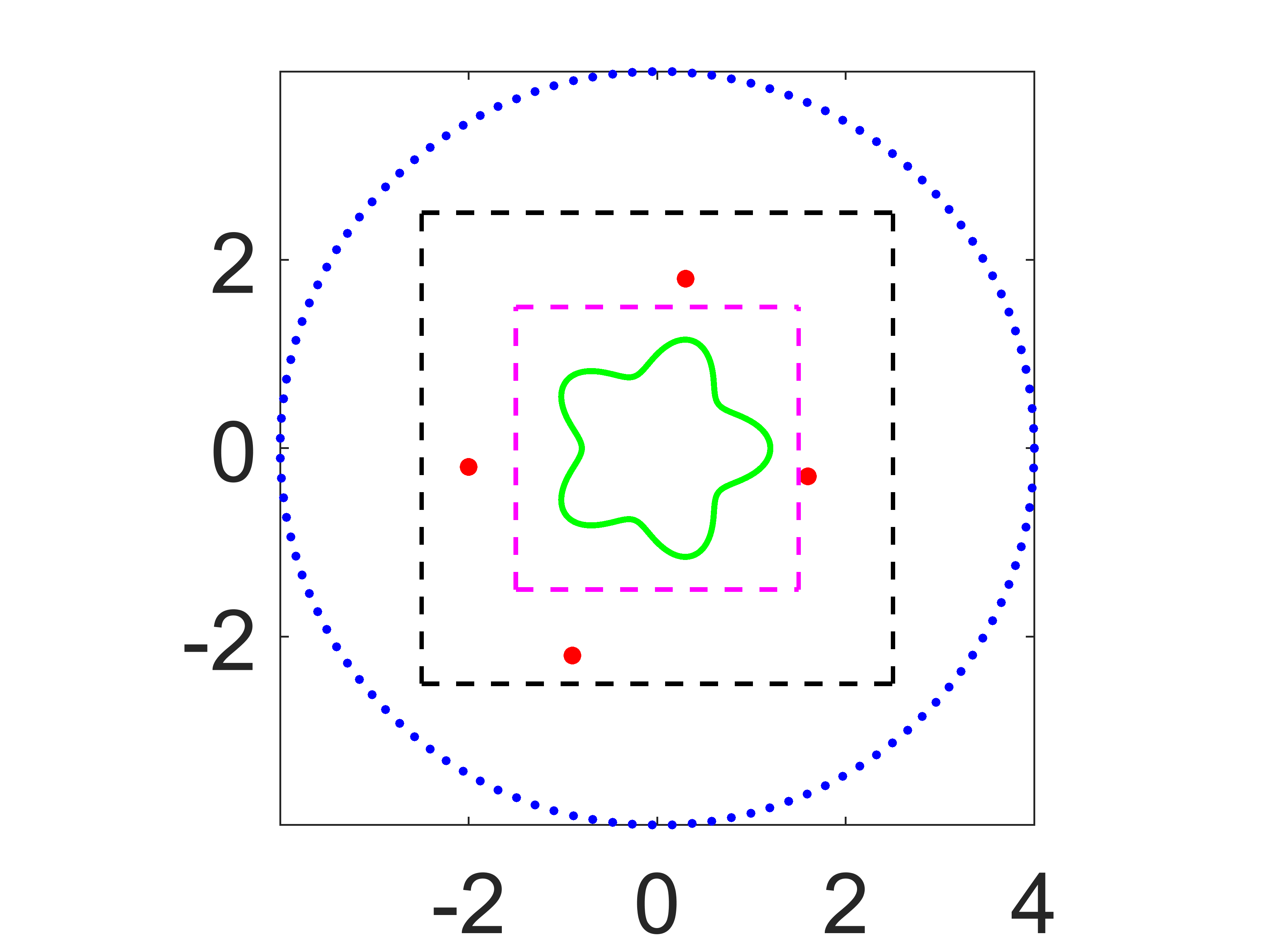}}
				\subfigure[]{\includegraphics[width=0.24\linewidth]{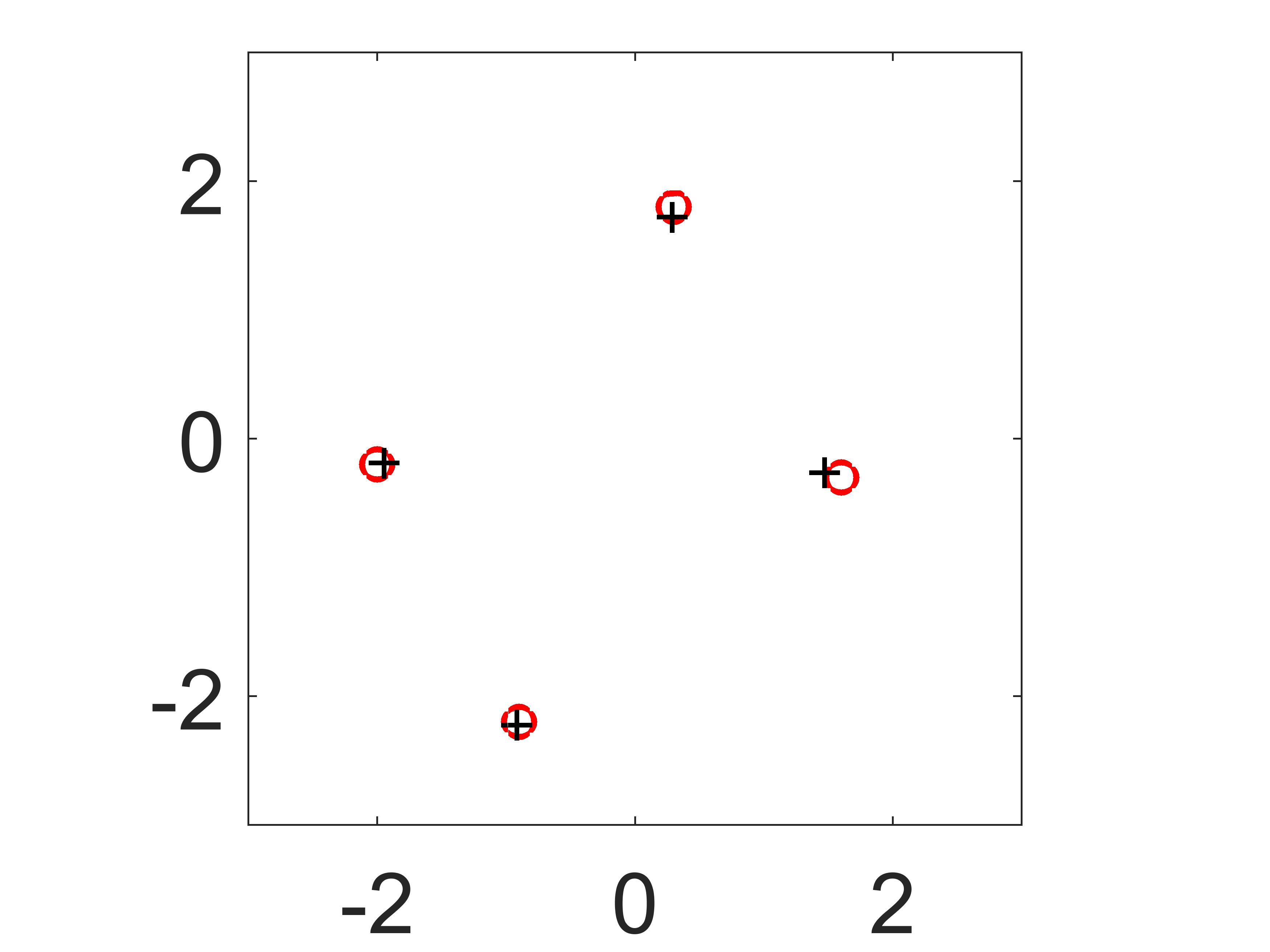}}
				\subfigure[]{\includegraphics[width=0.24\linewidth]{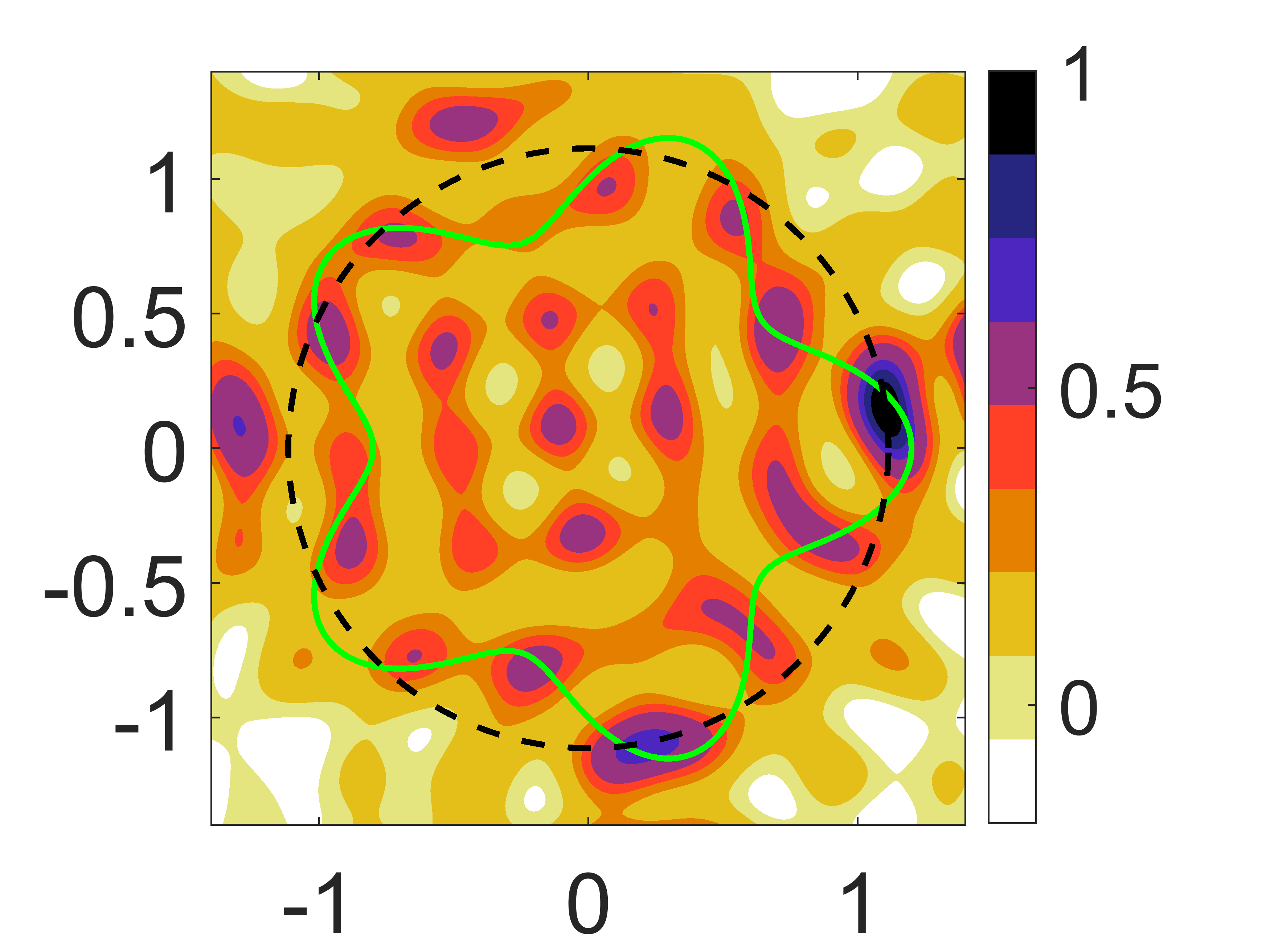}}
				\subfigure[]{\includegraphics[width=0.24\linewidth]{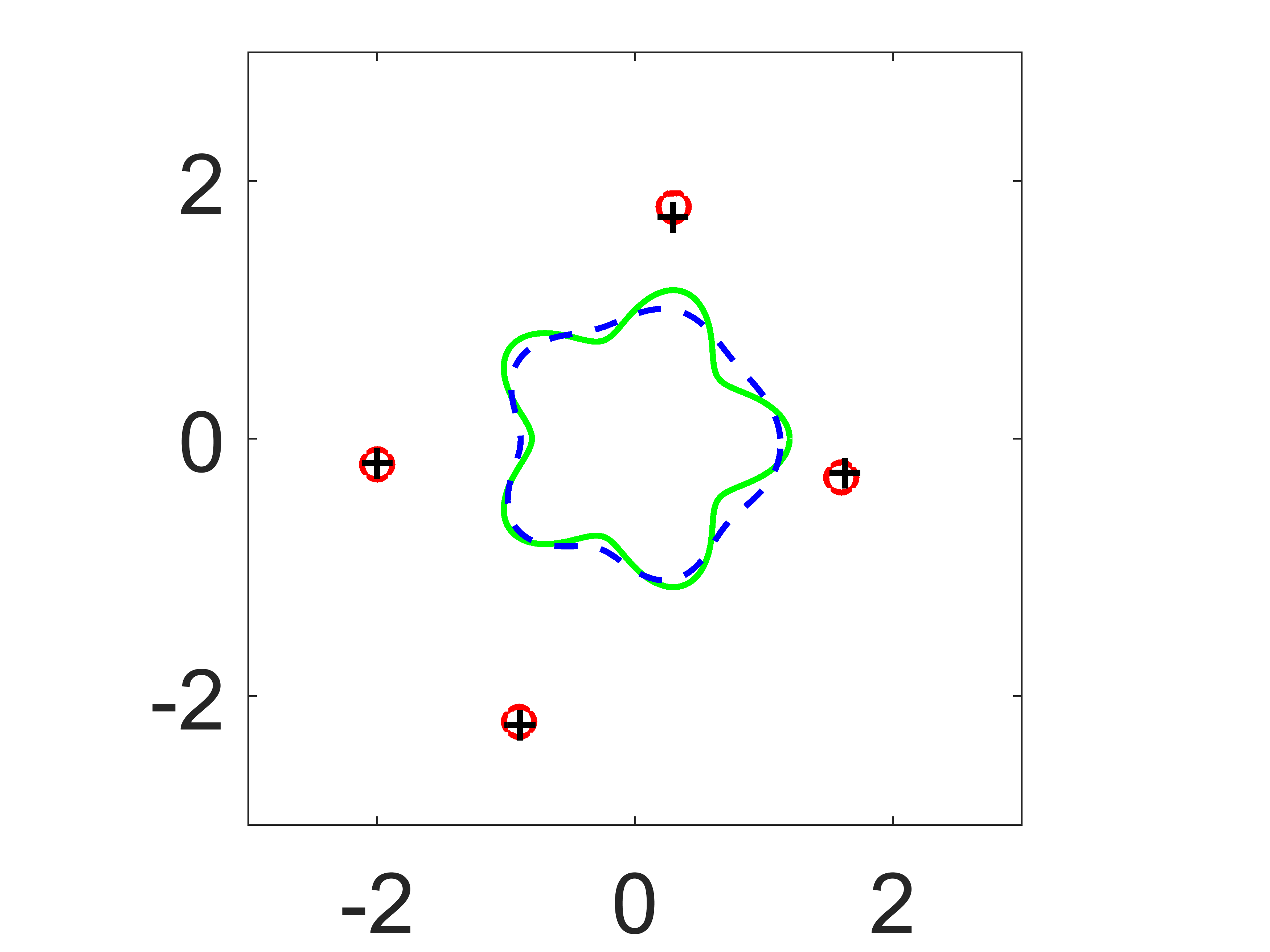}}

				\subfigure[]{\includegraphics[width=0.24\linewidth]{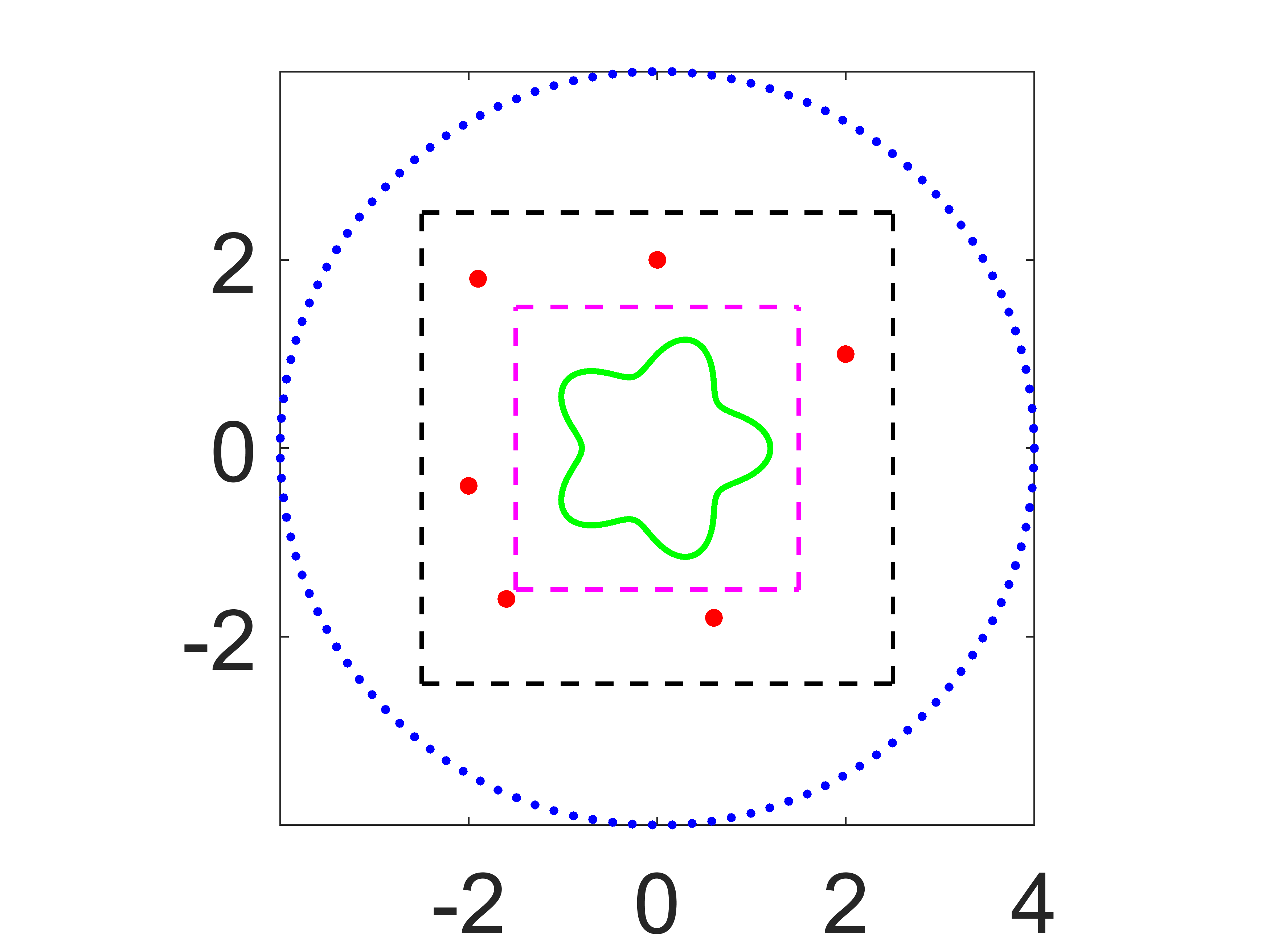}}
				\subfigure[]{\includegraphics[width=0.24\linewidth]{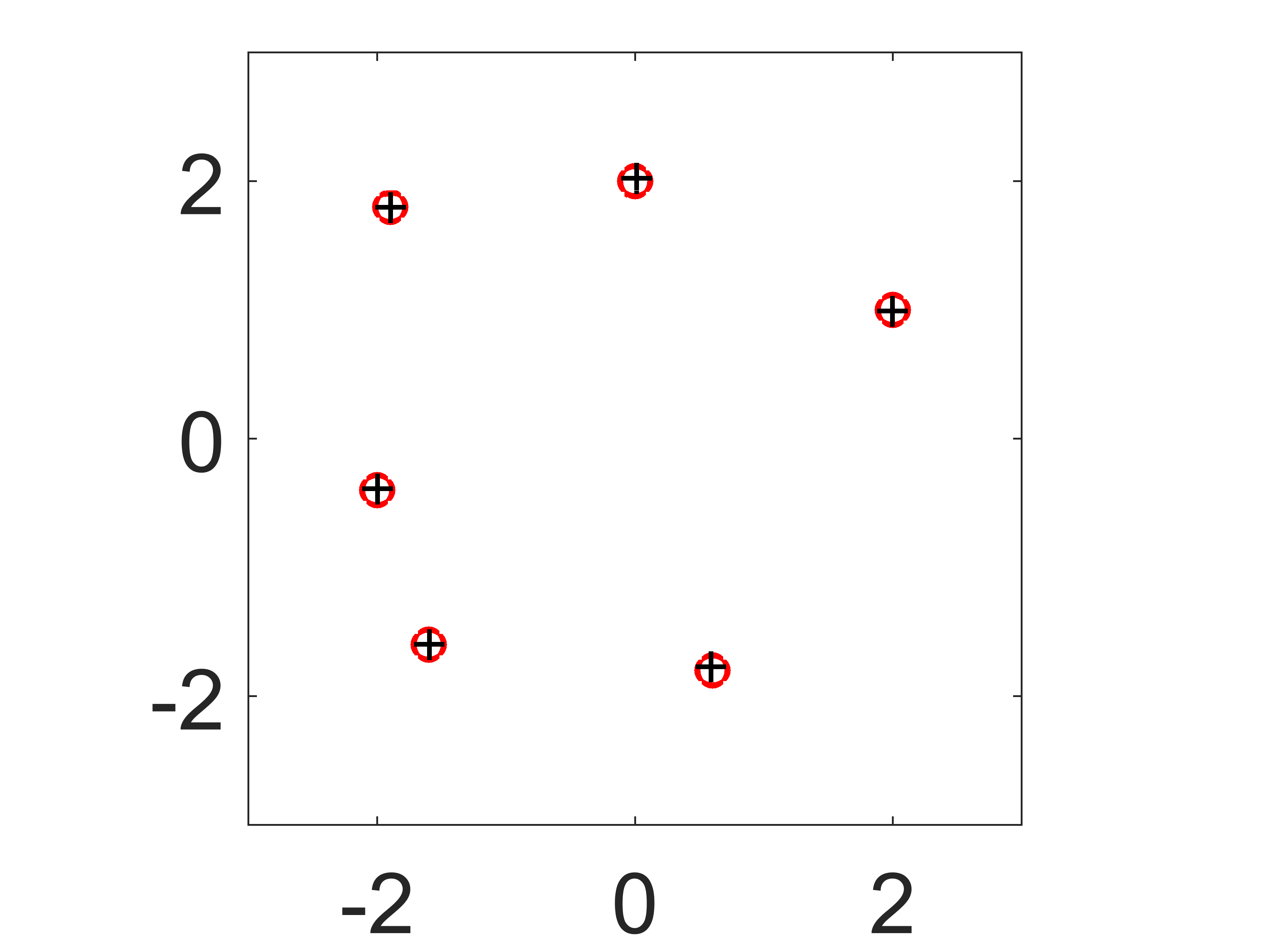}}
				\subfigure[]{\includegraphics[width=0.24\linewidth]{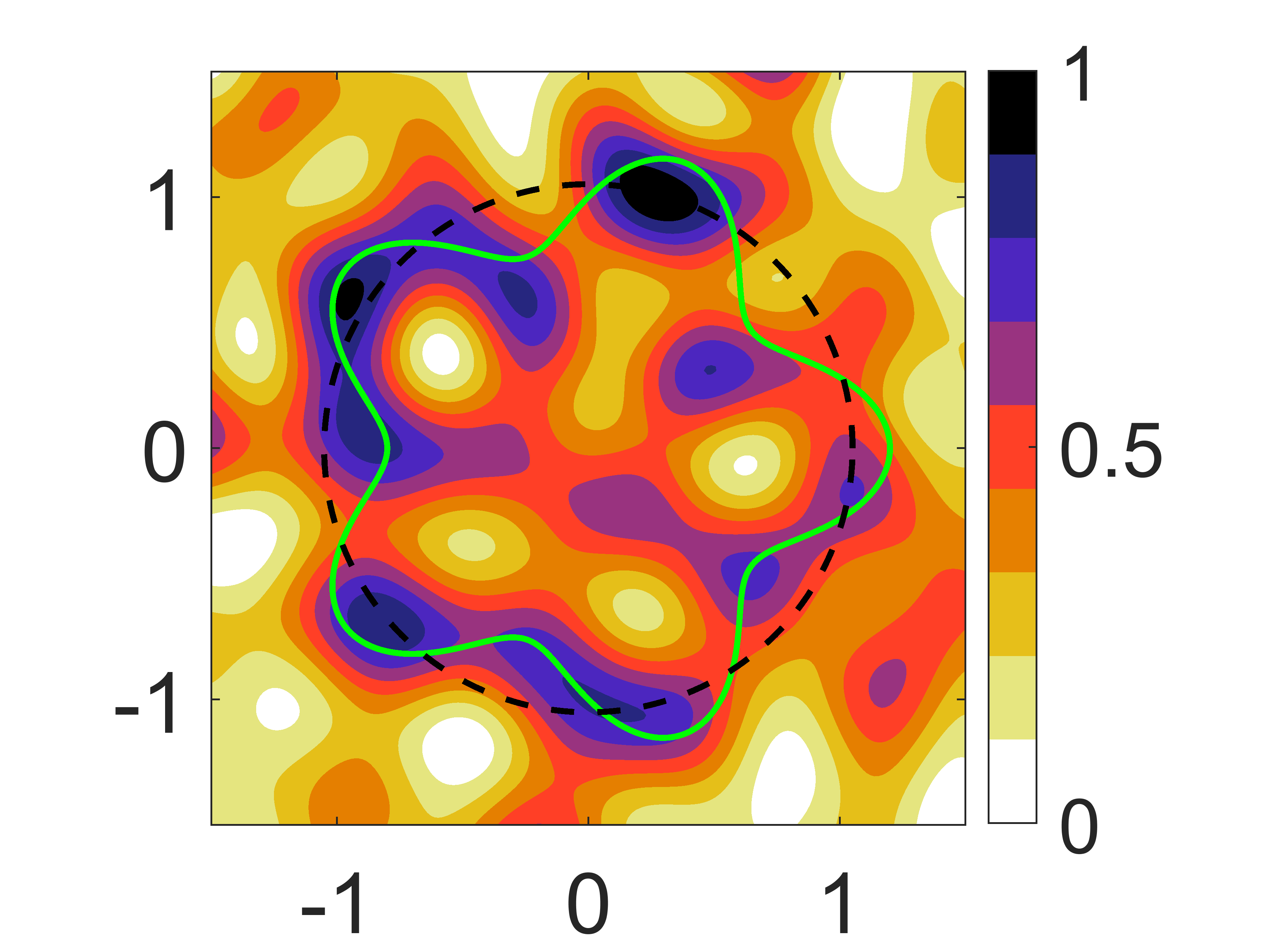}}
				\subfigure[]{\includegraphics[width=0.24\linewidth]{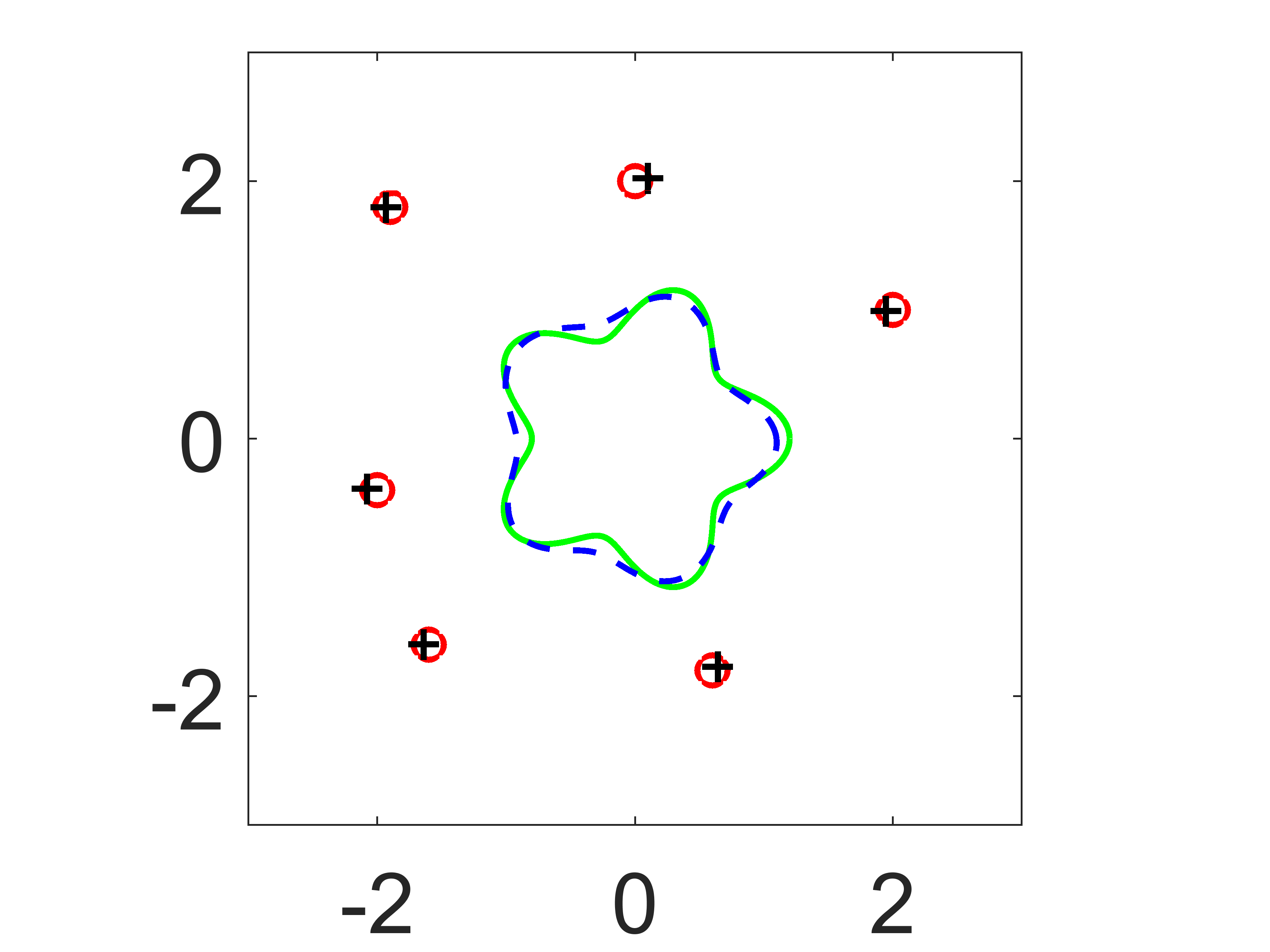}}

				\subfigure[]{\includegraphics[width=0.24\linewidth]{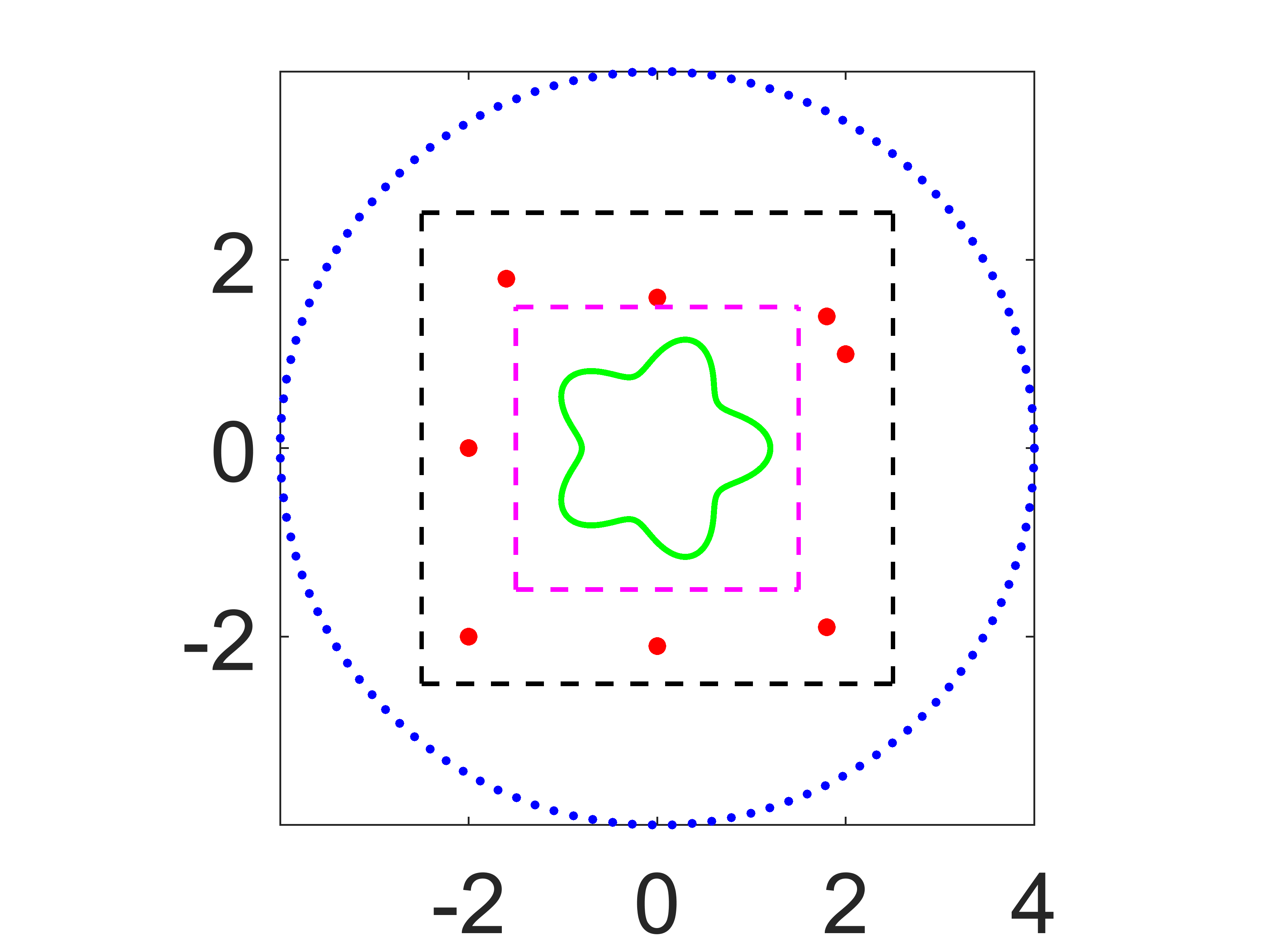}}
				\subfigure[]{\includegraphics[width=0.24\linewidth]{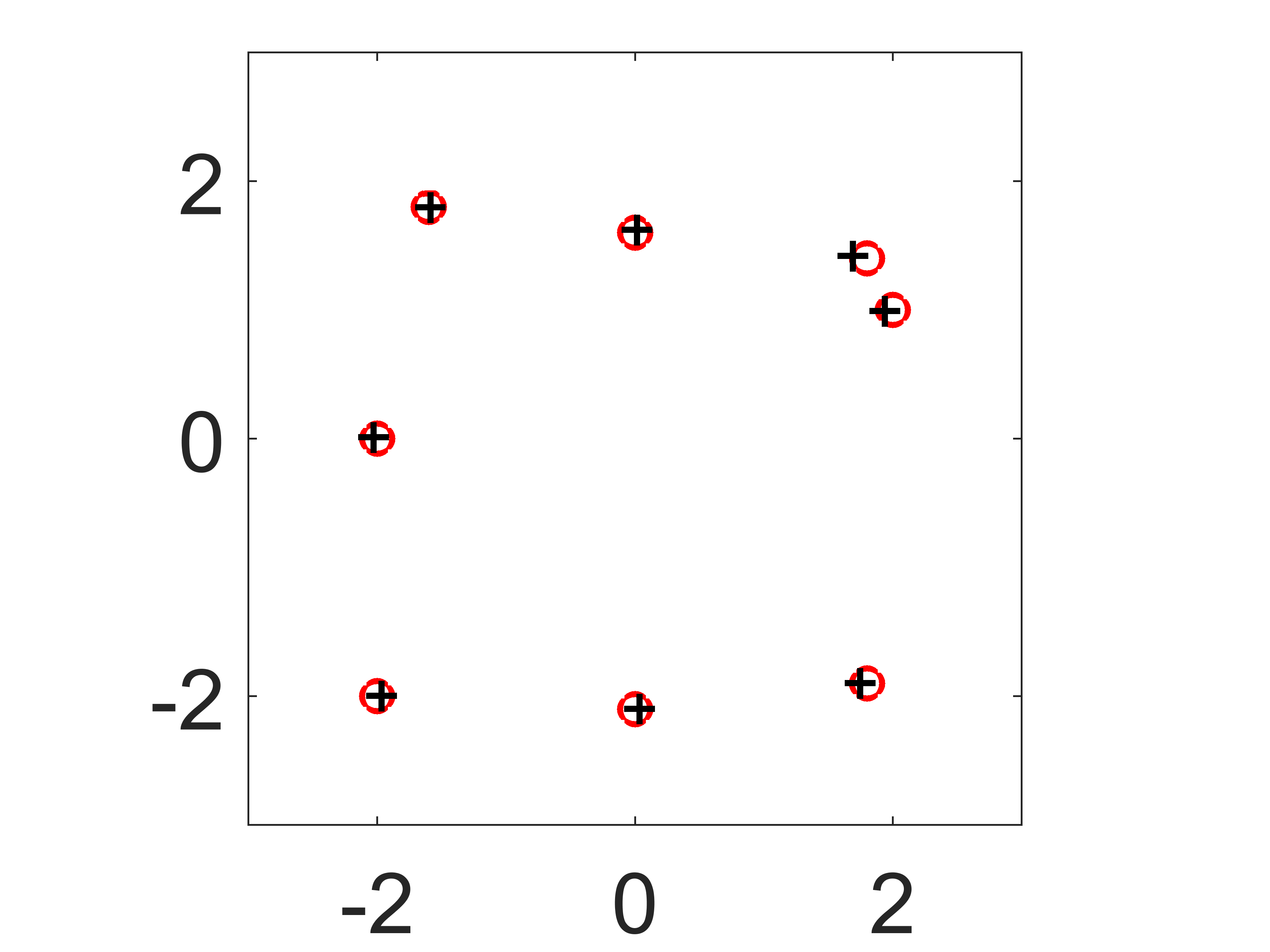}}
				\subfigure[]{\includegraphics[width=0.24\linewidth]{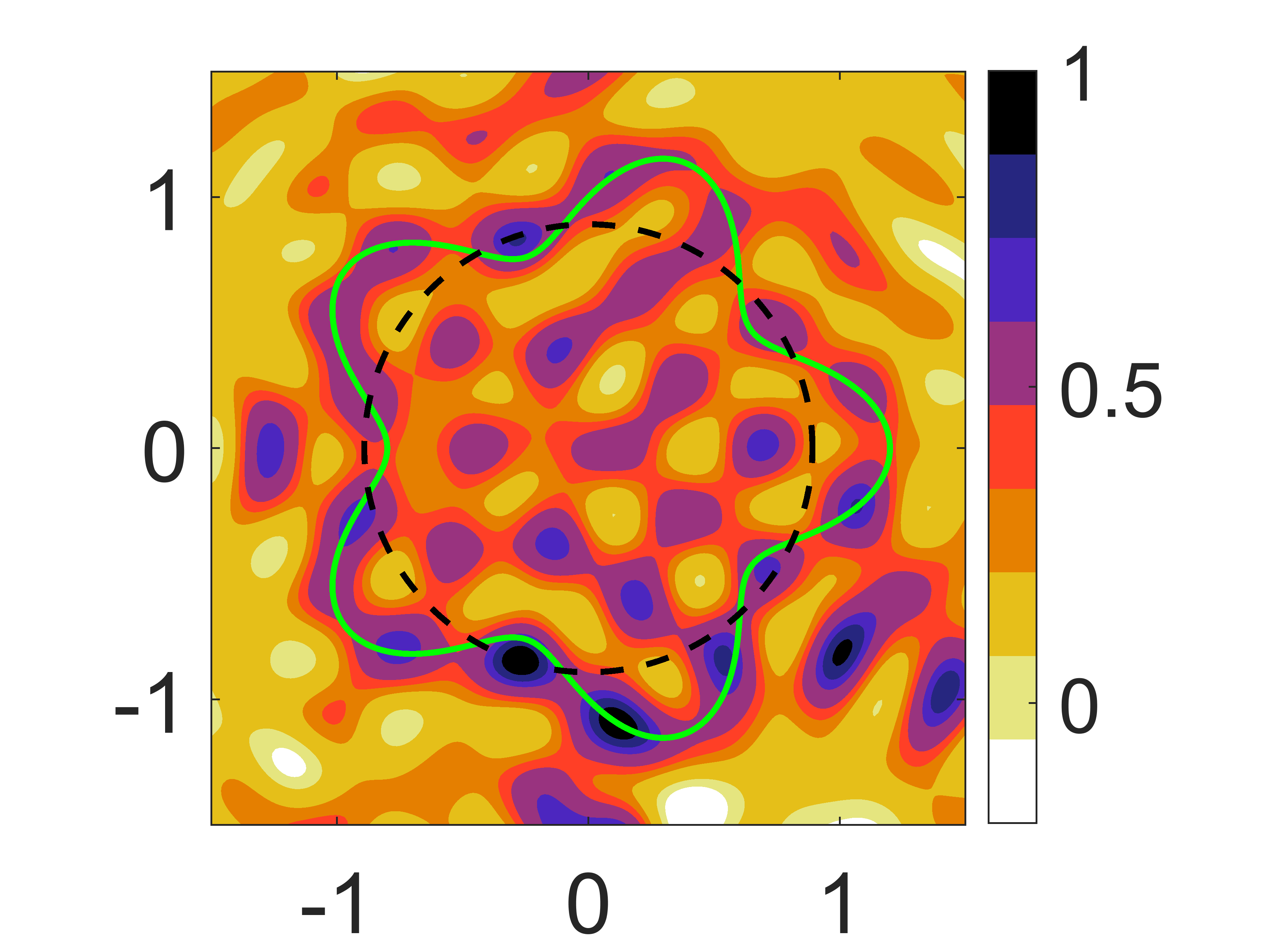}}
				\subfigure[]{\includegraphics[width=0.24\linewidth]{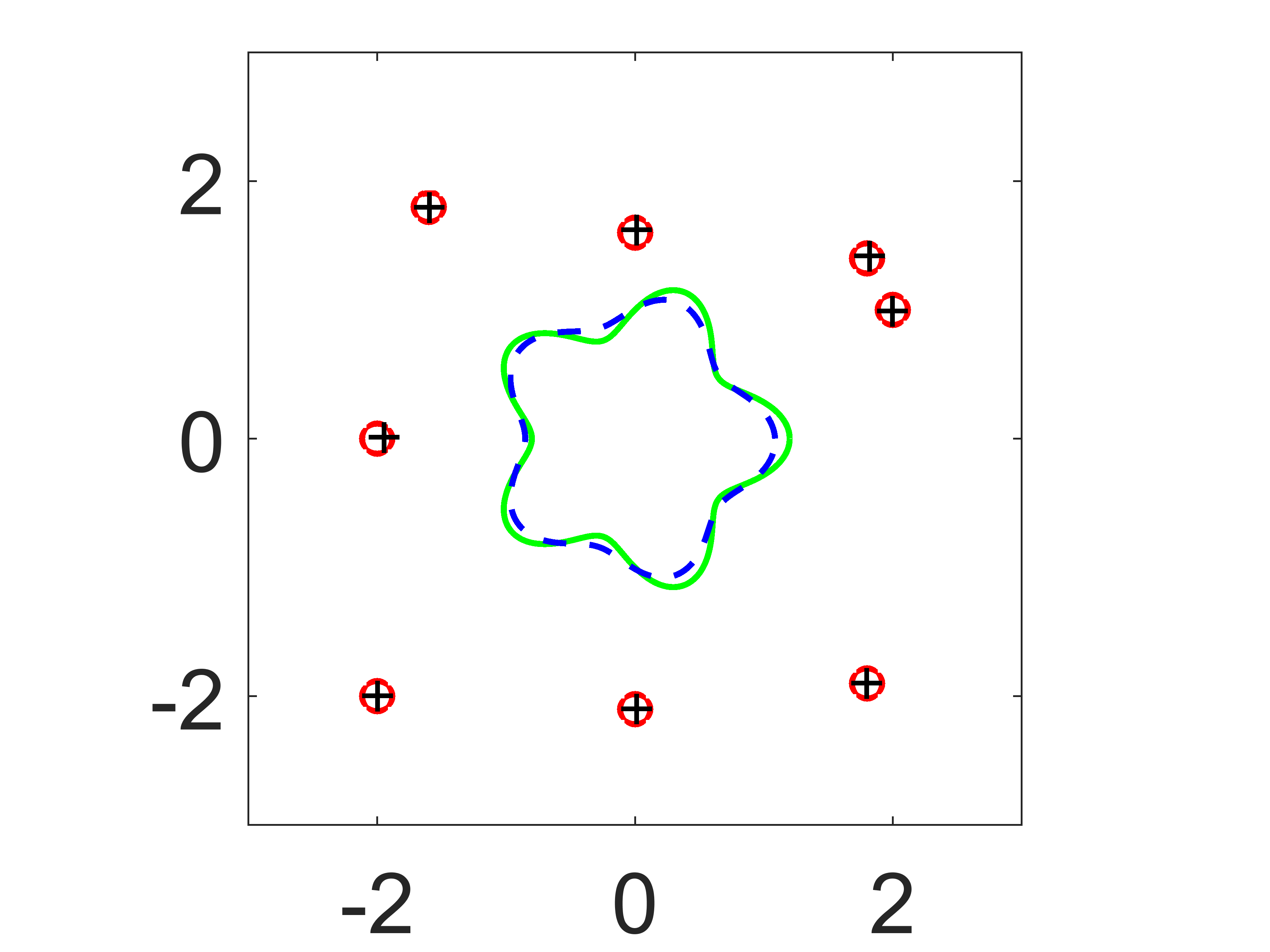}}
				\caption{Reconstruction of the starfish and different number of source points. The 1st column: problem geometry; the 2nd column: exact and reconstructed locations of the source points with DSM; the 3rd column: images of $I_D(y)$; the 4th column: reconstruction by the optimization method.} \label{fig:starfish_different}
			\end{figure}
		\end{example}

	\begin{example}\label{example2}
	In this example, we aim to investigate the accuracy of reconstruction by considering a circular obstacle as well as different number of source points. The parametric boundary of the obstacle is given by $x_C(t)=(\cos t,\sin t),\ 0\leq t< 2\pi,$ and $\alpha=10^{-8},$ $\Omega_1=[-2.5,2.5]\times[-2.5,2.5],$ $\Omega_2=[-1.2,1.2]\times[-1.2,1.2].$ 
			
	To analyze the reconstruction quantitatively, the relative errors of the boundary curves are computed as \eqref{eq:error1} with $\tau_i=t_i$.  Table \ref{tab:circle_error} shows the reconstructions in different cases by varying the wave numbers and the number of source points. In particular, a suitable choice of the parameters (for instance, $k=8$ and $N=6$) would produce a stunningly accurate reconstruction. Figure \ref{fig:circle} exhibits the reconstruction of the circle and $N(=2,4,6,8)$ source points with wave number $k=5$. 
			\begin{table}[htpb]
				\caption{Relative $L^2$ errors for reconstructions of the circle with different numbers of source points.}
				\label{tab:circle_error}
				\centering
				\begin{tabular}{ccccc}
					\toprule
					$N$  &    2     &    4     &    6     &    8     \\ \midrule
					$k=5$ & $7.32\%$ & $2.30\%$ & $2.47\%$ & $1.24\%$ \\
					$k=8$ & $5.44\%$ & $1.38\%$ & $0.72\%$ & $0.76\%$ \\ \bottomrule
				\end{tabular}
			\end{table}		
	\begin{figure}[htpb]
		\centering
		\subfigure[]{\includegraphics[width=0.24\linewidth]{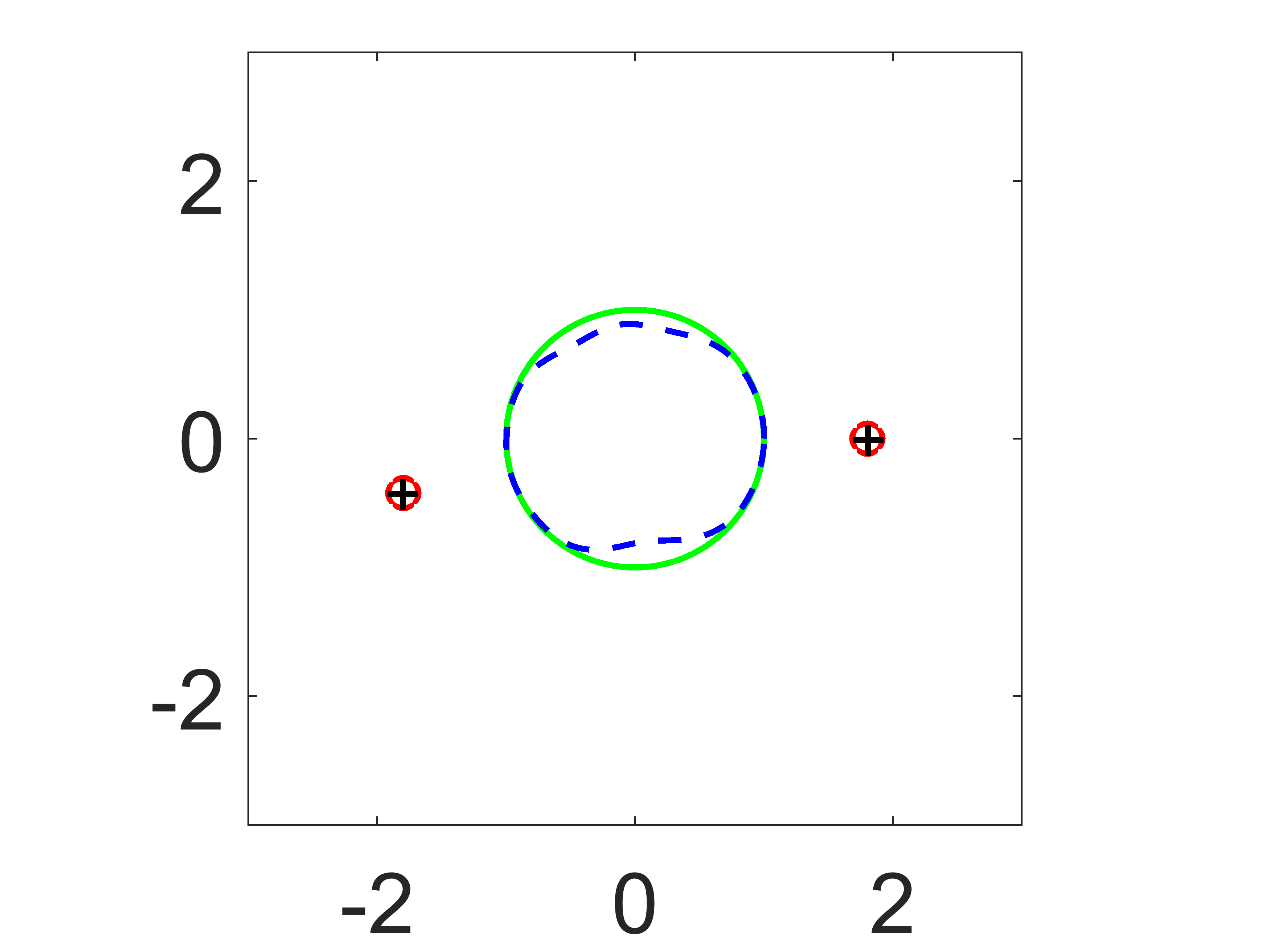}}
		\subfigure[]{\includegraphics[width=0.24\linewidth]{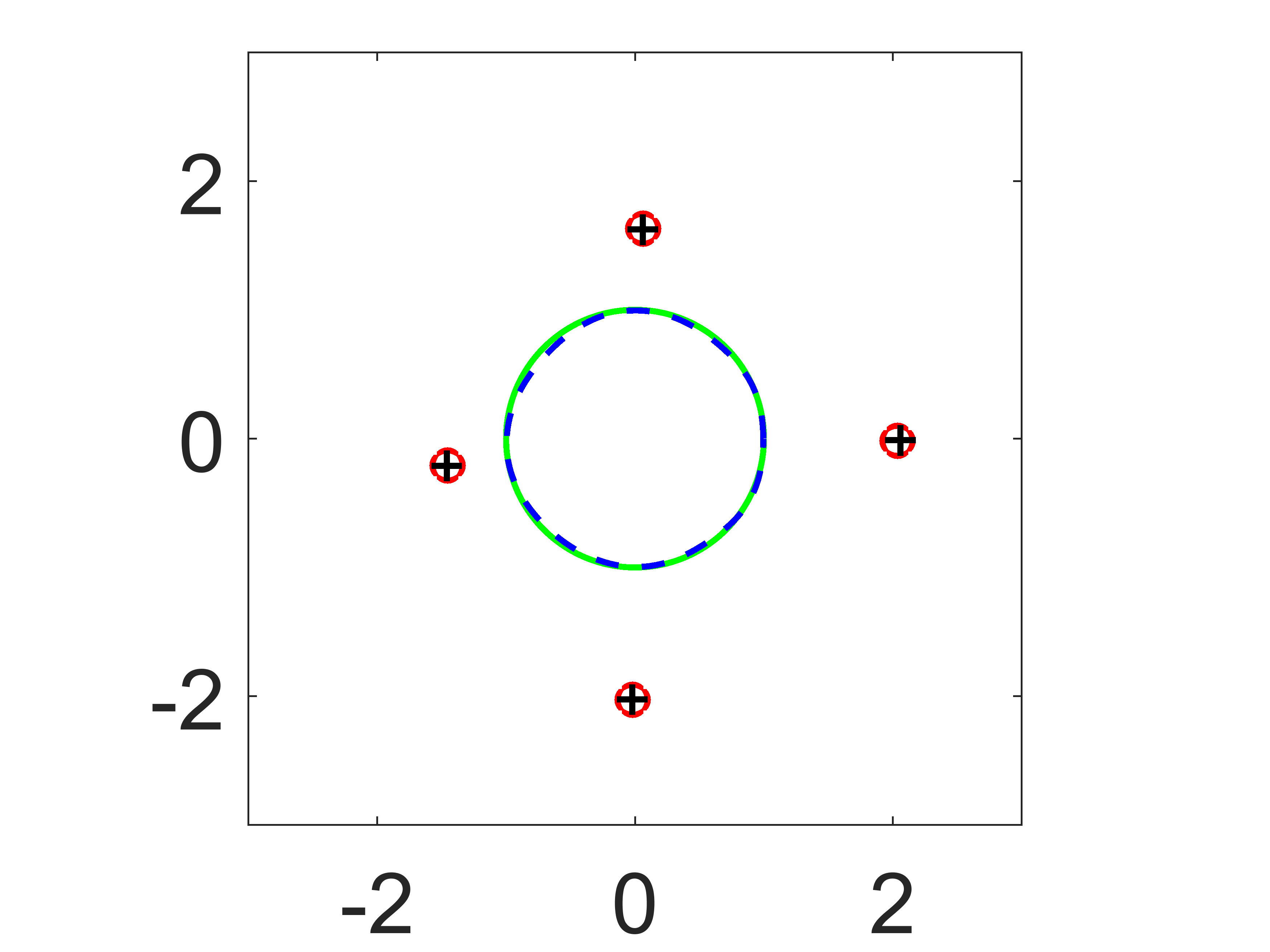}}
		\subfigure[]{\includegraphics[width=0.24\linewidth]{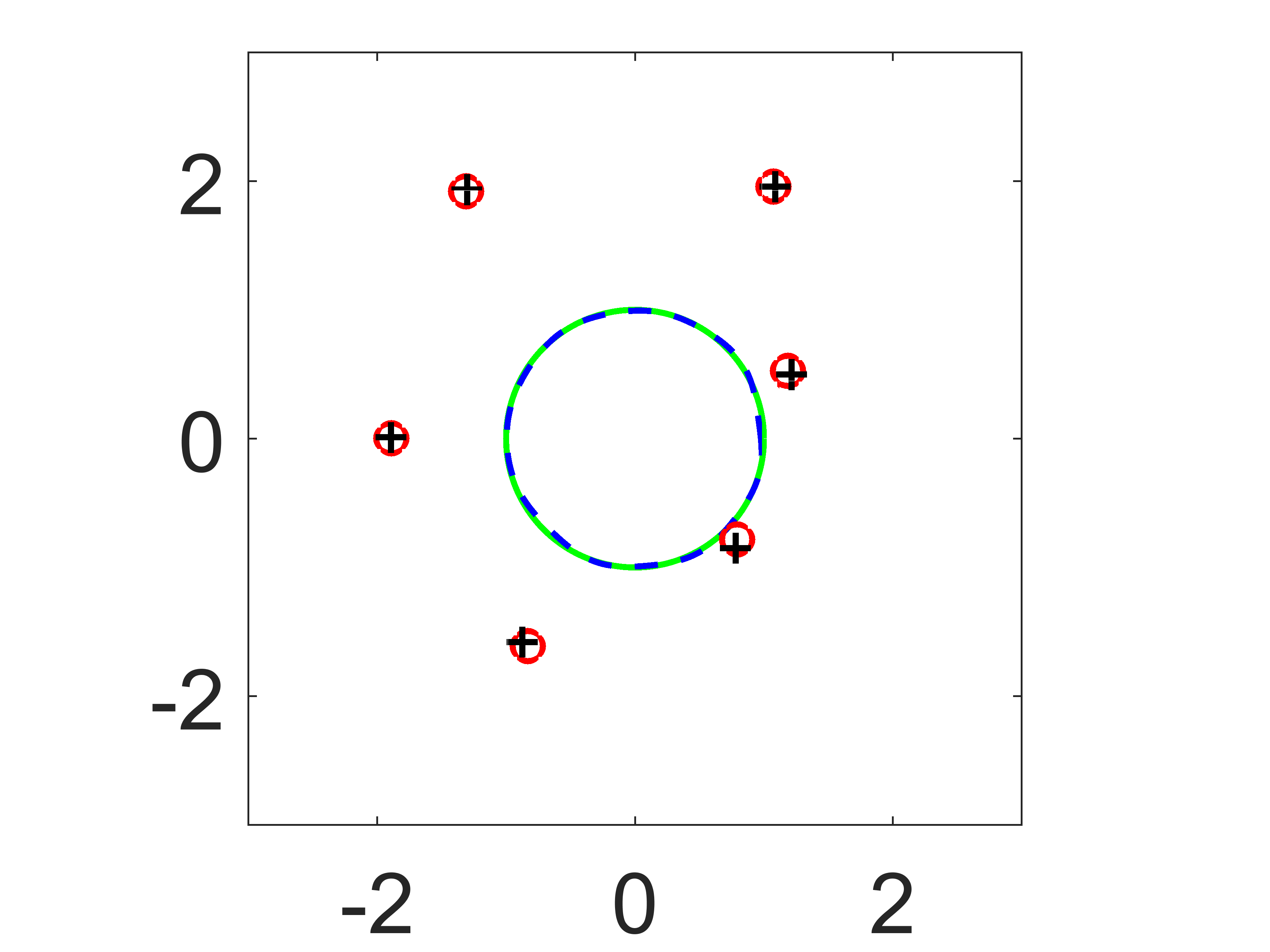}}
		\subfigure[]{\includegraphics[width=0.24\linewidth]{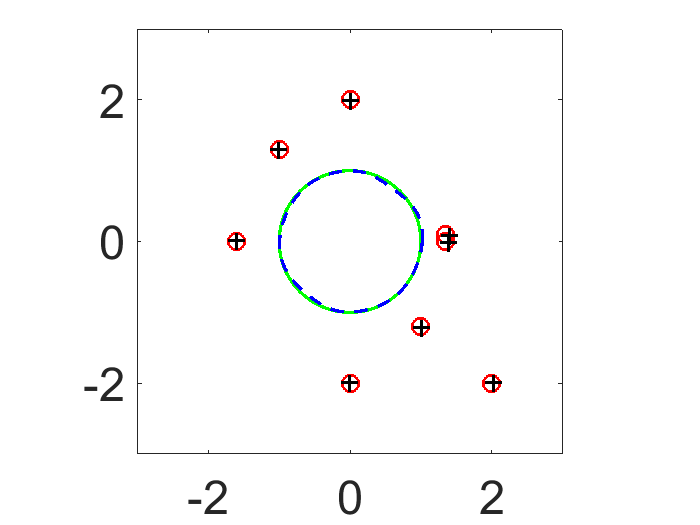}}
		\caption{Reconstructions of the circle and $N$ source points. (a) $N=2$; (b) $N=4$; (c) $N=6$; (d) $N=8$.}\label{fig:circle}
	\end{figure}
		
	Further, we test the effectiveness of the DSM proposed in Section \ref{sec:DSM} for choosing the initial guess. The parameters are chosen to be $\alpha=10^{-9}, k=8, N=8$. The exact source points are given by
	\[
		S_3=\{(2,1),(1.8,1.4),(0,1.6),(-1.6,1.8),(-2,0),(-2,-2),(0,-2.1),(1.8,-1.9)\}.
	\]
	For comparison, we consider the following two sets of artificially chosen initial guess
	\begin{align*}
	S_4&=\{(2.1,0.9),(1.85,1.3),(-0.1,1.75),(-1.7,1.7),\\
	&\quad\ (-1.9,0.05),(-1.94,-1.9),(0,-2.13),(1.8,-1.8)\},
	\end{align*}
	and 
	\begin{align*}
	S_5&=\{(2.3,1.3),(2.1,1.7),(0.3,1.3),(-1.3,1.5),(-2.3,-0.3),\\
	 &\quad\ (-2.3,-2.3),(-0.3,-1.8),(1.5,-1.6)\},
	\end{align*}
	respectively. Taking $S_4$ and $S_5$ as the initial guesses for the source points respectively, we then compare the numerical results with the case where the initial guess is determined by the DSM. The relative $L^2$ errors for $k=5$ are listed in Table \ref{tab:circle_guess}. Furthermore, we present the reconstructions of the circle as well as the source points in Figure \ref{fig:circle2}. From Table \ref{tab:circle_guess} and Figure \ref{fig:circle2}, one can observe that the DSM plays a significant role in the reconstruction by providing a good initial guess for the optimization process, which is a key step for achieving a satisfactory reconstruction. If the initial source points somewhat deviate from the true ones, then the reconstruction would drastically deteriorate, see Figure \ref{fig:circle2}(b). Therefore, results of the DSM indeed exert an immense impact on the effectiveness of the optimization. And a good initial knowledge generated by a high-resolution sampling grid $\mathcal{T}_1$ is the essential prerequisite for achieving the high-quality reconstruction.
		\begin{table}[htpb]
			\caption{Relative $L^2$ errors for reconstructions of the circle subject to different initial guesses.}
			\label{tab:circle_guess}
			\centering
		\begin{tabular}{cccc}\hline
			\toprule
			& $S_4$    &  $S_5$  &  DSM \\ \midrule
			$k=5$&$5.26\%$  &$48.87\%$ & $1.78\%$\\
			$k=8$&$3.35\%$	&$31.41\%$ & $1.22\%$
			\\\bottomrule
		\end{tabular}
		\end{table}
	
	\begin{figure}[htp]
		\centering
		\subfigure[]{\includegraphics[width=0.3\linewidth]{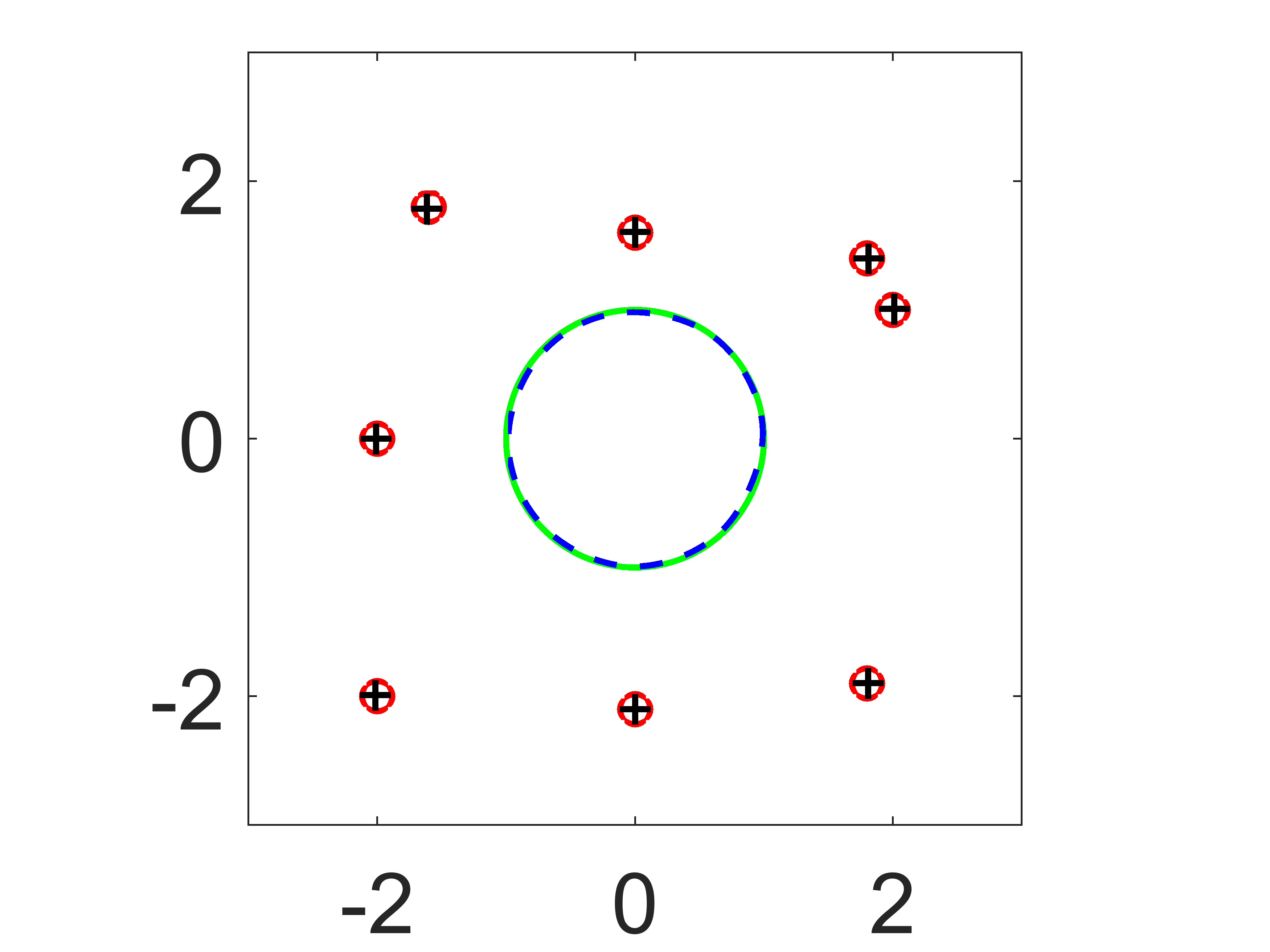}}
		\subfigure[]{\includegraphics[width=0.3\linewidth]{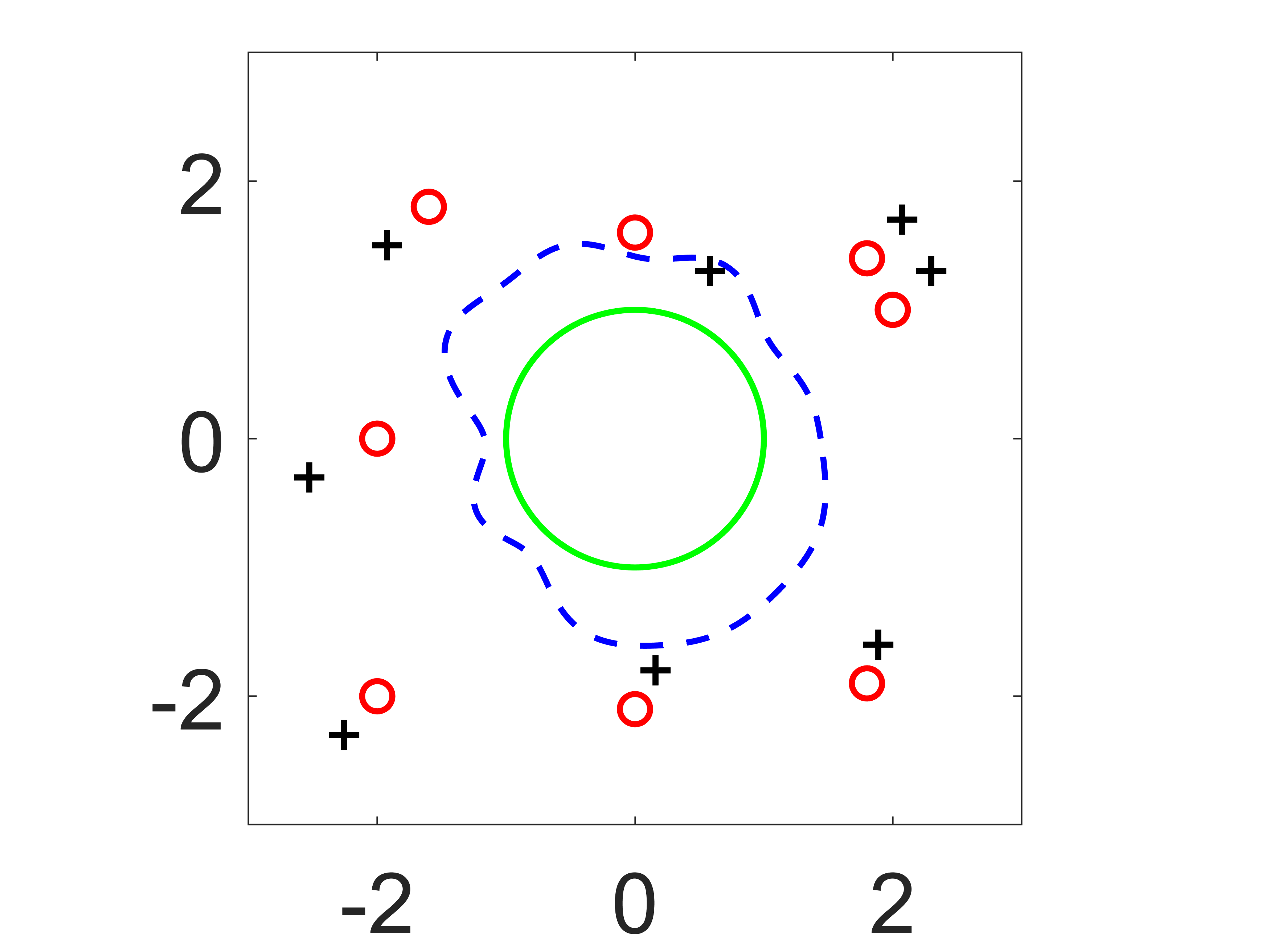}}
		\subfigure[]{\includegraphics[width=0.3\linewidth]{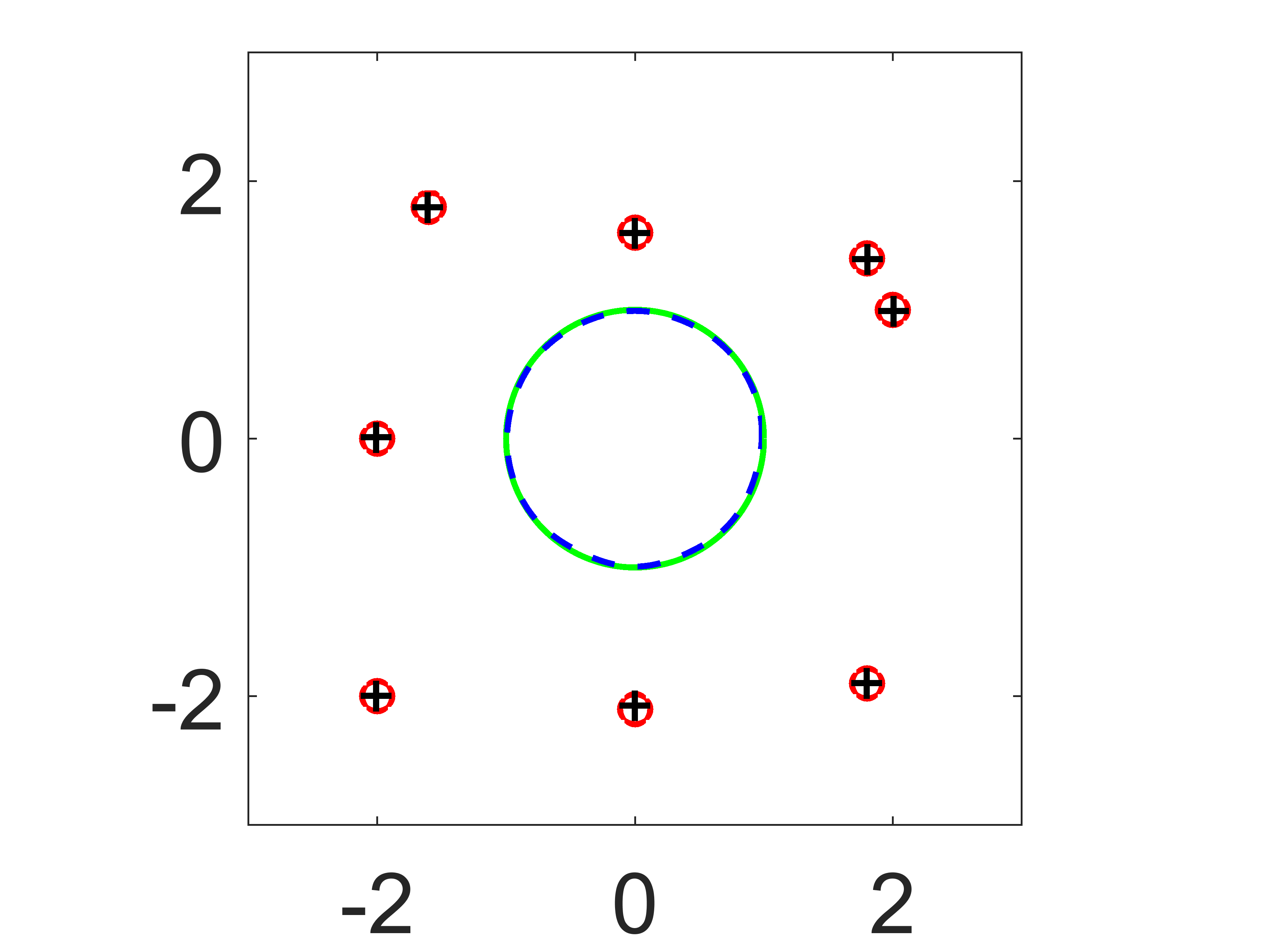}}
		\caption{Reconstructions of the circle and $8$ source points with different initial guesses. (a) $S_4;$ (b) $S_5;$ (c) DSM.}\label{fig:circle2}
	\end{figure}
		\end{example}
		
	\begin{example}\label{example3}
	This example is designed to check the validity of our method when the boundary of obstacle is not star-like. Now we consider the reconstruction of two kite-shaped obstacles with boundaries $\partial D$ described by the parametric representations
	\begin{align*}
		 & \text{kite-I:}\ x_K^{(1)}:=(x_1^{(1)},x_2^{(1)})=(\cos t+0.15\sin t+0.35\cos 2t-0.35, 1.2\sin t), \\
		 & \text{kite-II:}\ x_K^{(2)}:=(x_1^{(2)},x_2^{(2)})=(\cos t+0.65\cos 2t-0.65, 1.5\sin t),
	\end{align*}
	respectively. In this example, the sampling domains are taken to be $\Omega_1=[-2.5,2.5]\times[-2.5,2.5],$ $\Omega_2=[-1.5,1.5]\times[-1.5,1.5]$. To compute the accuracy of reconstruction, the relative errors of the boundary curves are computed by \eqref{eq:error1} with
	\[
	\tau_i^{(\ell)}=\text{arc}\tan\frac{x_2^{(\ell)}(t_i)}{x_1^{(\ell)}(t_i)},\quad \ell=1,2.
	\]
	
	In Figure \ref{fig:kite1_NF}, we take the ansatz function space of different dimensions and present the reconstruction of the kite-I with $k=5$ and  $\alpha=10^{-6}.$ By taking different $M(=2,4,6,8,12,20)$ in \eqref{eq:rM}, we compute the reconstruction errors defined by \eqref{eq:error1}. From the errors in Table \ref{table:error_kite}, we observe that the accuracy of inversion is insensitive to moderately large $M$. Except for the cases $M=2$ and $M=4$, the reconstruction is almost negligibly influenced by the dimension of $U_M$.
			
	Next, we fix $M=8$ and reconstruct the two kites. In Table \ref{table_kite}, we provide the errors for reconstructing the kites with different number of source points. These results illustrate that the reconstructions of kite-I are superior to those of kite-II, which is probably due to the fact that the boundary fluctuation of  kite-I is relatively mild in comparison to that of kite-II.
	
	In Figure \ref{fig:kite1_different_sourcepoints},  we plot the reconstruction of kite-II with $N(N=2,6,8,10)$ source points. It can be found that both the source points and the obstacle are reconstructed well. So the proposed method also works for the recovery of non-starlike scatterers.
			\begin{figure}[htpb]
				\centering
				\subfigure[]{\includegraphics[width=0.24\linewidth]{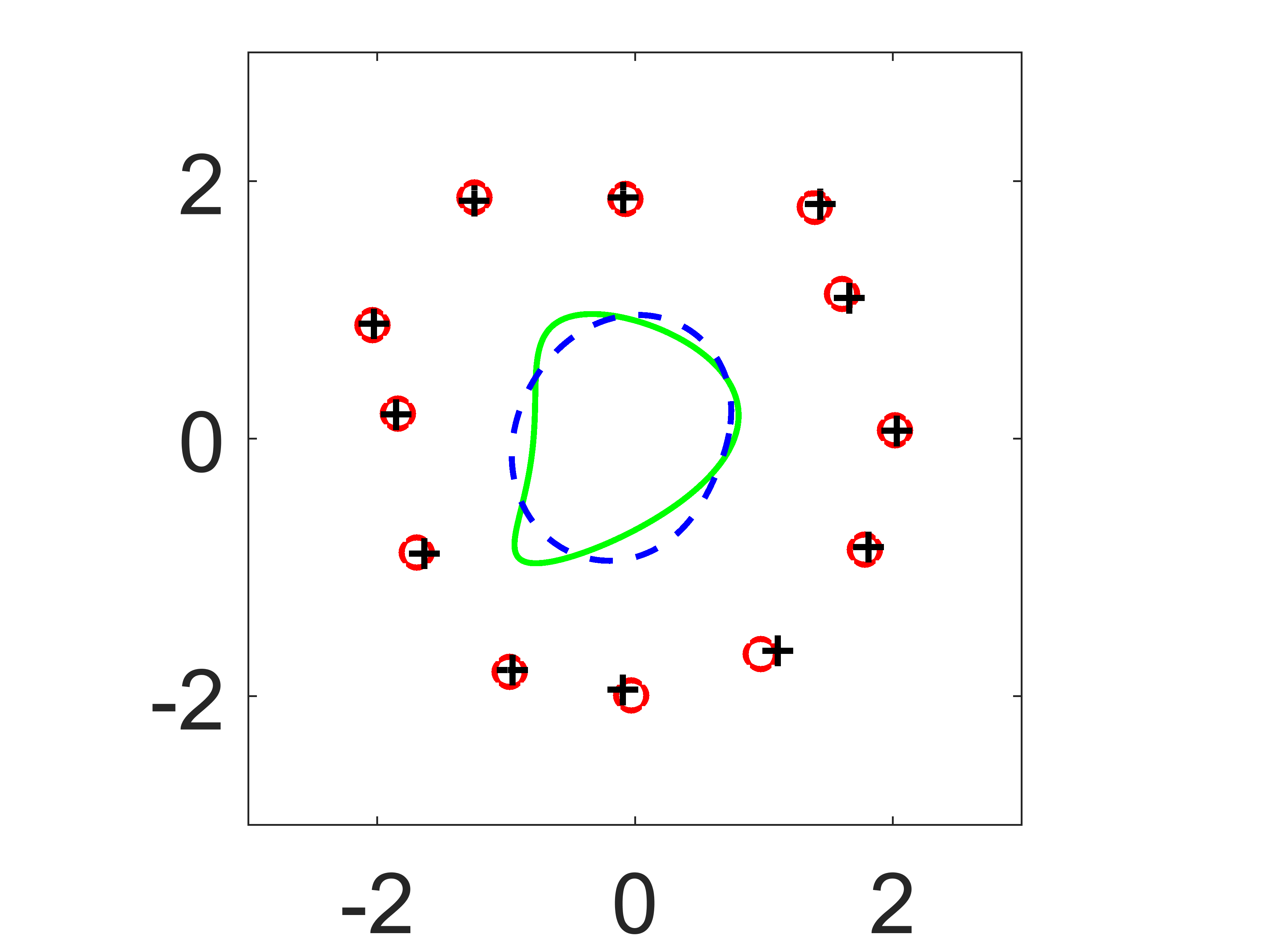}}
				\subfigure[]{\includegraphics[width=0.24\linewidth]{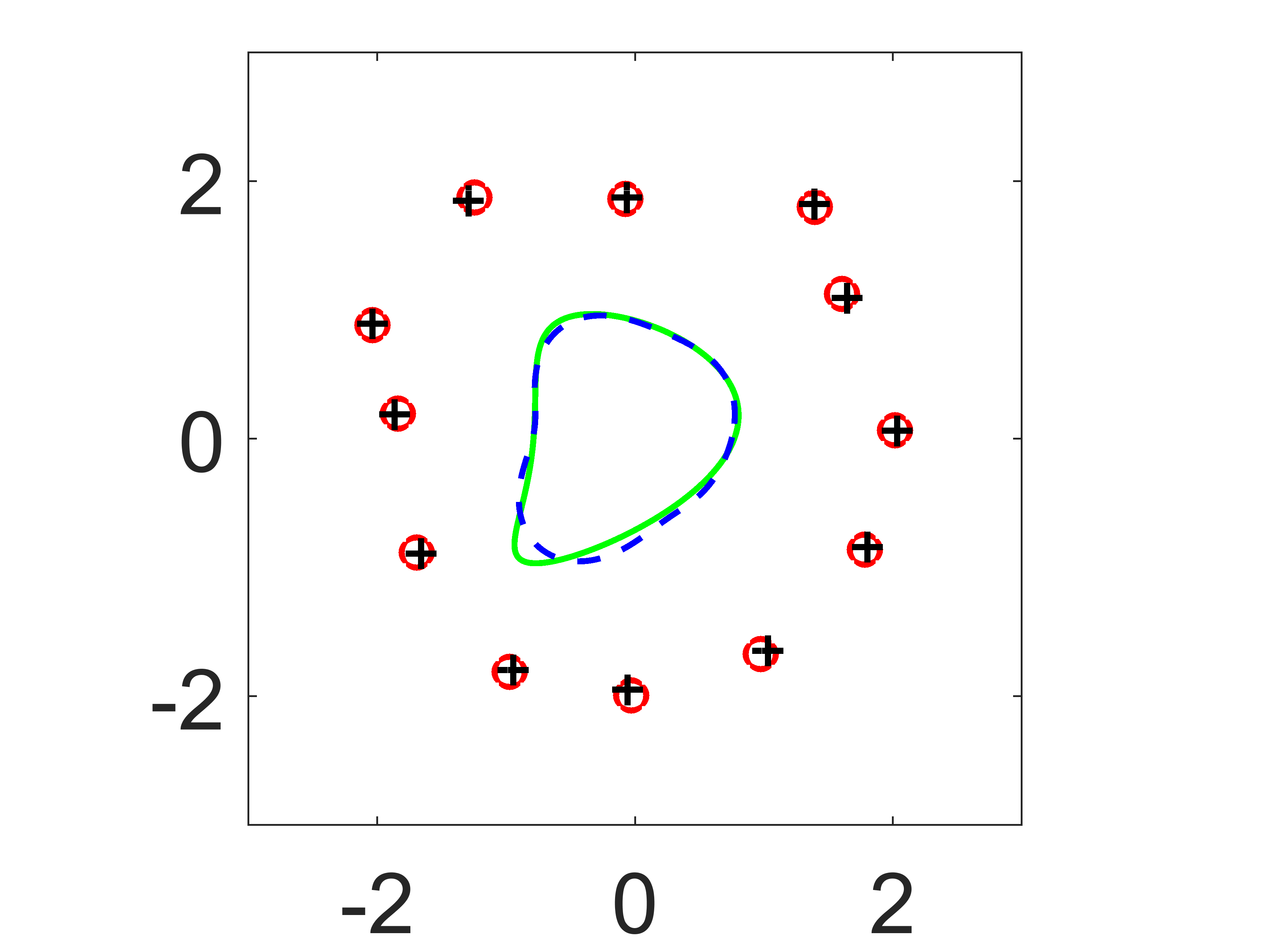}}
				\subfigure[]{\includegraphics[width=0.24\linewidth]{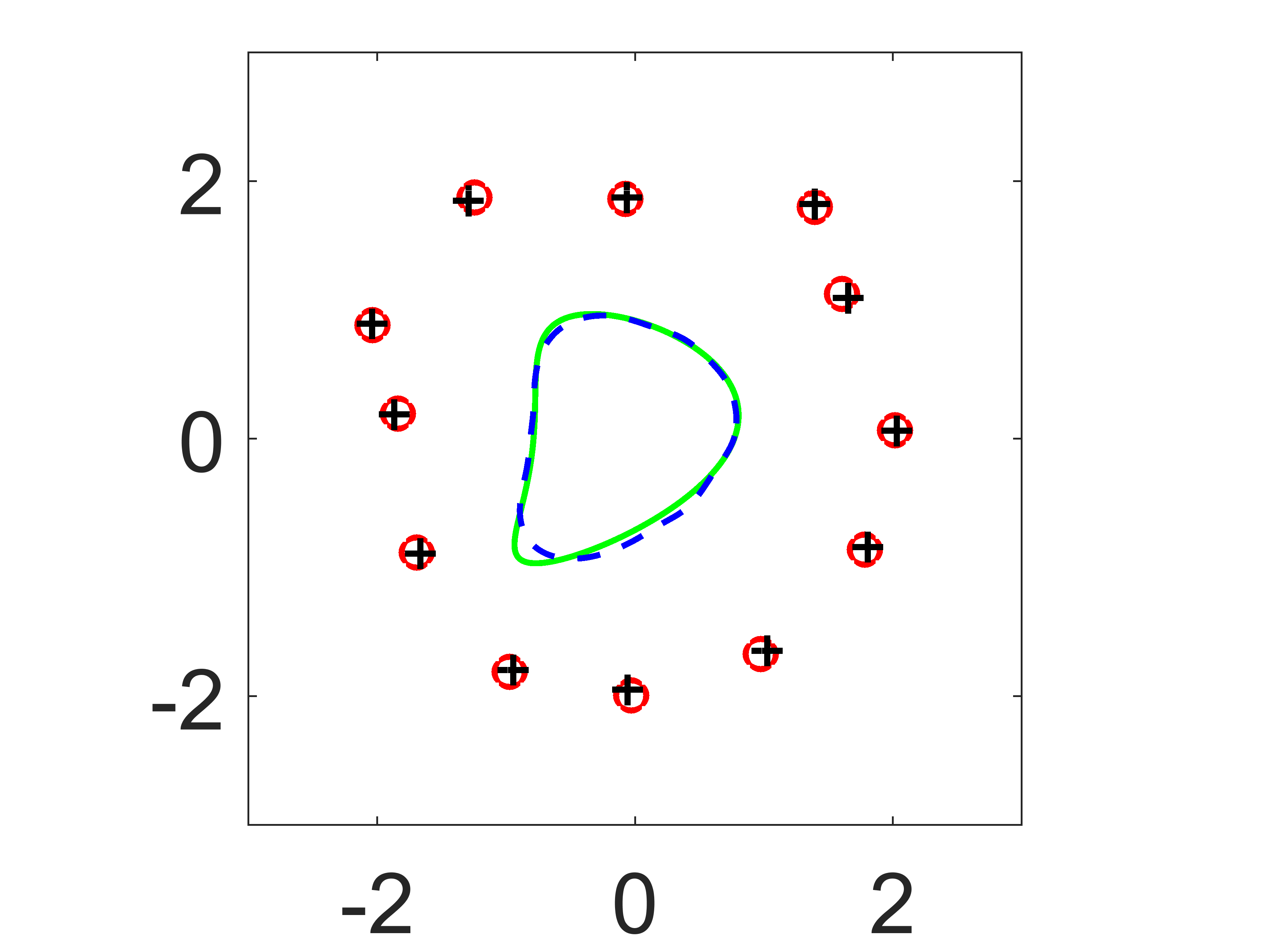}}
				\subfigure[]{\includegraphics[width=0.24\linewidth]{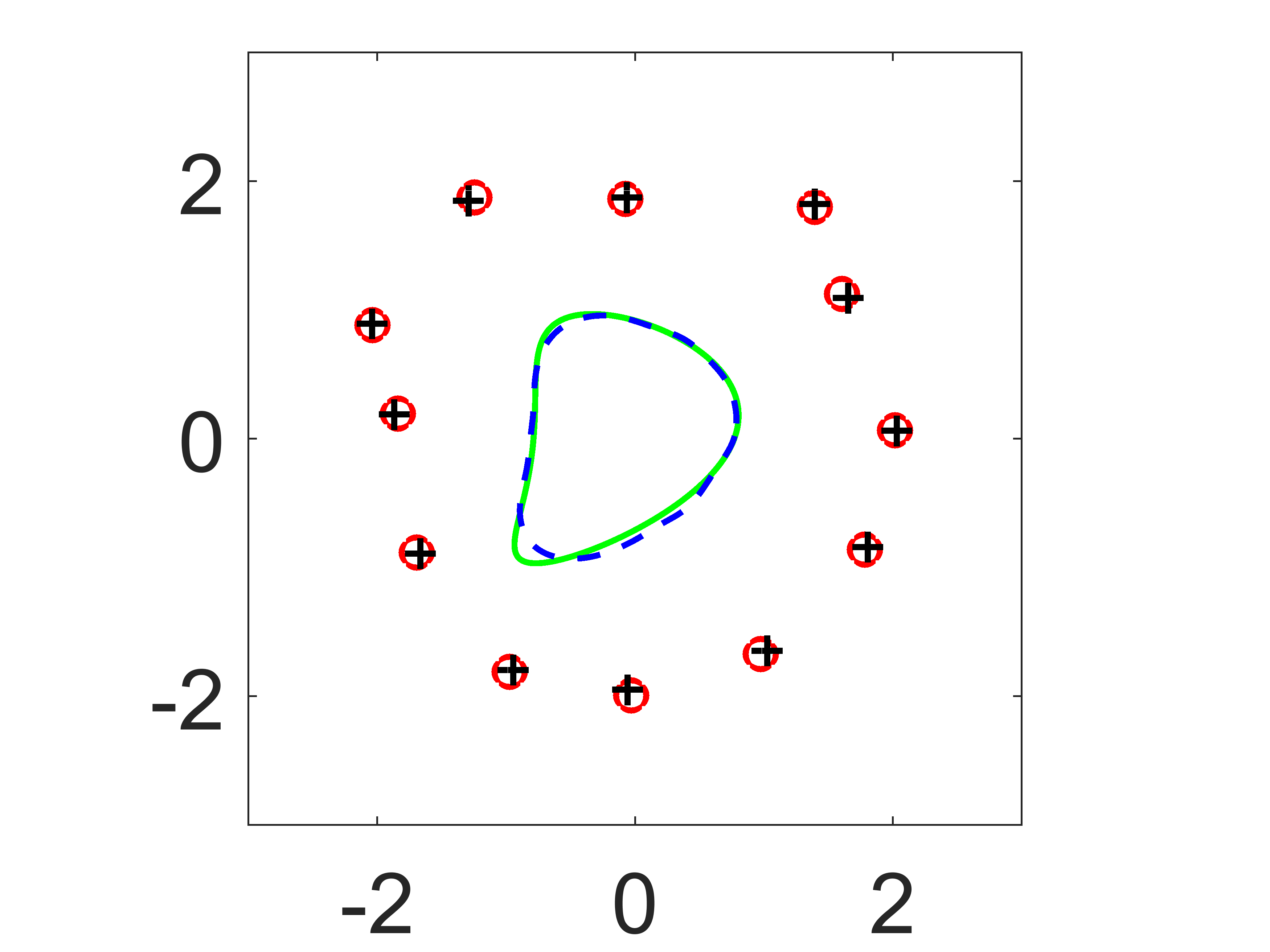}}
				\caption{Reconstructions of the kite-I and $12$ source points with subspace $U_M$ of different dimensions: (a) $M=2;$ (b) $M=4;$ (c) $M=12;$ (d) $M=20.$}\label{fig:kite1_NF}
			\end{figure}
			
			\begin{table}[htpb]
				\caption{Relative $L^2$ errors for reconstructions of the kite-I with 12 source points.}
				\label{table:error_kite}\centering
				\begin{tabular}{ccccccc}
					\toprule
					$M$  &     2     &    4     &    6     &    8     &    12    &    20    \\ \midrule
					$E_D$ & $14.39\%$ & $6.68\%$ & $5.97\%$ & $5.77\%$ & $5.75\%$ & $5.75\%$ \\ \bottomrule
				\end{tabular}
			\end{table}
			
			\begin{table}[htpb]
				\caption{Relative $L^2$ errors for reconstructions of the kite-shaped obstacles.}\label{table_kite}
				\centering
				\begin{tabular}{ccccc}
					\toprule
					&    \multicolumn{2}{c}{Error of kite-I}     &        \multicolumn{2}{c}{Error of kite-II}        \\
					\cmidrule(r){2-3} \cmidrule(r){4-5}
					$N$ & $k=3,\alpha=10^{-10}$ & $k=5,\alpha=10^{-9}$ & $k=3,\alpha=10^{-9}$ & $k=5,\alpha=1\times10^{-8}$ \\ \midrule
					$2$                     &       $10.29\%$       &       $7.00\%$       &      $10.57\%$       &           $8.27\%$           \\
					$4$                     &       $7.72\%$        &       $4.81\%$       &      $11.30\%$       &           $9.96\%$           \\
					$6$                     &       $7.33\%$        &       $5.07\%$       &      $11.46\%$       &          $12.98\%$           \\
					$8$                     &       $7.33\%$        &       $4.63\%$       &      $11.87\%$       &          $10.91\%$           \\
					$10$                    &       $6.93\%$        &       $4.84\%$       &      $12.04\%$       &          $10.56\%$           \\ \bottomrule
				\end{tabular}
			\end{table}
			
			\begin{figure}[htpb]
				\centering
				\subfigure[]{\includegraphics[width=0.24\linewidth]{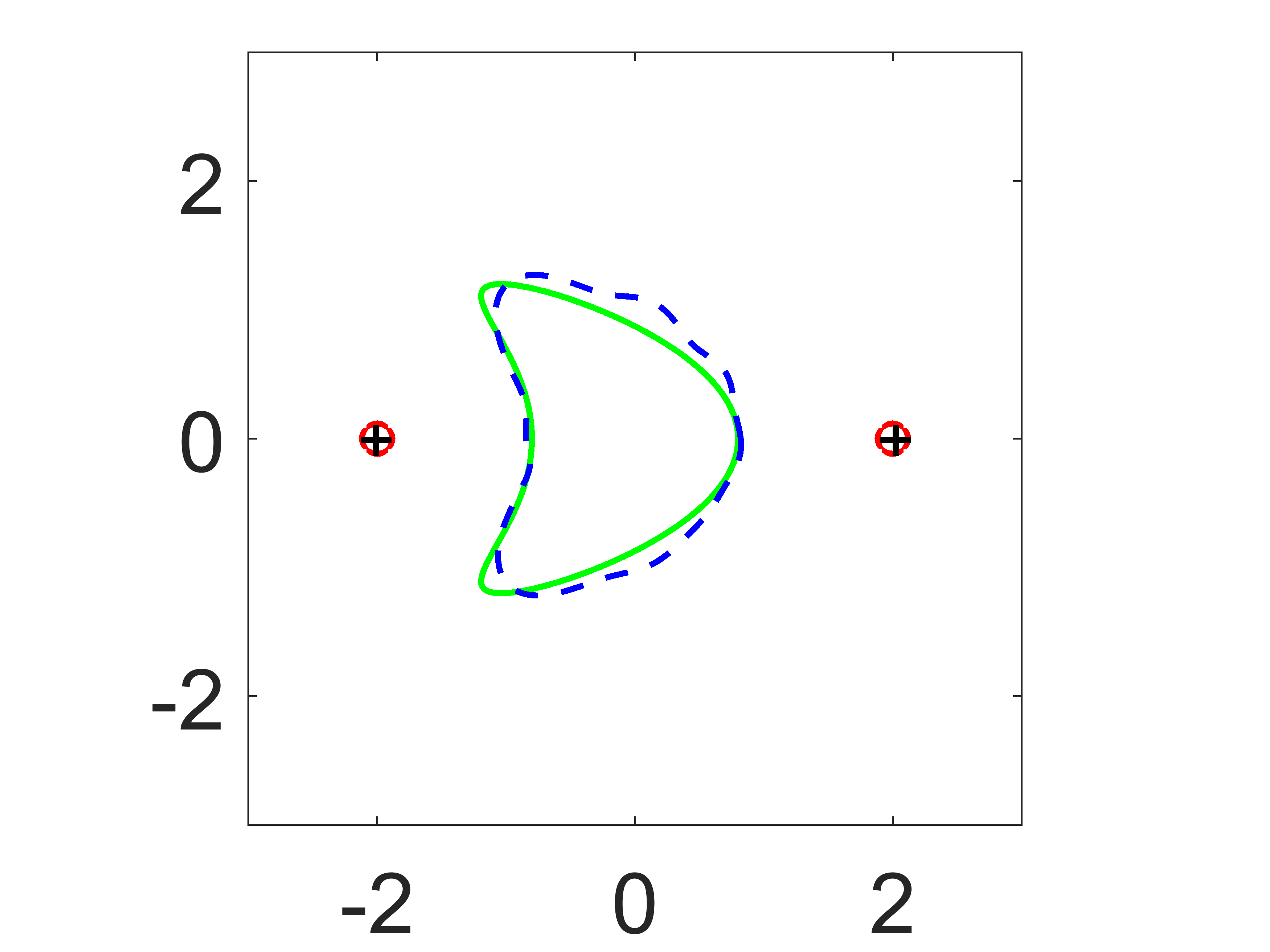}}
				\subfigure[]{\includegraphics[width=0.24\linewidth]{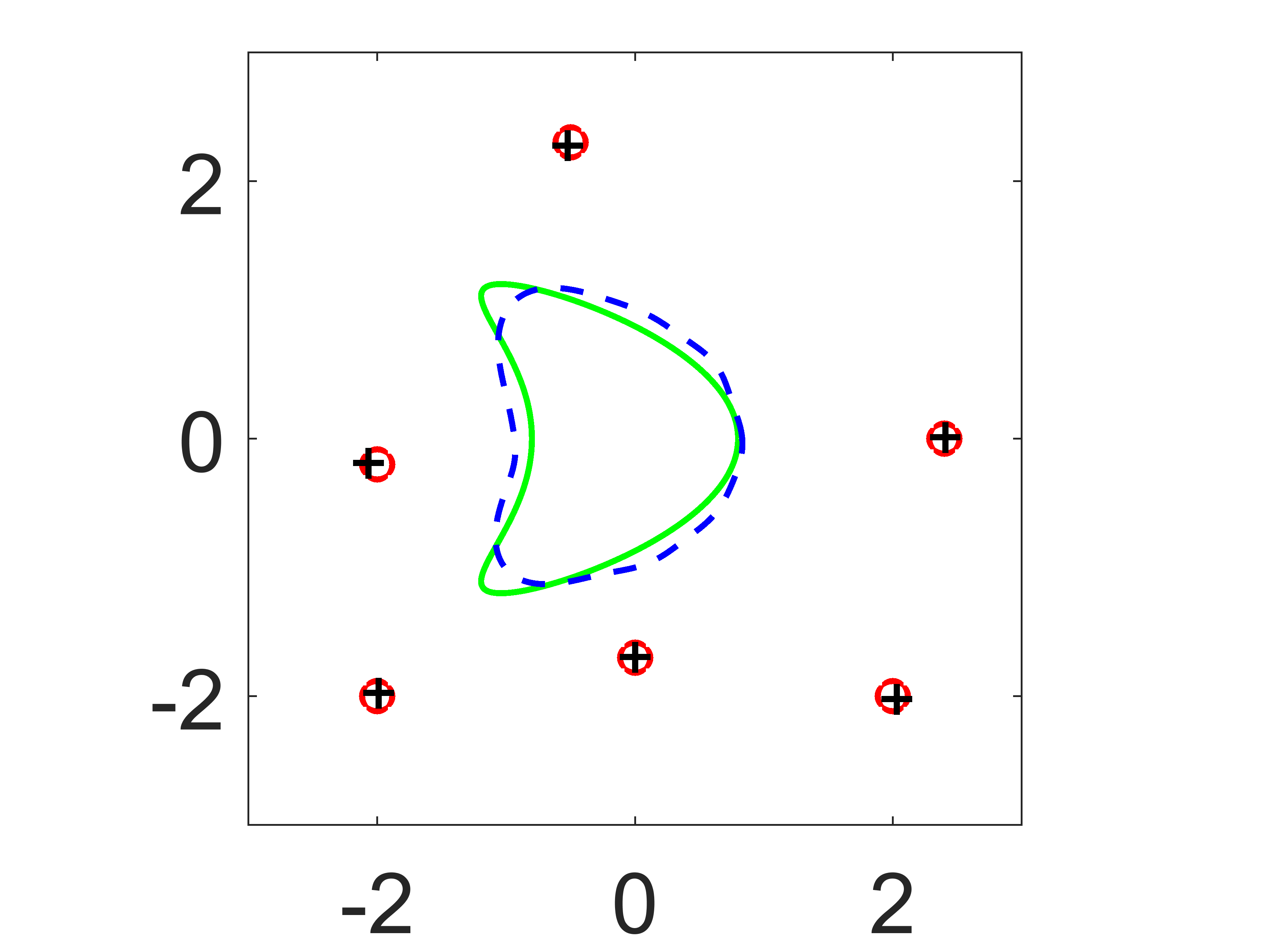}}
				\subfigure[]{\includegraphics[width=0.24\linewidth]{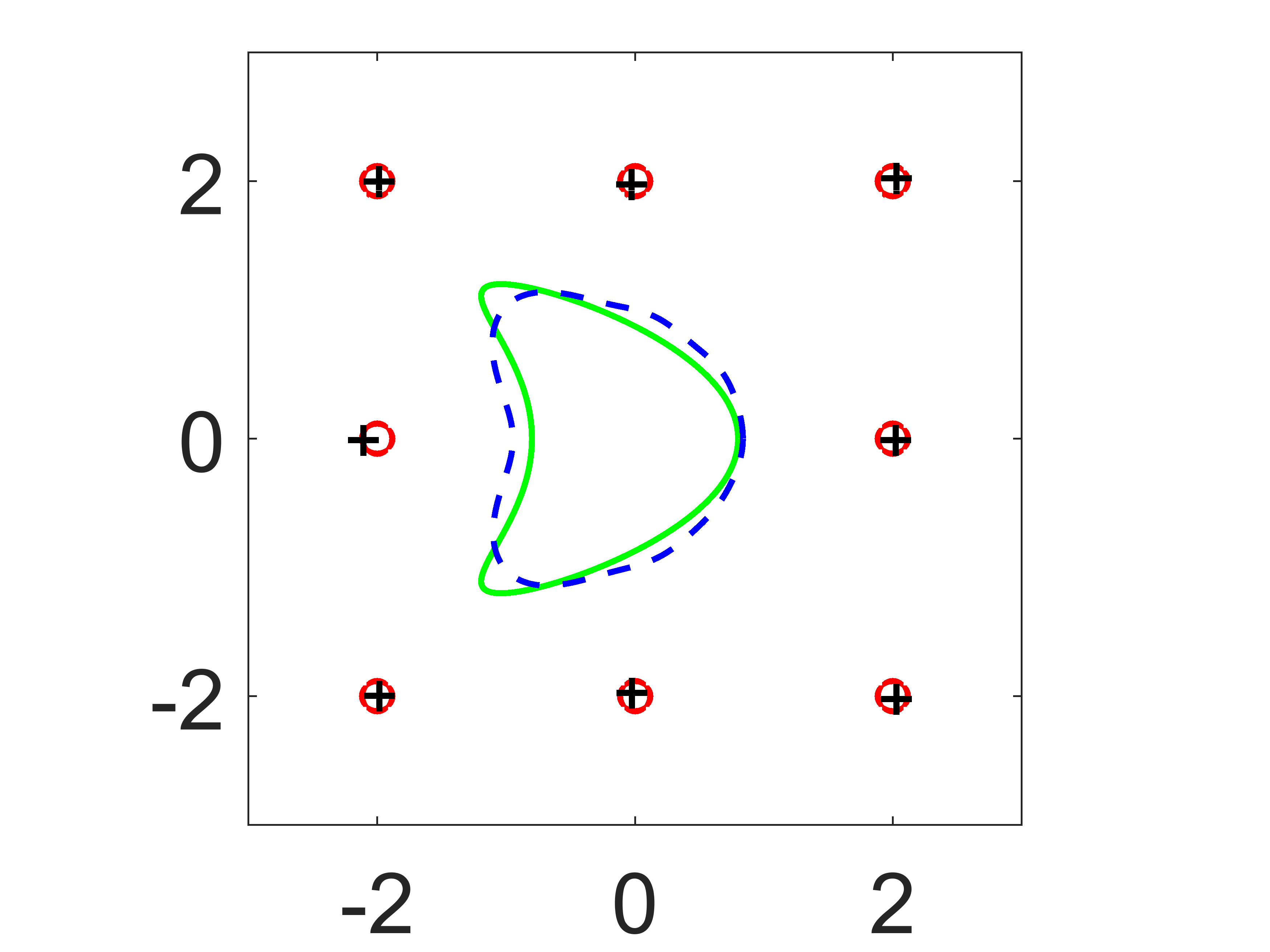}}
				\subfigure[]{\includegraphics[width=0.24\linewidth]{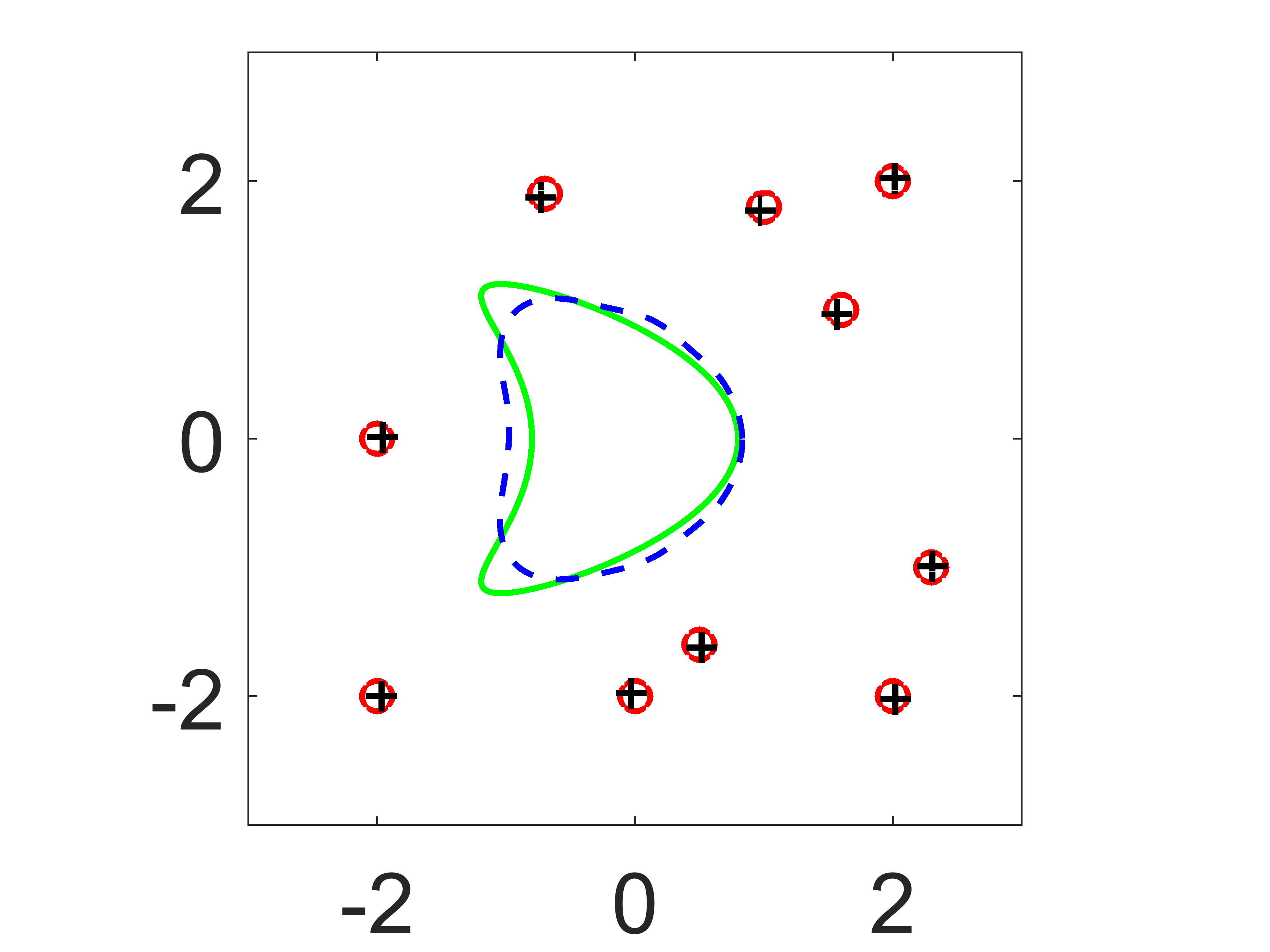}}
				\caption{Reconstructions of kite-II and source points of different number: (a) $N=2;$ (b) $N=6;$ (c) $N=8;$ (d) $N=10.$}\label{fig:kite1_different_sourcepoints}
			\end{figure}
		\end{example}
		
	\begin{example}\label{example4}
	In the last example, we plan to reconstruct the obstacle-source pair with limited-aperture measurements. Motivated by realistic applications where only partial data is available, we test the proposed method when the total field is only accessed on a portion of the measurement curve. In this example, we collect the total field recorded by the receivers located on a part of $\Gamma_R$ with the aperture chosen to be $\theta=3\pi/2$ and $\pi,$ respectively. The regularization parameter is chosen to be $\alpha=10^{-6}$. The sampling domains are $\Omega_1 = [-2.5,2.5]\times[-2.5,2.5]$ and $\Omega_2 = [-1.5,1.5]\times[-1.5,1.5]$. 
					
	In Table \ref{table_limited}, we list the relative $L^2$ errors of the boundary curves with respect to different parameters $k$, $\theta$ and $N$. Figure \ref{fig:limited} shows the reconstruction for the starfish together with $N(=4,8)$ source points in the limited-aperture case with $k=8$. Figure \ref{fig:limited} illustrate that the part of boundary curve encompassed by the receivers can always be well reconstructed. Typically, due to the lack of information, recovery of the opposite side oriented towards the open aperture is less accurate. Anyhow, all the numerical results show that, even with limited-aperture data, the reconstruction method still reasonably works.
			
			\begin{table}[htpb]
				\caption{Relative $L^2$ errors for the reconstructions of the starfish with limited angles. }\label{table_limited}
				\centering
				\begin{tabular}{ccccc}
					\toprule
					& \multicolumn{2}{c}{$k=8$} & \multicolumn{2}{c}{$k=5$} \\
					\cmidrule(r){2-3} \cmidrule(r){4-5}
					$\theta$ &   $N=8$   &     $N=4$     &  $N=8$   &     $N=4$      \\ \midrule
					$\pi$       & $14.30\%$  &   $16.17\%$   & $8.39\%$ &   $12.04\%$    \\
					$3\pi/2$ & $7.06\%$ &   $7.33\%$    & $7.99\%$ &    $7.92\%$    \\ \bottomrule
				\end{tabular}
			\end{table}
			\begin{figure}[htpb]
				\centering
				\subfigure[]{\includegraphics[width=0.24\linewidth]{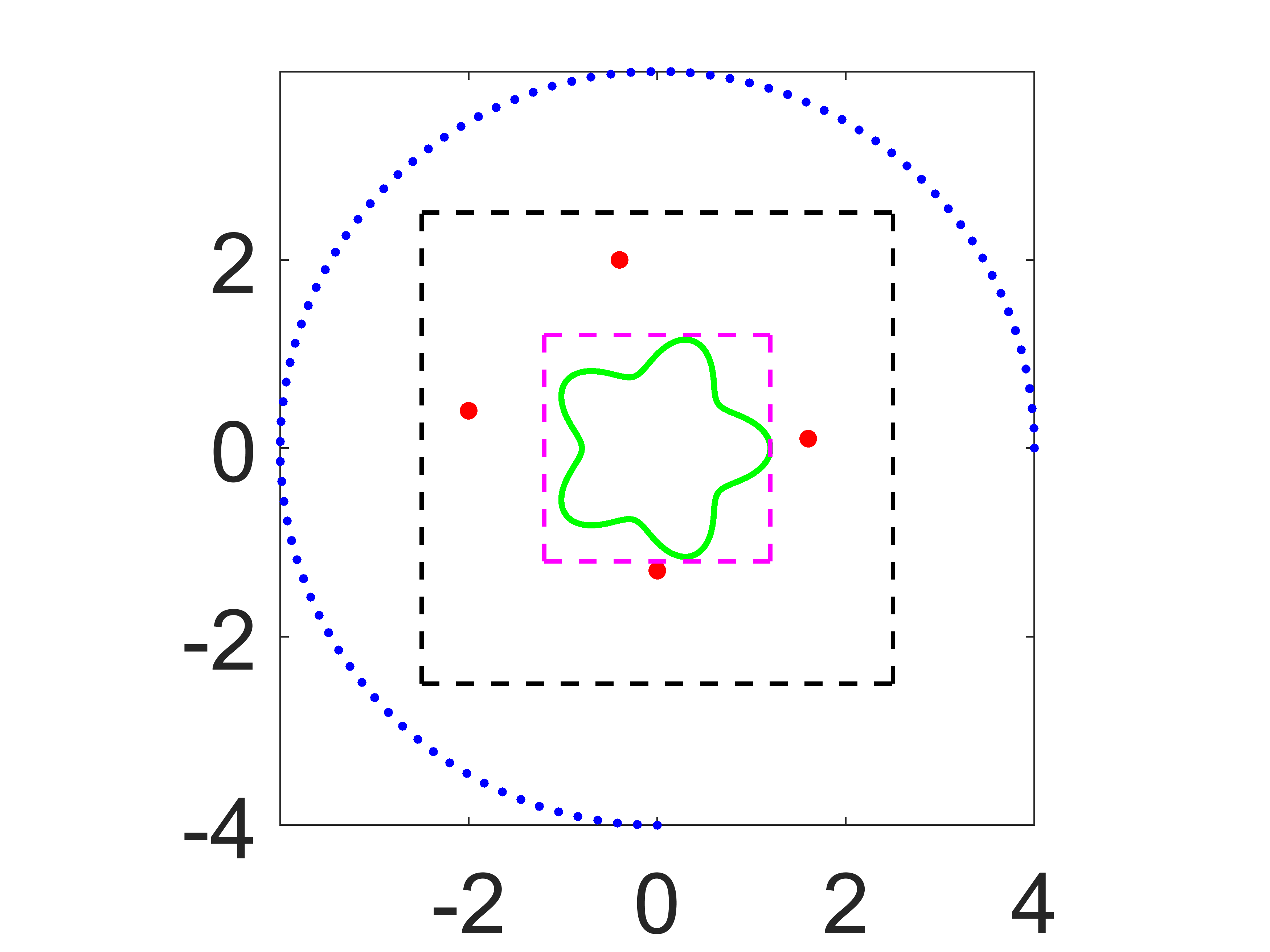}}
				\subfigure[]{\includegraphics[width=0.24\linewidth]{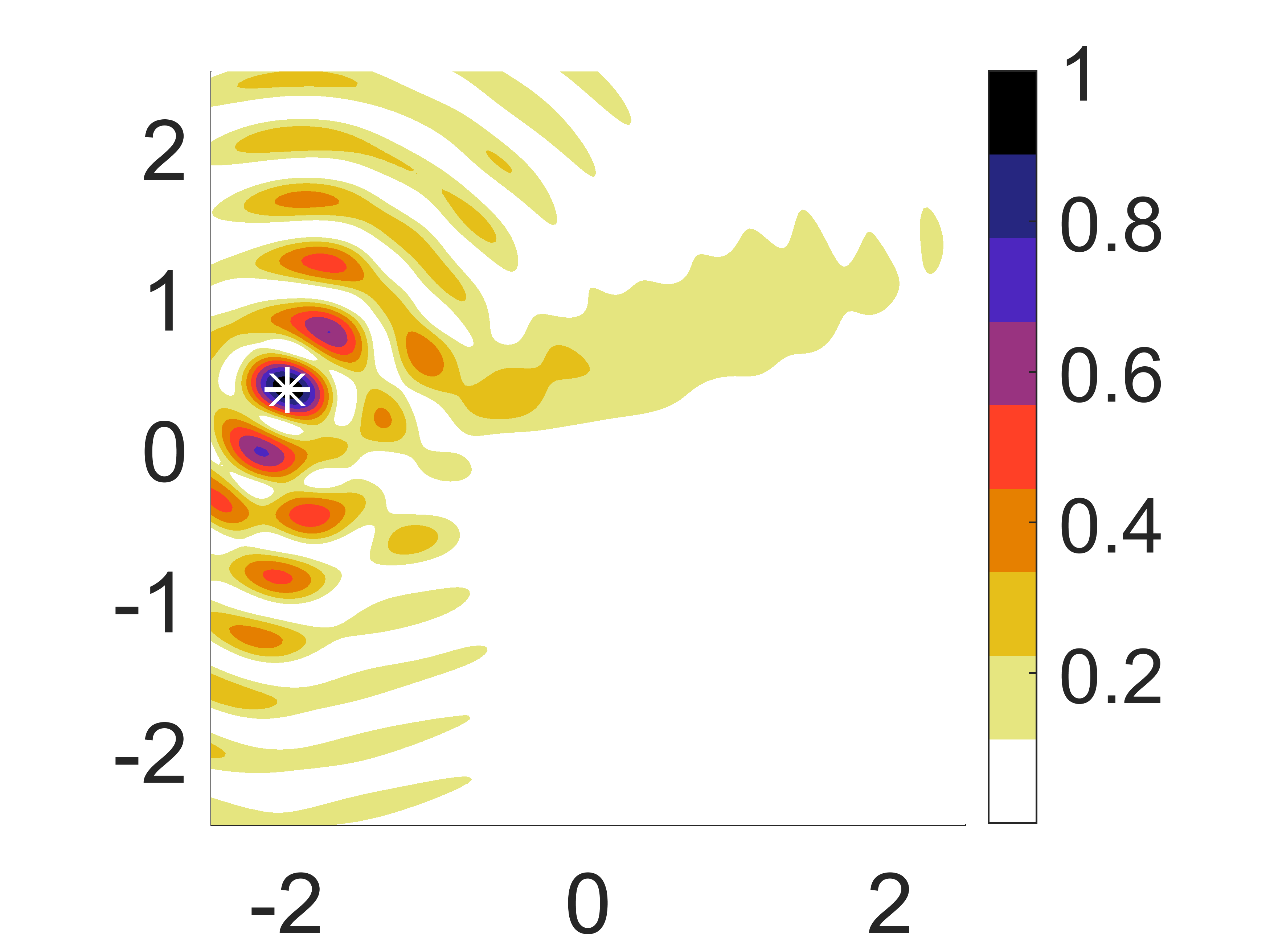}}
				\subfigure[]{\includegraphics[width=0.24\linewidth]{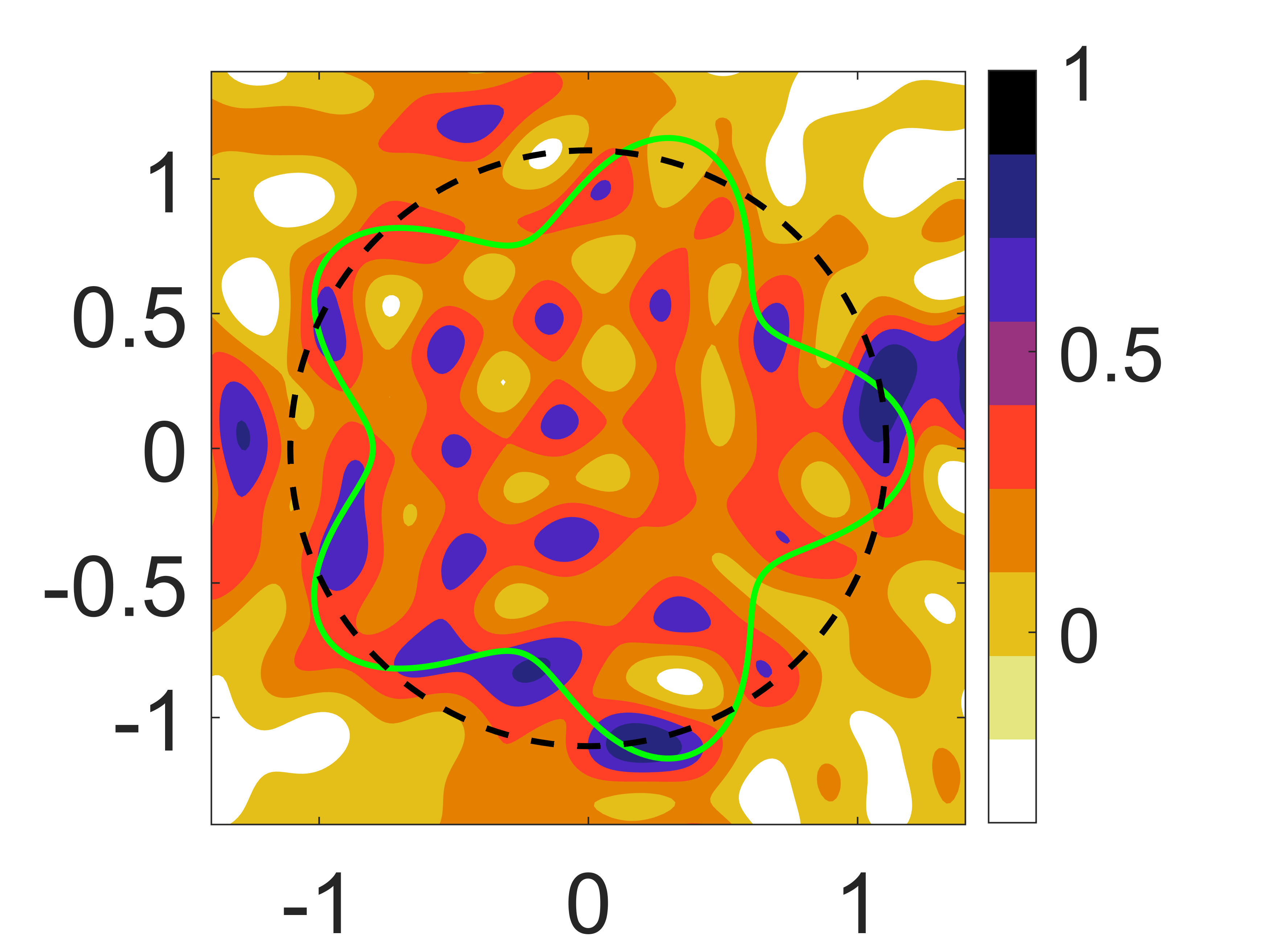}}
				\subfigure[]{\includegraphics[width=0.24\linewidth]{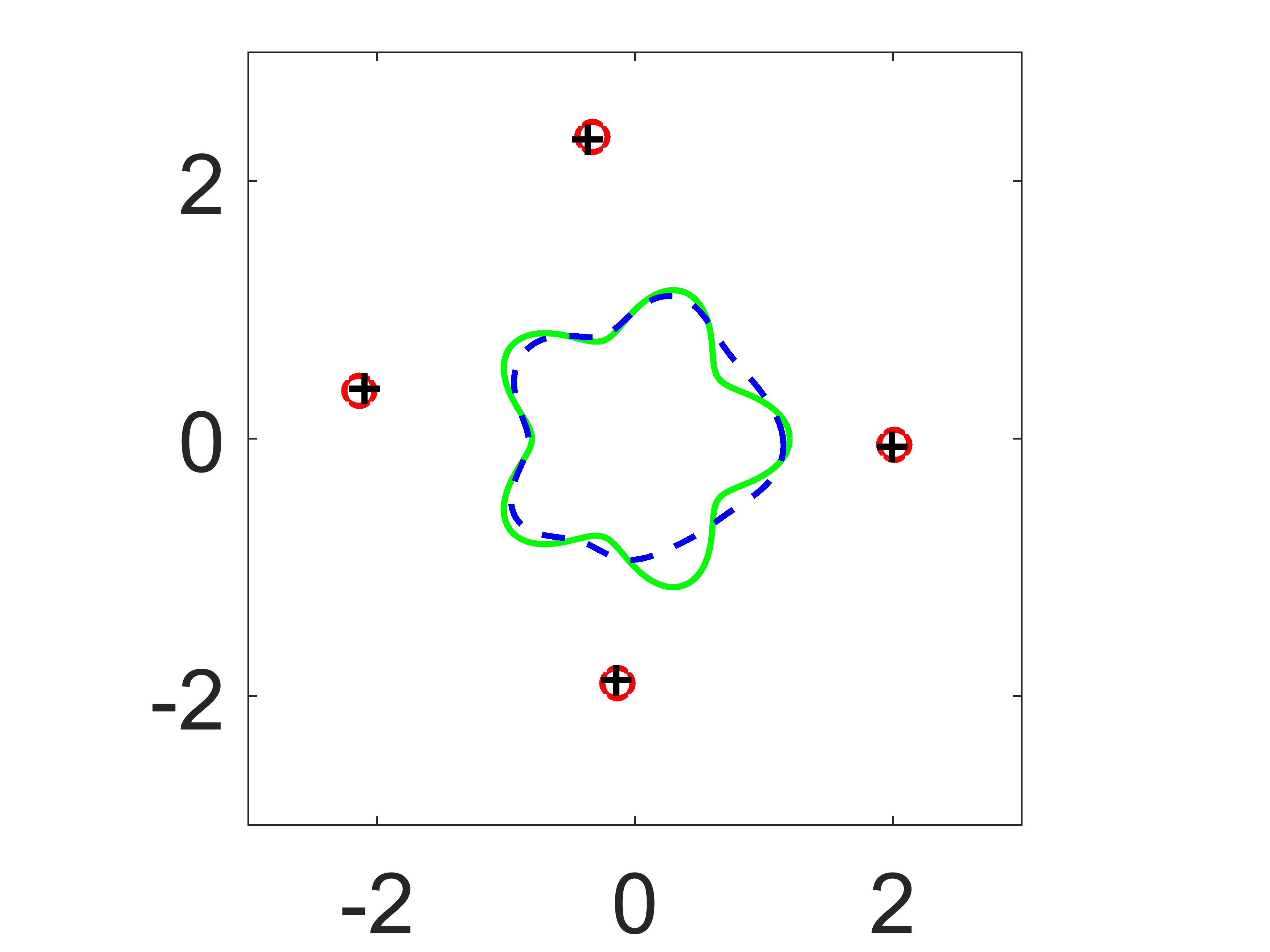}}
				
				\subfigure[]{\includegraphics[width=0.24\linewidth]{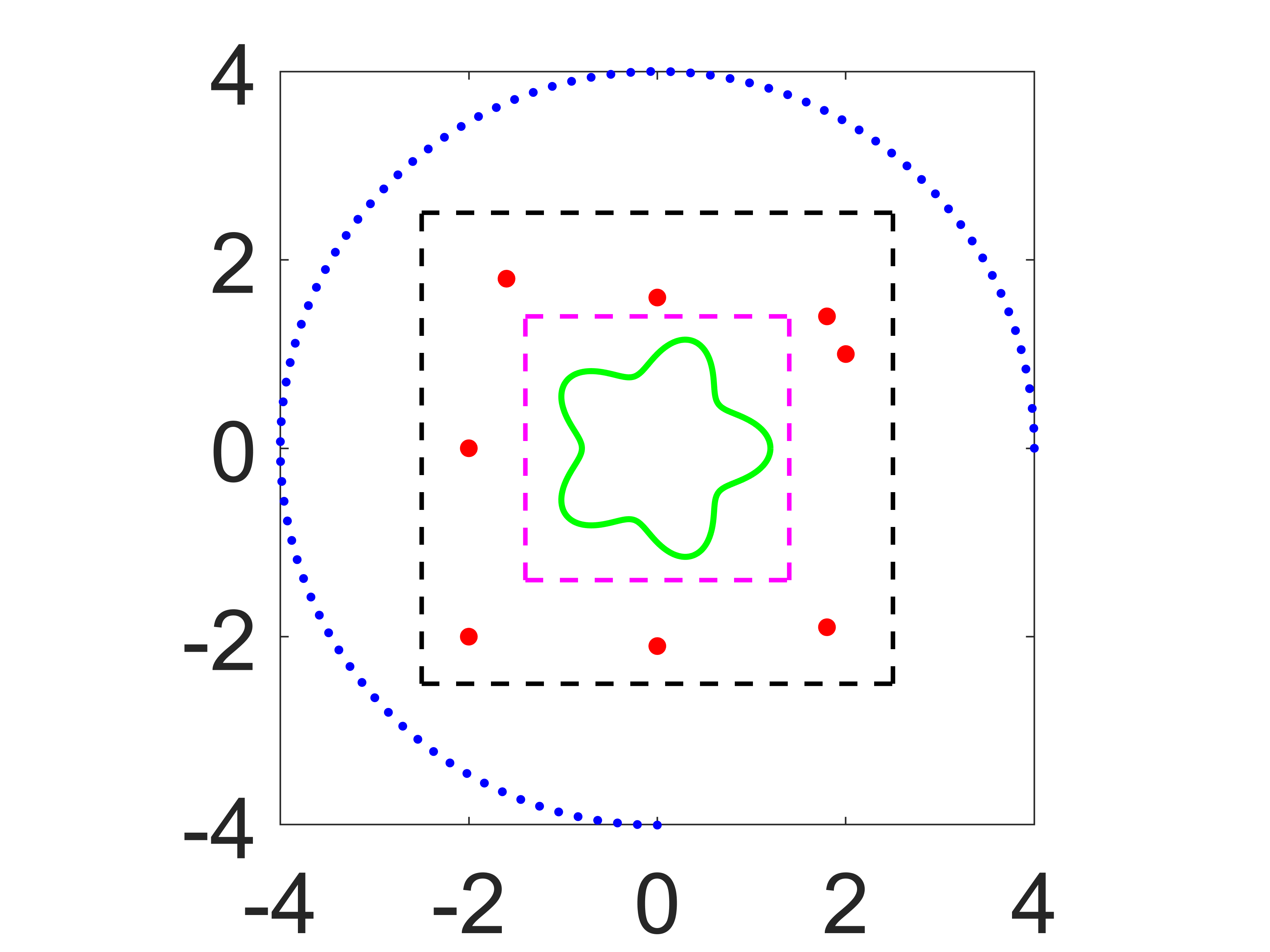}}
				\subfigure[]{\includegraphics[width=0.24\linewidth]{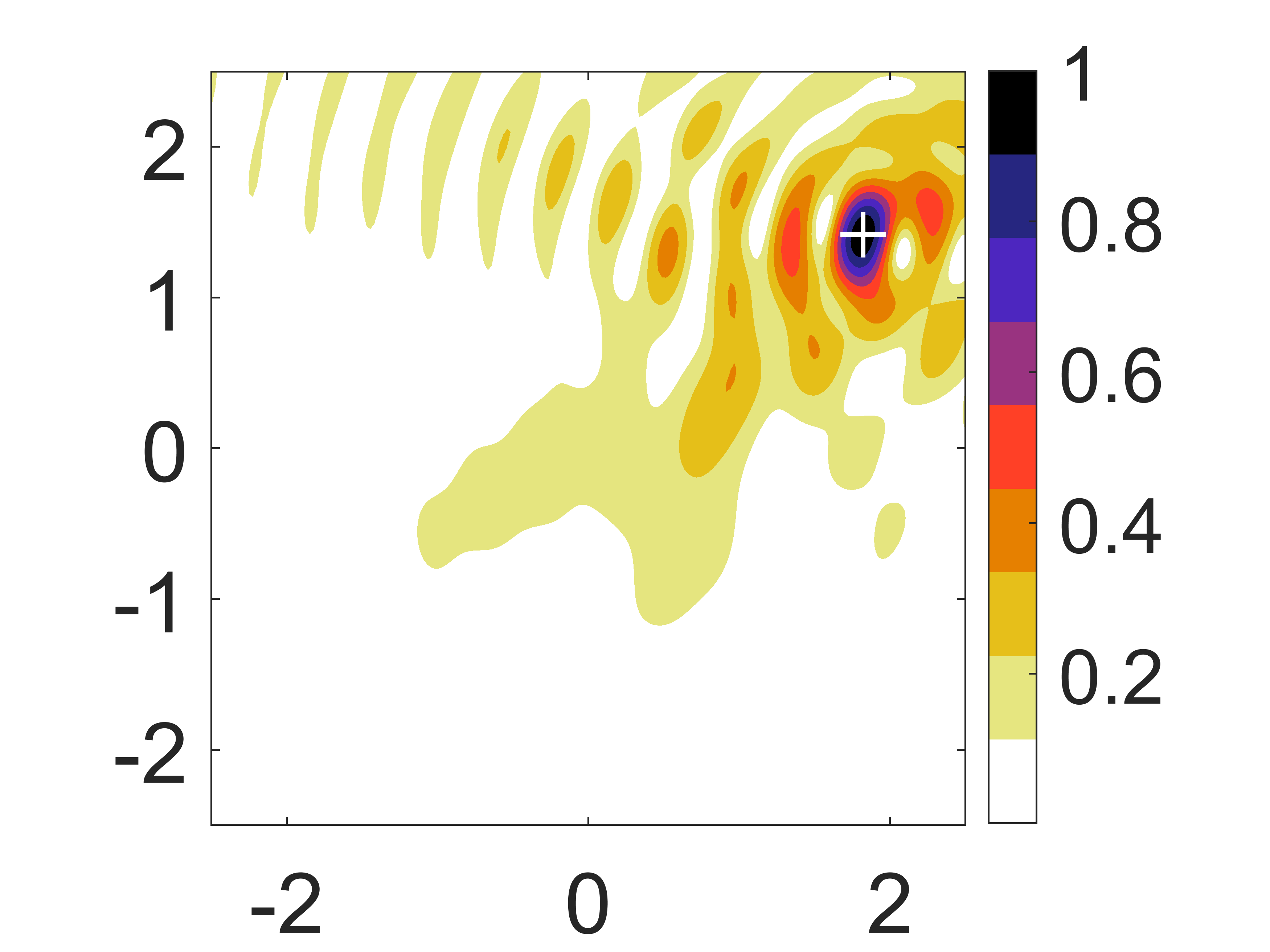}}
				\subfigure[]{\includegraphics[width=0.24\linewidth]{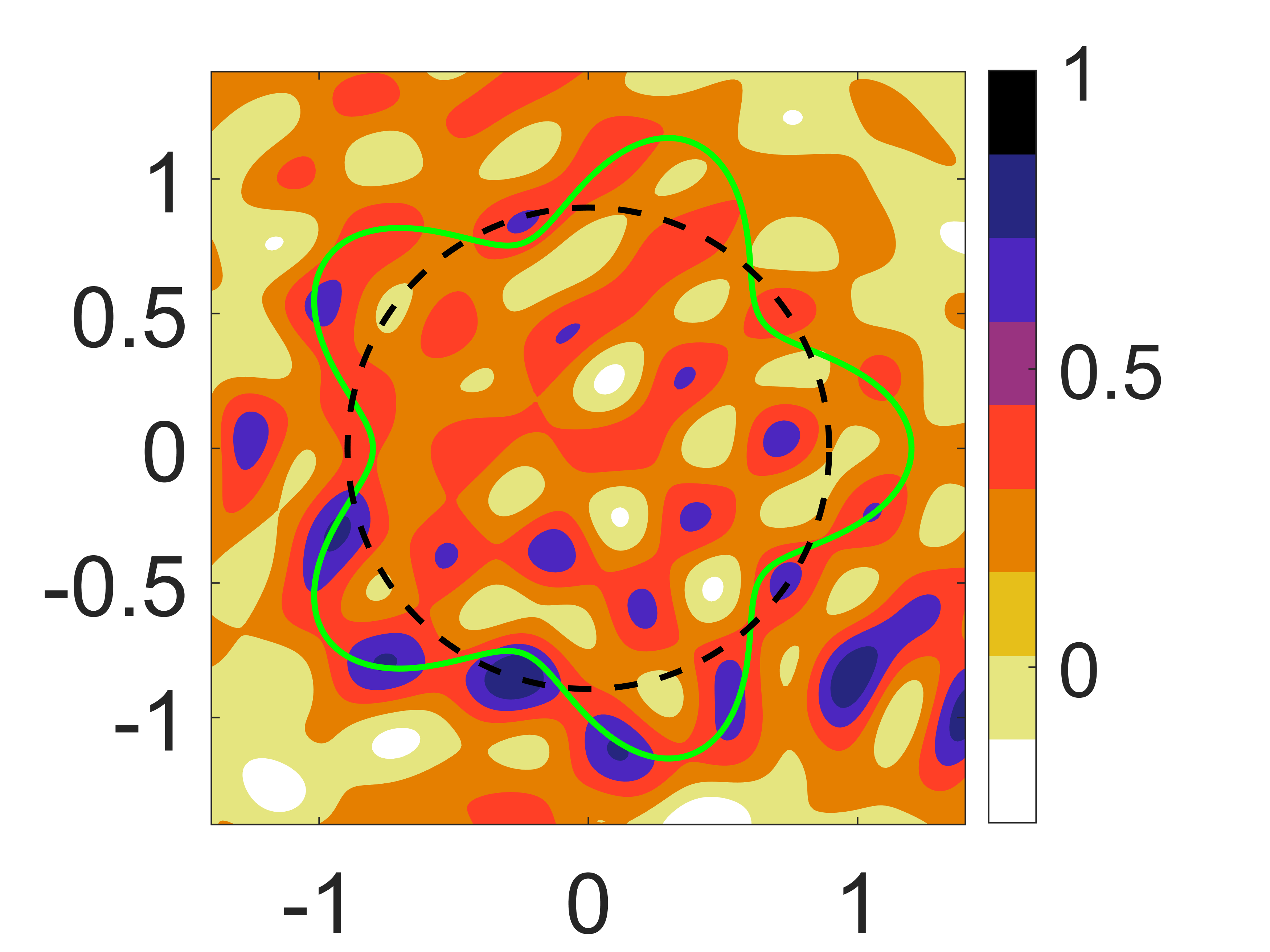}}
				\subfigure[]{\includegraphics[width=0.24\linewidth]{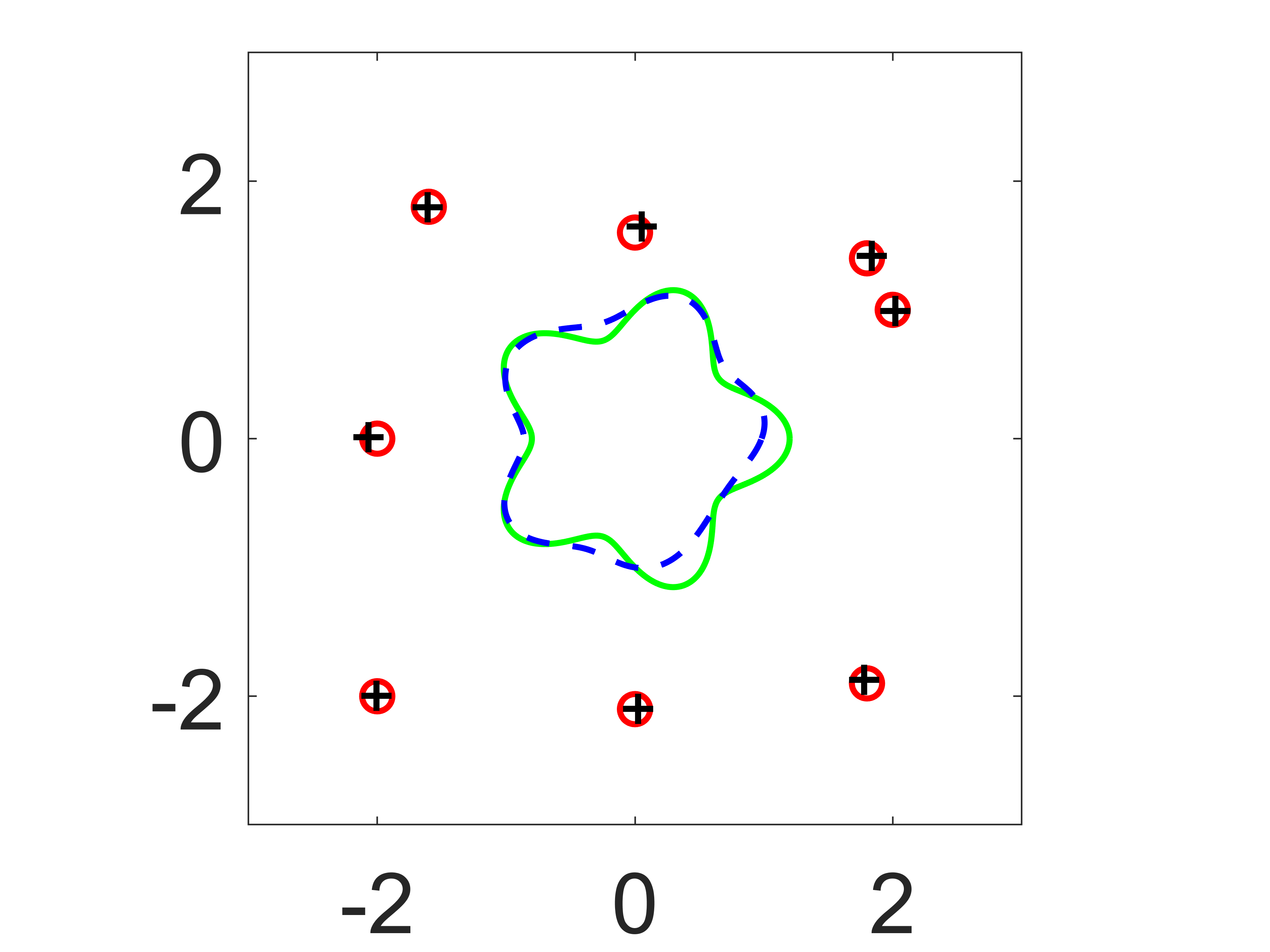}}
				
				\subfigure[]{\includegraphics[width=0.24\linewidth]{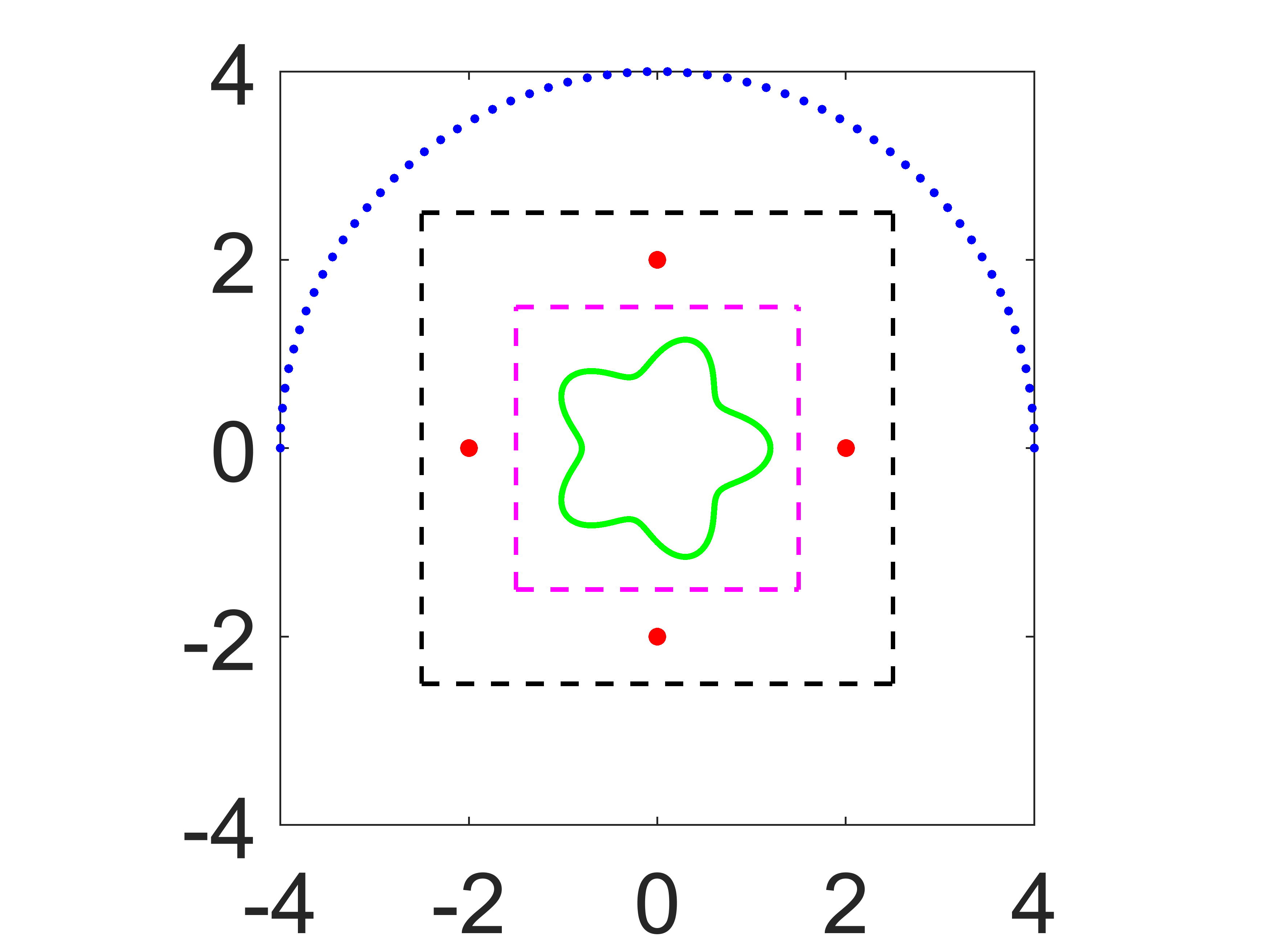}}
				\subfigure[]{\includegraphics[width=0.24\linewidth]{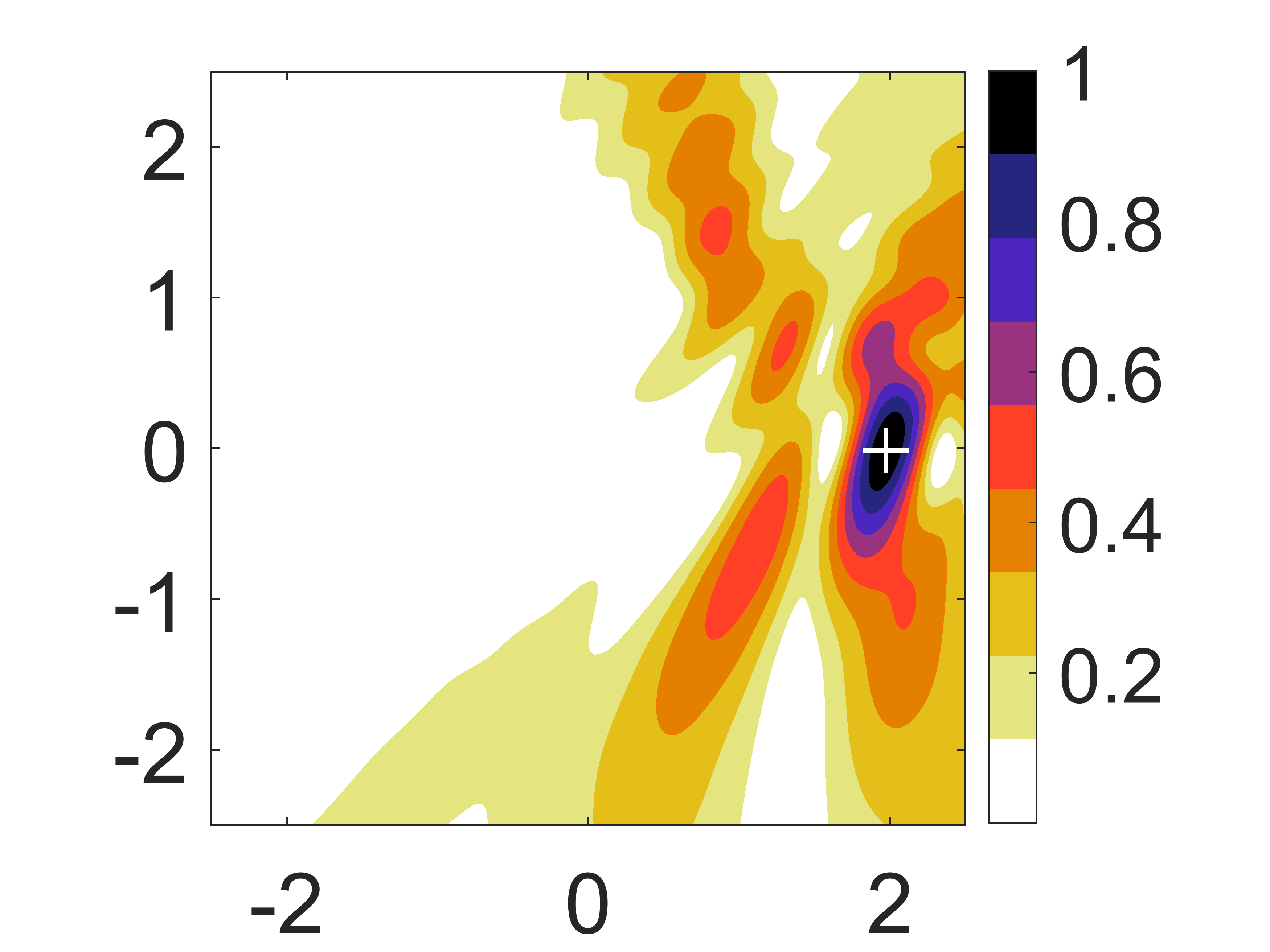}}
				\subfigure[]{\includegraphics[width=0.24\linewidth]{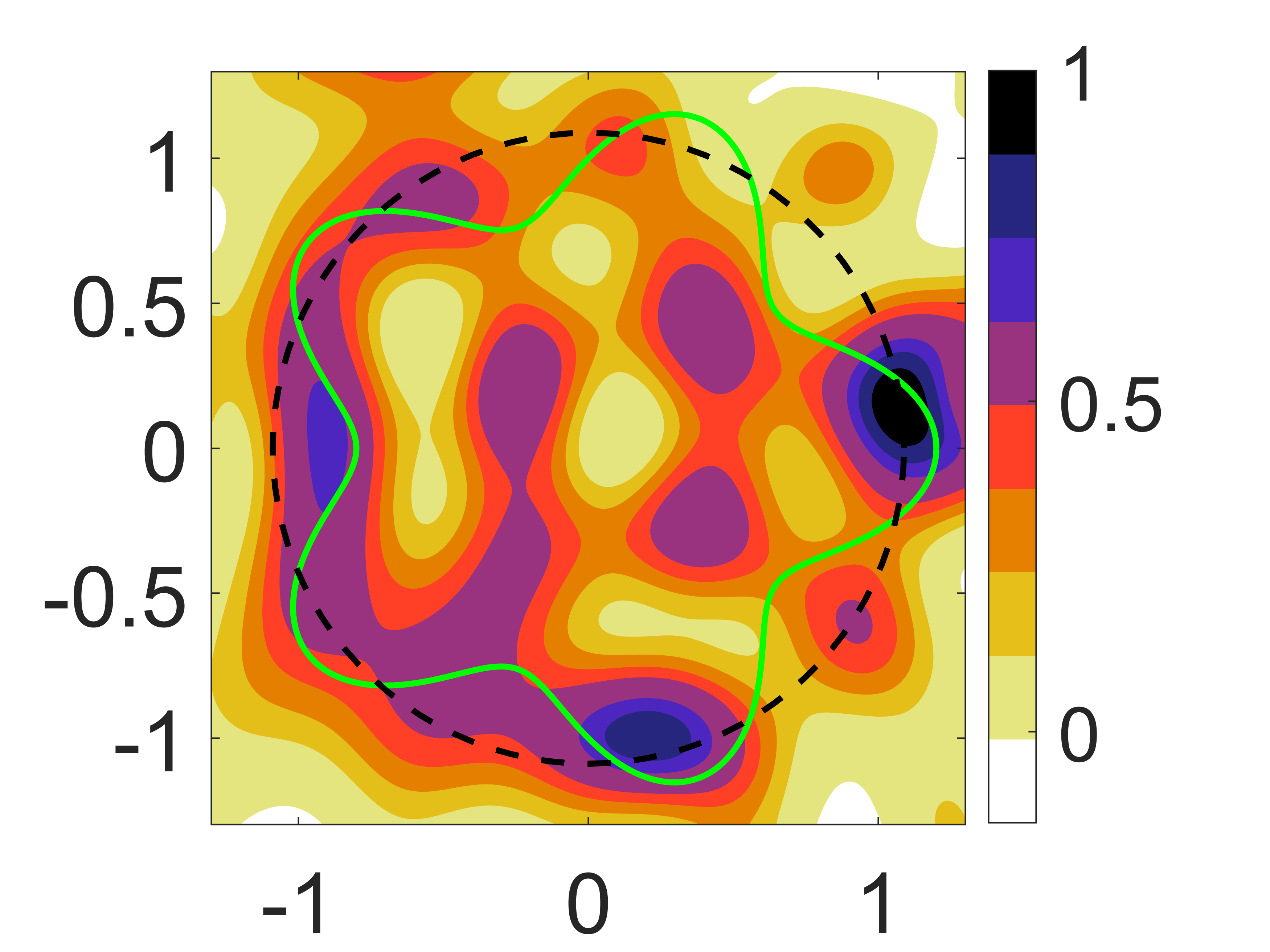}}
				\subfigure[]{\includegraphics[width=0.24\linewidth]{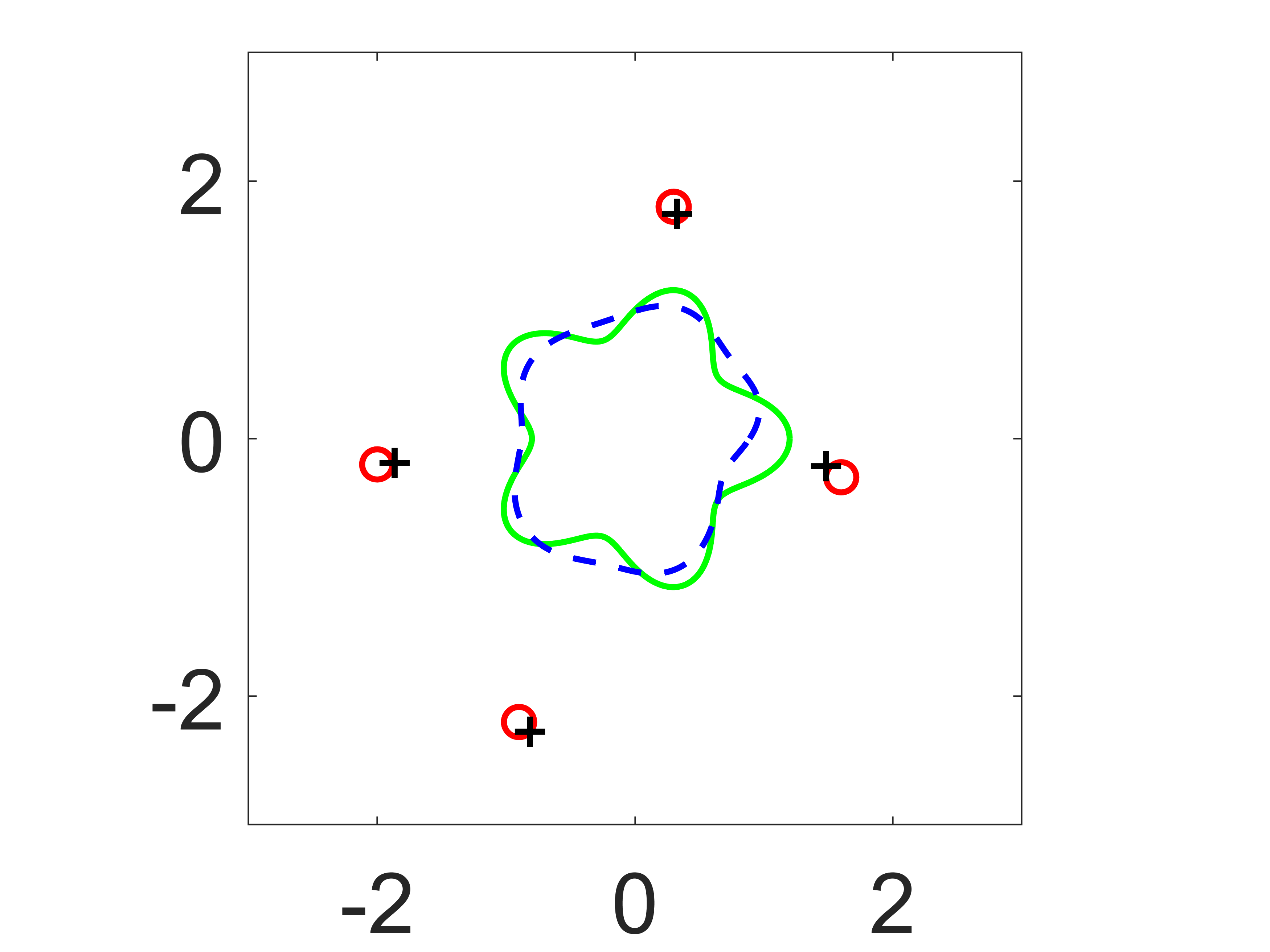}}
				
				\subfigure[]{\includegraphics[width=0.24\linewidth]{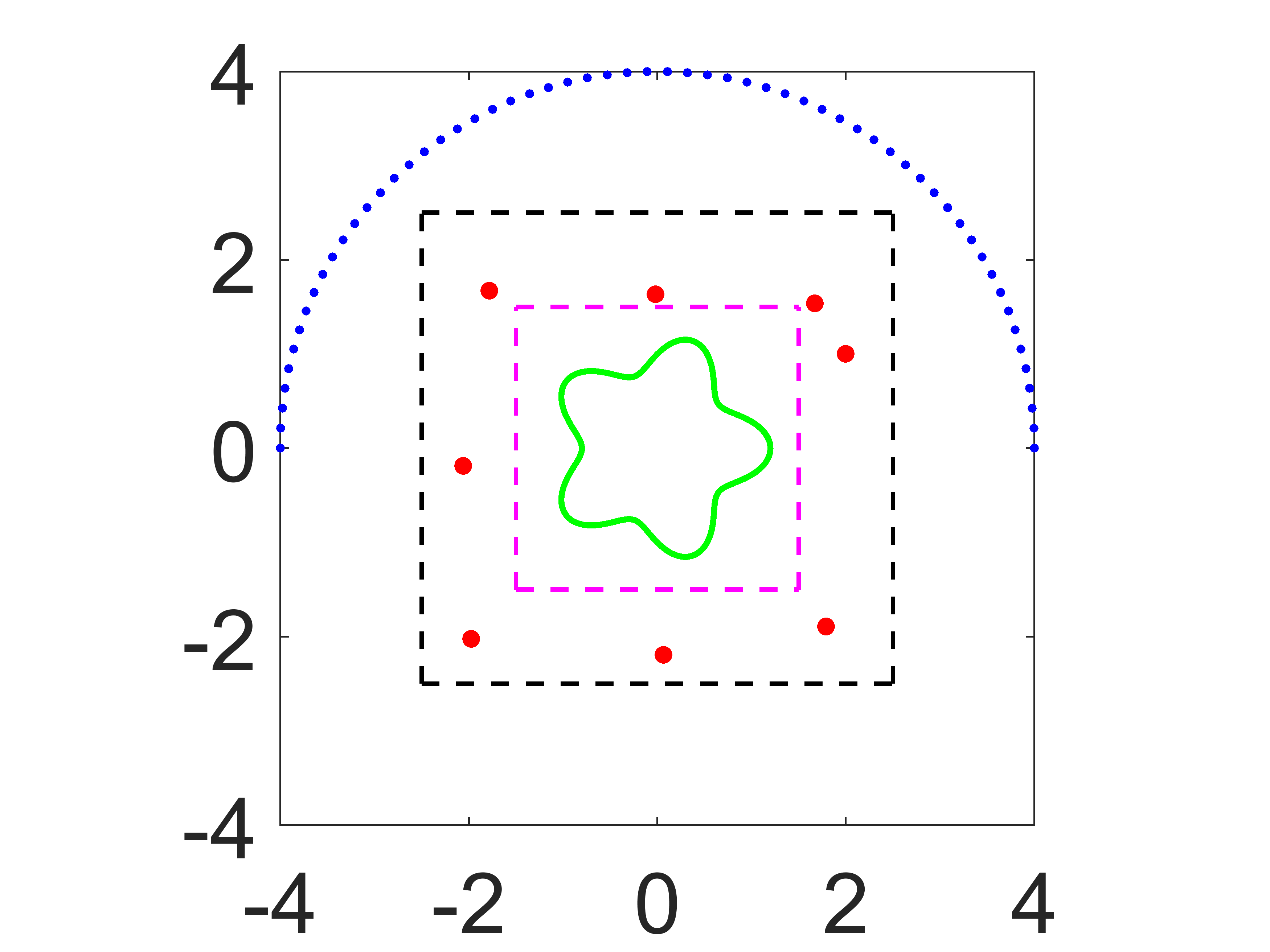}}
				\subfigure[]{\includegraphics[width=0.24\linewidth]{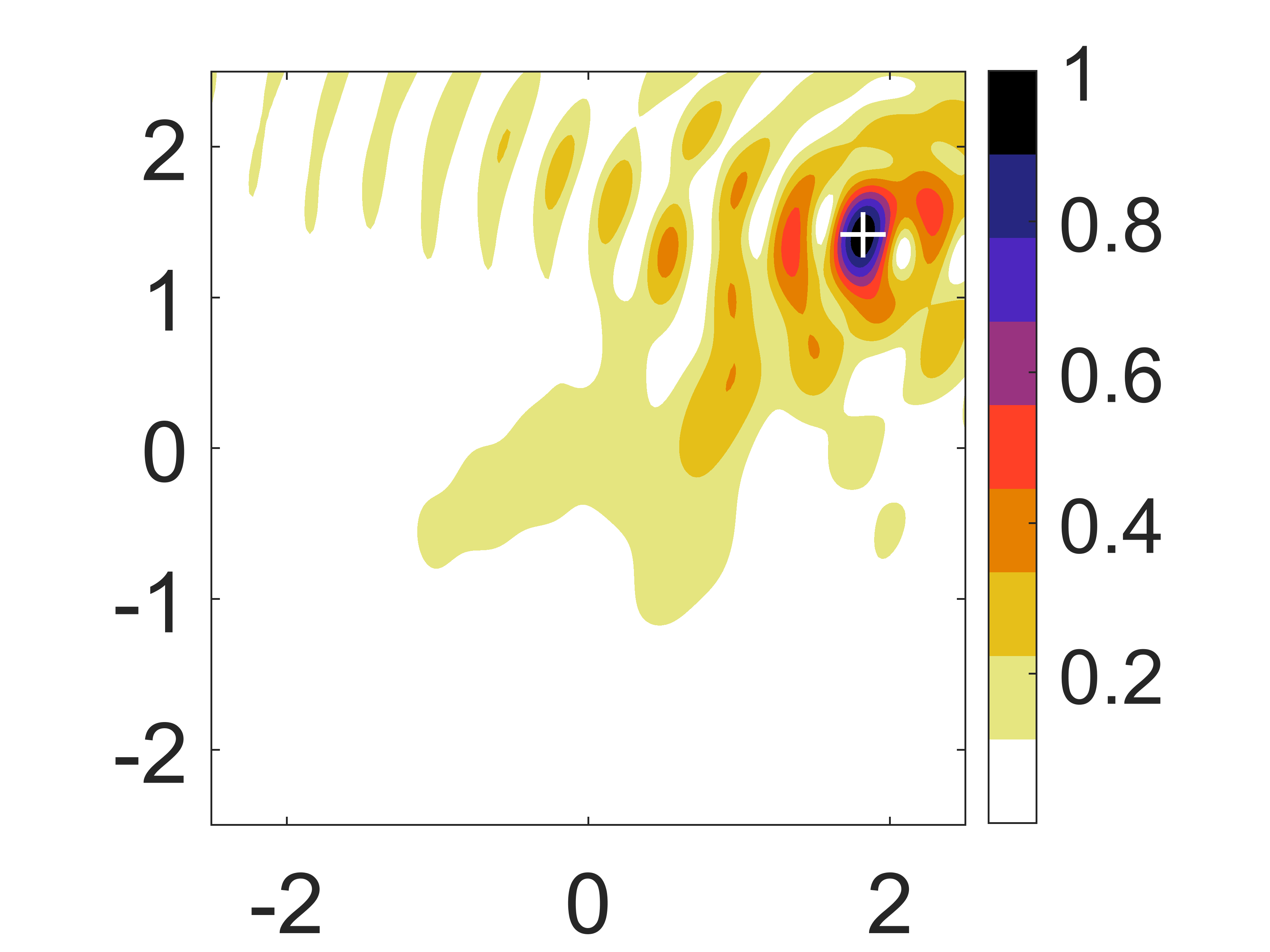}}
				\subfigure[]{\includegraphics[width=0.24\linewidth]{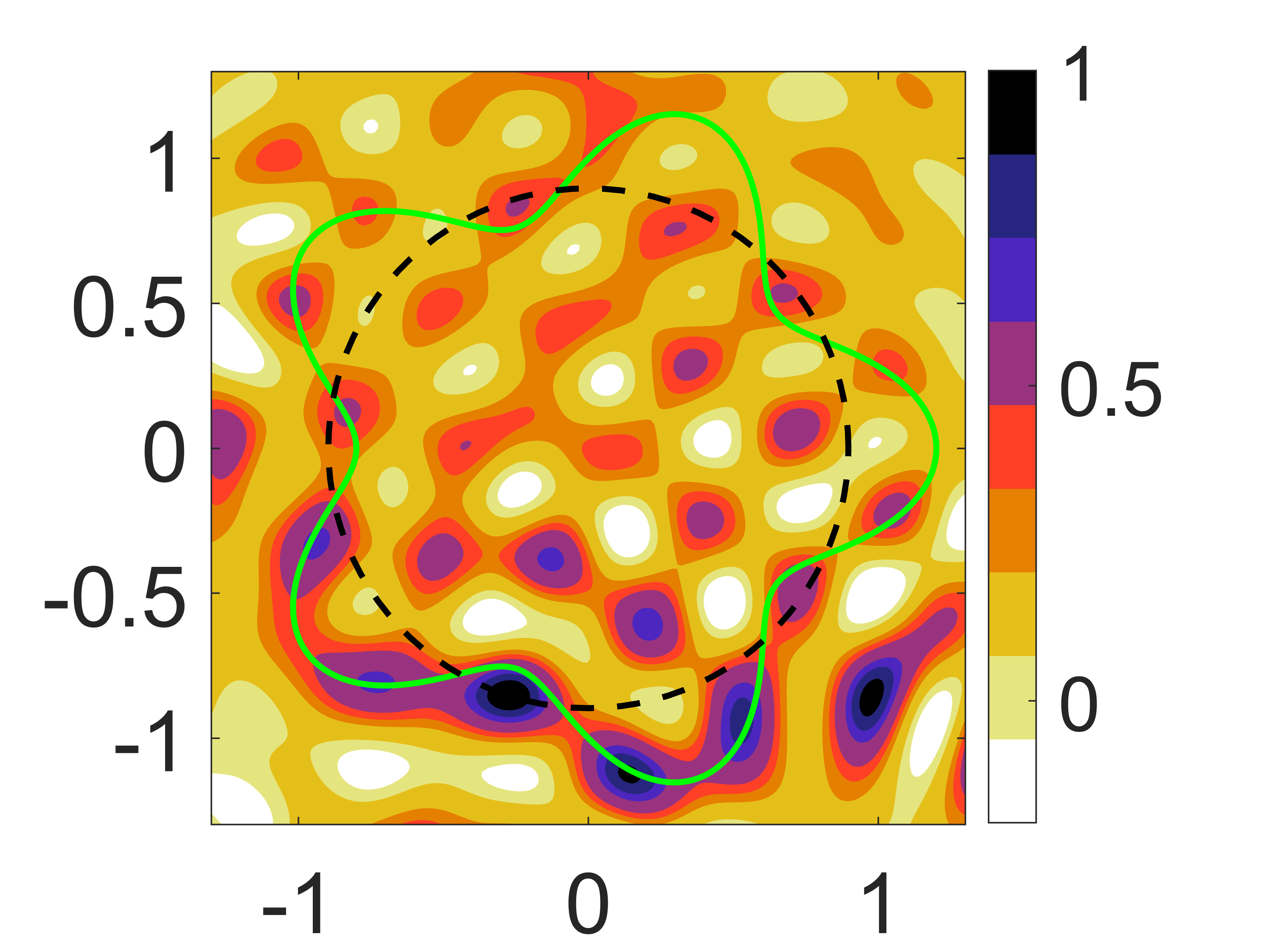}}
				\subfigure[]{\includegraphics[width=0.24\linewidth]{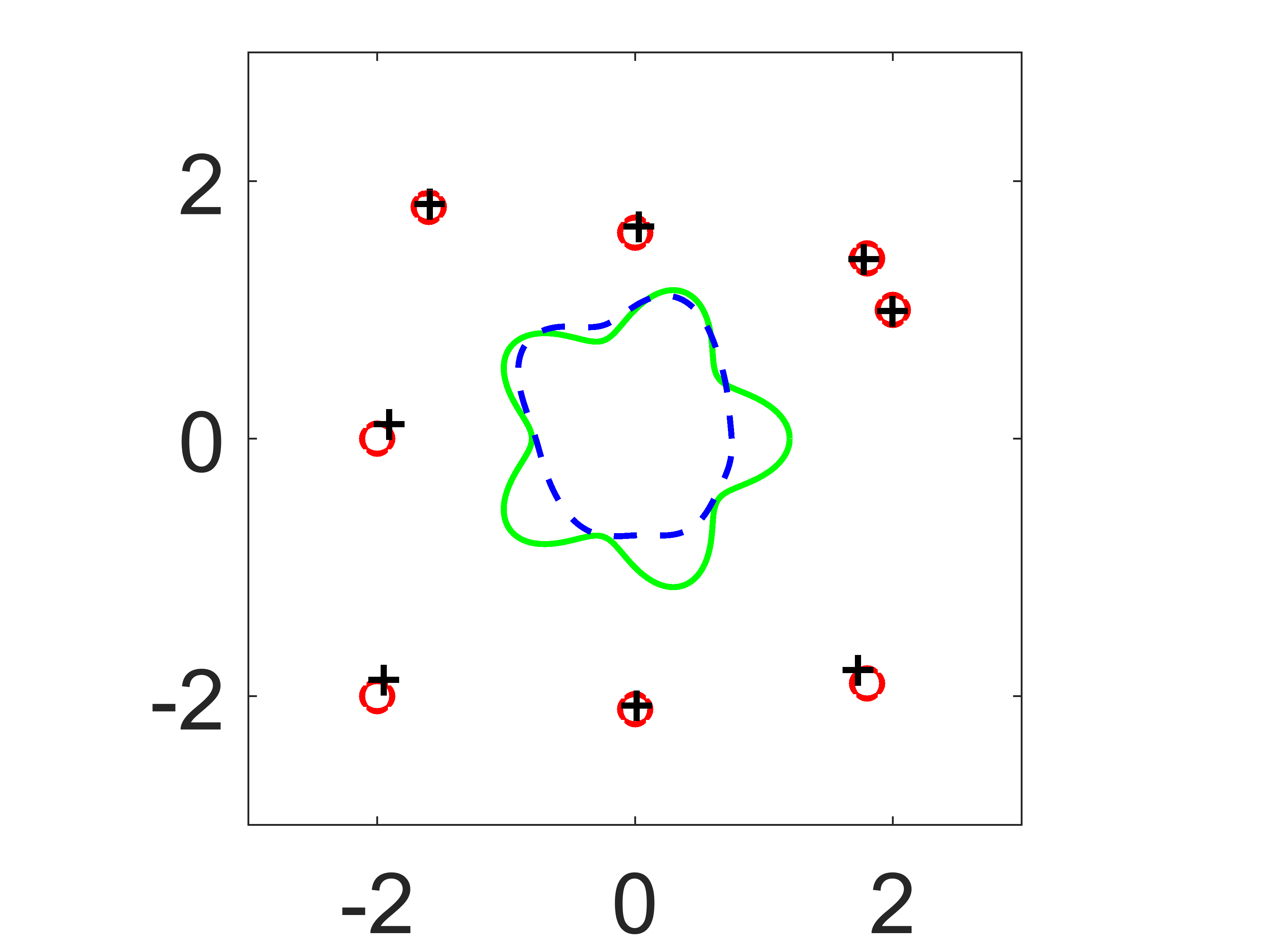}}
				\caption{Reconstructions of the starfish and different number of source points from limited aperture data. (The 1st column: problem geometry; the 2nd column: exact and reconstructed locations of the source points with DSM; the 3rd column: image of $I_D(y)$; the 4th column: reconstruction by the optimization method. The 1st row: $N=4,\theta=3\pi/2$; the 2nd row: $N=8, \theta=3\pi/2$; the 3rd row: $N=4, \theta=\pi$; the 4th row: $N=8, \theta=\pi$.)}\label{fig:limited}
			\end{figure}
		\end{example}

		\section{Conclusion}\label{sec:conclusions}
		In this work, we developed a novel optimization method to simultaneously reconstruct a sound-soft obstacle and multiple source points from near field scattering data. To obtain a good initial guess for the optimization method, a two-step sampling method is also developed to respectively image the rough source points and obstacle.  Theoretically, we analyze the convergence of the optimization method and the indicating behaviors of the sampling scheme. Numerical results are provided to demonstrate the feasibility and promising features of the proposed method. Our future work consists of the theoretical justification of the uniqueness issue, the extension to three-dimensional model and the other boundary conditions, as well as the applicability in the scenarios of electromagnetic or elastic waves.
		
		\section*{Acknowledgments}
		This work was supported by the NSFC grant under NO. 11971133 and the Fundamental Research Funds for the Central Universities. The authors would also like to thank Professor Xianchao Wang for valuable discussions.
		

	\end{document}